\documentclass[11pt,reqno,tbtags]{amsart}

\usepackage{amsthm,amssymb,amsfonts,amsmath}
\usepackage{amscd} 
\usepackage[numbers, sort&compress]{natbib} 
\usepackage{subfigure, graphicx}
\usepackage{vmargin}
\usepackage{setspace}
\usepackage{paralist}
\usepackage[pagebackref=true]{hyperref}
\usepackage[usenames,dvipsnames]{color}
\usepackage[normalem]{ulem}
\usepackage{stmaryrd} 
\usepackage[refpage,noprefix,intoc]{nomencl} 
\usepackage{cleveref}

\providecommand{\N}{}

\renewcommand{\N}{{\mathbb N}}

\newcommand{\E}[1]{{\mathbf E}\left[#1\right]}				
\newcommand{\e}{{\mathbf E}}

\newcommand{\p}[1]{{\mathbf P}\left\{#1\right\}}

\newcommand{\I}[1]{{\mathbf 1}_{[#1]}}

\newcommand\cA{\mathcal A}
\newcommand\cB{\mathcal B}
\newcommand\cC{\mathcal C}
\newcommand\cD{\mathcal D}

\newcommand\cF{\mathcal F}
\newcommand\cG{\mathcal G}
\newcommand\cH{\mathcal H}

\newcommand\cJ{\mathcal J}

\newcommand\cM{\mathcal M}

\newcommand\cS{{\mathcal S}}

\newcommand\cX{{\mathcal X}}

\providecommand{\eps}{}
\renewcommand{\eps}{\epsilon}
\providecommand{\ora}[1]{}
\renewcommand{\ora}[1]{\overrightarrow{#1}}
\DeclareRobustCommand{\SkipTocEntry}[5]{} 

\reversemarginpar
\definecolor{clou}{rgb}{0.8,0.25,0.5125}

\newtheorem{thm}{Theorem}
\newtheorem{lem}[thm]{Lemma}
\newtheorem{prop}[thm]{Proposition}
\newtheorem{obs}[thm]{Observation}
\newtheorem{cor}[thm]{Corollary}

\newtheorem{conj}{Conjecture}

\newtheorem{claim}[thm]{Claim}

\newenvironment{claimproof}{\begin{proof}[Proof of Claim]}{\end{proof}}

\numberwithin{equation}{section}
\numberwithin{thm}{section}

\makenomenclature										
\graphicspath{ {./images/} }
\usepackage{wrapfig}

\newcommand{\cal}{\mathcal}
\newcommand{\ex}{\mathrm{ex}}
\newcommand{\out}{\mathrm{out}}
\newcommand{\cond}{\mathrm{cond}}
\newcommand{\D}{\mathcal D}
\newcommand{\mix}{\mathrm{mix}}
\newcommand{\diam}{\mathrm{diam}}
\newcommand{\cyc}{\mathrm{cyc}}
\newcommand{\Cyc}{\mathrm{Cyc}}
\newcommand{\ones}{a}

\usepackage{cleveref}

\usepackage{thmtools}
\newtheorem{innercustomthm}{Theorem}
\newenvironment{customthm}[1]
  {\renewcommand\theinnercustomthm{#1}\innercustomthm}
  {\endinnercustomthm}

\begin{document}

\title[Mixing for giant component of random graphs with given degrees]{Diameters and mixing times for giant components of random graphs with given degrees} 
\author{Louigi Addario-Berry}
\address{Department of Mathematics and Statistics, McGill University, Montr\'eal, Canada}
\email{louigi.addario@mcgill.ca}

\author{Bruce Reed}
\address{Institute of  Mathematics, Academia Sinica, Taipei, Taiwan}
\email{bruce.al.reed@gmail.com}

\author{Corrine Yap}
\address{School of Mathematics, Georgia Institute of Technology, Atlanta, GA, USA}
\email{math@corrineyap.com}

\date{May 15, 2026} 

\subjclass{05C80, 60C05}

\begin{abstract} 
A sequence ${\mathcal D}=\{d_1,...d_n\}$ is a {\it feasible degree sequence} 
if there is a  graph on $\{1,...,n\}$ such that $i$ has degree $d_i$.
For such a sequence, $G({\mathcal D})$ is a graph chosen uniformly at random from those with the given degree sequence. 
We consider sequences $\{\cD_\ell\}_{\ell \ge 1}$ of feasible degree sequences which have a giant component. We show that with high probability this giant component is  unique, and bound its diameter and the mixing time of the random walk on it. We also bound the size and diameter of the other components and show that many of these bounds are tight.
\end{abstract}

\maketitle



\tableofcontents

\section{Introduction}\label{sec:intro}

A sequence ${\mathcal D}=(d_1,\ldots,d_n)$ where $d_1 \leq d_2 \leq \cdots \leq d_n$ is a {\it feasible degree sequence} 
if there is a 
(simple) graph on $[n]:= \{1,...,n\}$ such that $i$ has degree $d_i$.
Given a feasible degree sequence ${\mathcal D}$, define $\cG(\cD)$ to be the set of all (simple) graphs on $[n]$ with degree sequence $\cD$. Let $G=G({\mathcal D})$ be a uniformly random element of $\cG(\cD)$, and let $m=m(G) =m(\cD)=\frac{\sum_{i=1}^n d_i}{2}$ be the number of edges of $G$. 

Fix a sequence $\{\cD_{\ell}\}_{\ell \geq 1}$ of feasible degree 
sequences, where ${\cal D}_{\ell}=(d_{\ell,1},\ldots,d_{\ell,n(\ell))})$ and $n(\ell) \to \infty$ as $\ell \to \infty$. We say $\{\cD_{\ell}\}_{\ell\geq 1}$ {\em has a giant component} if there exists $\epsilon>0$ such that the probability that $G({\cal D}_{\ell})$ has a component containing at least $\epsilon n(\ell)$ vertices goes to 1 as~$\ell$ goes to infinity. We will sometimes say ``$G(\cD_{\ell})$ has a giant component'' to mean that $\{\cD_{\ell}\}_{\ell\geq 1}$ has a giant component. 

We shall use the phrase ``with high probability'' or ``whp'' to mean ``with probability tending to $1$ as some other parameter goes to infinity.''
If the parameter is unspecified it is the index $\ell$ of the degree 
sequence in a sequence of degree sequences.

In this work, we show that if $G(\cD_{\ell})$ has a giant component, then with high probability it has only one component of order much larger than $n(\ell)^{1/3}$ and bound the diameter and the asymptotics of the mixing time of the lazy random walk on all of  its components.

Before describing this walk and discussing our results, we discuss some properties that a sequence of degree sequences which has a giant component 
must satisfy. For a degree sequence ${\cal D}=(d_1,\ldots,d_n)$, and integer $d \ge 0$, it is useful to write $n_d=n_d(\cD)=|\{i \in [n]: d_{i}=d\}|$ for the number of vertices with degree $d$ in $G(\cD)$, and also to write $n(\cD)=n$.

We first consider vertices of degree zero. If there is no $\alpha>0$ such that the proportion  of vertices of degree 
zero is at most  $1-\alpha $ for large enough $\ell$, then $G(\cD_{\ell})$ has no giant component. Otherwise, $\{\cD_{\ell}\}_{\ell \ge 1}$ has a giant component precisely if the sequence of degree sequences obtained by deleting the  zeroes in every degree sequence does. So we assume $n_0=0$ for all degree sequences we consider.

We next consider the vertices of degree 2. For any degree sequence ${\cal D}$, the graph
$G(\cD)$ consists of the union of some cycle components and some components which are obtained by replacing with paths the edges of some multigraph $G^{\neq 2}$, all of whose vertices have degree which is not two. The multigraph $G^{\ne 2}$ has $m^{\ne 2}=m^{\neq 2}(\cD):=m-n_2(\cD)$ edges. Moreover, conditioned on the set $S$ of vertices in cycle components of $G(\cD)$, the subgraph $G(\cD)|_S$ induced by $S$ is a uniformly random $2$-regular graph with vertex set $S$.

If the union of the cycle components of a graph $G$ with $n$ vertices contains more than  $(1-\epsilon)n$ vertices then $G$ has a  component of order $\epsilon n$ if and only if this union does.  Using this, it was shown by Joos {\em et al} \cite[Theorem 6]{joos2018determine} that if, for some constant $B$, there are infinitely many $\ell$ for which $m^{\neq 2}(\cD_{\ell}) \le B$, then  $\{\cD_{\ell}\}_{\ell\geq1}$ has no giant component. So, in order for there to be a giant component, we must have $m^{\neq 2}(\cD_{\ell})=\omega(1)$, in which case, as we shall show below, with probability $1-o(1)$ the cycle components  contain $o(n(\cD_{\ell}))$ vertices.

For a graph $H=(V,E)$, the {\em lazy random walk on $H$}, when at vertex $u$, stays at $u$ with probability $\frac{1}{2}$ and moves to each neighbour of $u$ with probability $\frac{1}{2\deg(u)}$, where $\deg(u)=\deg_H(u)$ is the degree of $u$ in $H$. If $H$ is connected,
there is a unique stationary distribution $\pi$, meaning if we choose a vertex according to $\pi$ and perform a step of the walk, the resulting distribution is still $\pi$. 
As can be easily verified, this distribution satisfies 
$\pi(u) =\frac{\deg(u)}{2|E|}$. 

Let $P$ be the transition matrix of the lazy walk, so $P_{i,j}$ is the probability we move to $i$ if we are at $j$. 
If we start our walk at time $t=0$ from vertex $i$, then our starting probability distribution 
is $\mu^{0,i}=(\mu^{0,i}_v)_{v \in V}$ with $\mu^{0,i}_i=1$ and $\mu^{0,i}_v=0$ for $v \ne i$. For $t \ge 1$,  $\mu^{t,i}=\mu^{0,i} P^t$  is the probability distribution of the endpoint of a $t$-step lazy walk that started at $i$. In a finite connected graph, for every $i$, $\mu^{t,i}_i$ converges to $\pi$ 
in the following sense. The {\em total variation distance} $d_T(P,P')$ between two probability distributions
$P$ and $P'$ on a state space $\Omega$ is defined as $\max_{A \subseteq \Omega}|P(A)-P'(A)|$. We have that for all $i$,
the total variation distance of $\mu^{t,i}$ and $\pi$ tends to zero as $t$ goes to infinity. 
This is not true for a non-lazy walk which simply chooses a uniform random neighbour; for instance, if the graph is regular and bipartite, then $\pi$ is the uniform distribution 
and hence for each side $A$ of the bipartition and every time $t$ and starting vertex $i$,
$|\mu^{t,i}(A)-\pi(A)|\ge\frac{1}{2}$.  It is for this reason that we focus on the lazy random walk.

The {\em mixing time} of the lazy random walk on $H=(V,E)$, which measures how quickly the walk converges to $\pi$ from the 
worst starting point, is defined as 
\[
\tau_{\mix}=\tau_{\mix}(H) := \sup_{i\in V}\min\left\{t\ \left| \ 
d_T(P^t_i,\pi) < \frac{1}{e}\right.\right\}.
\]
From here on, we write ``the mixing time of $H$'' to mean $\tau_{\mix}(H)$. The graph on $n$ vertices with the smallest mixing time is a clique, which has mixing time~$2$ for $n \ge 3$. 
One connected graph with asymptotically the largest mixing time is the ``barbell graph'' 
obtained from the union of two cliques with $\frac{n}{4}$
vertices each 
and a path with $\frac{n}{2}$ vertices by adding an edge from one vertex of each clique to distinct 
endpoints of the path   (call these endpoints the start and the end of the path). For this graph, if the random walk starts in one clique, then until the first time it has probability at least $\frac18$ of having reached the other clique, the total variation distance is at least $\frac12-\frac18-o(1) >\frac1e$ for $n$ large, so the random walk is not yet mixed. Standard random walk theory tells us that if we begin a random walk at the start of the path, the probability that it exits  the path from the end ---into the other clique---is $\frac{2}{n+1}$. On the other hand, if we exit at the start of the path, then it takes on average $\frac{n}{4}$ visits to the unique clique vertex incident to the start of the path before we revisit the start of the path, and the average time between such visits is also $\frac{n}{4}$; so with high probability it takes $\Omega(n^2)$ time to return to the start of the path. It follows that, starting from one clique, with high probability it takes time $\Omega(n^3)$ to reach the other clique, and so the mixing time is $\Omega(n^3)$.

The distance between two vertices $x, y$ of a graph, denoted $\mathrm{dist}(x,y)$, is the number of edges on a shortest path between them. The {\em diameter} of a graph $H$, denoted $diam(H)$, is the maximum  of the distance  between two of its vertices.
It is not hard to see that the mixing time is at least half the diameter, and it is often large diameter graphs which have high mixing times, as in the example just given. However, graphs with small diameters can also have large mixing time. For example, a  graph with diameter three  and high mixing time is obtained from  the disjoint union of two 
cliques of order $n/2$ by adding an edge between them. Its mixing time  is essentially the time until with probability 
at least $\frac{1}{2}-\frac{1}{e}$ we move from one clique to the other which is   $\Omega(n^2)$. 

If ${\cal D}=(d_1,\ldots,d_n)$ is the constant sequence $(n-1, \cdots, n-1)$ then 
$G({\cal D})$ is a clique, so we can achieve the 
quickest mixing time with a random graph on a fixed degree sequence. 
However, our results imply  that typical graphs with a given degree sequence $(d_1,\ldots,d_n)$ cannot have mixing time as large as $n^3$.
In the coming theorem statements, we write $n=n(\cD_{\ell})$, $m=m(\cD_{\ell})$, and $m^{\ne 2}=m^{\ne 2}(\cD_{\ell})$. 

\begin{thm}\label{thm:mainnew1} For any $h$ going to infinity with $n$, the following holds. Let $(\cD_{\ell})_{\ell \ge 1}$ be a sequence of feasible degree sequences such that  $n_0(\cD_{\ell})=0$ and $G(\cD_{\ell})$  has a giant component. Then whp  
    the diameter of every  component  is $O(\frac{m \log m^{\neq 2}}{m^{\neq 2}})$  and all  but the largest contain at most one cycle and  have $O(\max\{n^{1/3}h(n), \frac{m \log m^{\neq 2}}{m^{\neq 2}}\})$ vertices.  
\end{thm}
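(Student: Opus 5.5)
We reduce to the kernel multigraph $G^{\neq 2}$ and then account for the subdivision by degree-2 vertices. Condition on the set $S$ of vertices in cycle components. By the discussion above, $G(\cD)|_S$ is a uniform $2$-regular graph, and the rest of the graph is obtained from a random multigraph on the vertices of degree $\neq 2$ by subdividing edges with paths.

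\emph{Cycle components.} First we show, via a switching argument, that whp the cycle components contain $o(n)$ vertices. Each cycle component is unicyclic, so it is covered by the "at most one cycle" part of the statement. A standard estimate for cycle lengths in a random $2$-regular graph, combined with the fact that each degree-2 vertex lies on a cycle component with probability roughly $O(1/m^{\neq 2})$ per unit length, should show that each cycle has length $O(\frac{m\log m^{\neq 2}}{m^{\neq 2}})$. This controls both the diameter and the order of these components.

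\emph{Subdivided paths.} For the remaining components, we generate the graph in two stages. We first expose $G^{\neq 2}$, a multigraph with $m^{\neq 2}$ edges. We then distribute the $n_2$ degree-2 vertices among its edges; this is essentially a uniform allocation of $n_2$ balls into $m^{\neq 2}$ ordered boxes. The key step is a balls-in-boxes tail bound: whp every edge of $G^{\neq 2}$ is subdivided into a path of length $O(\frac{m}{m^{\neq 2}}\log m^{\neq 2})$, since the maximum gap in a uniform composition of $n_2$ into $m^{\neq 2}$ parts is $O(\frac{n_2}{m^{\neq 2}}\log m^{\neq 2})$.

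\emph{Kernel diameters and uniqueness.} Next we show that whp every component of $G^{\neq 2}$ has diameter $O(\log m^{\neq 2})$ and that all non-giant kernel components are trees or unicyclic. For this we use the standard exploration (breadth-first search) of the configuration model, noting that $G(\cD)$ restricted to the kernel is contiguous with it (or we work directly with switchings). Since $G$ has a giant component, the kernel is supercritical. The tree-excess bound for small components follows from the usual estimate that the expected number of small complex components is $o(1)$. The diameter bound is the main obstacle. Degrees can be arbitrary, and near-critical regimes (many degree-1 and degree-3 vertices with small excess) can make both the giant's diameter and small component sizes large. The intended route is the following. Bound the depth of BFS trees of small components by a subcritical branching comparison, which gives depth $O(\log m^{\neq 2})$ with a union bound. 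For the giant, show that any two vertices whose explorations reach size $\Theta(\log m^{\neq 2})$ within $O(\log m^{\neq 2})$ steps are joined whp, using a sprinkling argument on unmatched half-edges.

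\emph{Component sizes and assembly.} To bound the sizes of non-largest components, we split into two cases. Components whose kernel part has $O(1)$ size have order at most their path lengths, which is $O(\frac{m\log m^{\neq 2}}{m^{\neq 2}})$. Otherwise we use a critical-window type estimate: the exploration process is a random walk with nonpositive drift after the giant is removed. Its excursions above the minimum have size $O(n^{1/3}h(n))$ whp, via a Doob maximal-inequality bound on the variance, which is at most $O(n)$ per $n^{2/3}$ steps. Finally, we combine the diameter bounds. A kernel path of $O(\log m^{\neq 2})$ edges, each subdivided into $O(m/m^{\neq 2})$ vertices on average, gives total length $O(\frac{m\log m^{\neq 2}}{m^{\neq 2}})$. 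This uses a concentration argument on sums of the lengths along a fixed path, rather than the crude product of maxima, which would lose an extra $\log$ factor.
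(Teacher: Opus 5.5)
\textbf{There is a genuine gap, and it is exactly where the $n^{1/3}h(n)$ term comes from.} That term is not a critical-window phenomenon, and your critical-window argument does not hold.

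In the supercritical regime the non-giant part is subcritical, not critical. At the level of $J$, low-excess trees have exponentially decaying counts (Lemma~\ref{countsmallexcess}). When degrees are unbounded, the exploration increments have variance of order $\sum_i d_i^3/m$, so your Doob/variance estimate (``$O(n)$ per $n^{2/3}$ steps'') has no basis.

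The real obstruction is a single vertex of degree about $n^{1/3}$ that fails to attach to the vertices of very large degree. In the example in the introduction, with probability at least $1-\frac1a$ some vertex of degree $\ell^{1/3}/a$ is the centre of a star component.

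Your case split is also inconsistent. Suppose $m/m^{\neq 2}\gg n^{1/3}h(n)$. Then a non-giant component of $J$ with $\Theta(\log m^{\neq 2})$ edges has order about $\frac{m\log m^{\neq 2}}{m^{\neq 2}}$ in $G$. Such components typically exist, e.g.\ when $J$ has many vertices of degree one. This component has neither a kernel part of size $O(1)$ nor order $O(n^{1/3}h(n))$. Your first case must therefore cover every component with $O(\log m^{\neq 2})$ edges of $J$. That needs a tail bound on low-excess subgraphs of $J$ (Lemma~\ref{countsmallexcess}, Corollary~\ref{nosmallexcess}), followed by summing path lengths (Proposition~\ref{saulgoodman}).

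What is missing for the $n^{1/3}$ part is the split according to whether $\cD$ has a $\mu$-center:
\begin{itemize}
\item Without a center there is no $n^{1/3}$ term at all (Theorem~\ref{thm:mainnew5}).
\item With a center, switchings show that all vertices of degree at least $\lambda\sqrt m$ lie within any slowly growing distance of vertex $n$ (Lemma~\ref{lastlem}).
\item If in addition these vertices carry a constant fraction of the edges, every vertex of degree at least $n^{1/3}h(n)$ lies within distance $6$ of them (Lemma~\ref{notlastlem}). So components avoiding them contain only vertices of smaller degree and $O(\log m)$ vertices of degree at least two.
\item If they carry few edges, one deletes them and checks that the residual degree sequence still satisfies the hypotheses of Theorem~\ref{thm:mainnew5}.
\end{itemize}

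\textbf{Your diameter and uniqueness steps rely on comparison with the configuration model, which is not available here.} Transferring configuration-model estimates requires $\sum_i d_i^2=O(n)$. Sequences without a center can have $\sum_i d_i^2\gg m$. With a center, where one vertex carries a constant fraction of the edges, no branching-process comparison is possible at all.

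\begin{itemize}
\item The positive drift of the exploration has to be proved by switchings (Lemma~\ref{nocenterhelpslem}), and that proof uses the no-center hypothesis essentially.
\item Ruling out small complex components near high-degree vertices also needs its own switching argument: stopping rule (5) in Case~2 of the paper's proof.
\item Your sprinkling step is too weak as stated. Two explorations of size $\Theta(\log m^{\neq 2})$ are not whp joined. You must grow them past $\sqrt{m^{\neq 2}}$, or prove expansion of large kernel sets as in Lemma~\ref{kernelbound}.
\item The degree-two vertices are not allocated uniformly, because simplicity forbids parallel red edges and red or yellow loops. This is why the paper conditions on the coloured reduction.
\end{itemize}

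On the positive side, your treatment of cycle components is in line with Proposition~\ref{fewcycles}. Once distances in $J$ are known to be $O(\log m^{\neq 2})$, your final step is a sound alternative to the paper's conductance route through Proposition~\ref{conductanceboundnew}. That step is: fix a geodesic of $J$, bound the sum of its path lengths with Proposition~\ref{ballsinbins}, and take a union bound over pairs. The conductance route has the advantage that it also yields the mixing-time results.
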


\begin{thm}\label{thm:mainnew2} For any $h$ going to infinity with $m$, the following holds. Let $(\cD_{\ell})_{\ell \ge 1}$ be a sequence of feasible degree sequences such that $n_0(\cD_{\ell})=0$,  $G(\cD_{\ell})$  has a giant component, and  $m \ge h(m)m^{\neq 2}$. Then  whp the mixing time  of every component is  $O((\frac{m\log m^{\neq 2}}{m^{\neq 2}})^2)$. 
\end{thm}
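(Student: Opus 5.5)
We reduce the mixing-time bound to the structural information in Theorem~\ref{thm:mainnew1}, together with an expansion property of the kernel $G^{\neq 2}$. Write $L=\frac{m}{m^{\neq 2}}$; this is roughly the average number of edges of $G$ per edge of $G^{\neq 2}$, that is, the typical length of a subdivided path. The hypothesis $m\ge h(m)m^{\neq 2}$ says that $L\to\infty$, so the graph is dominated by long paths of degree-$2$ vertices.

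\textbf{Small components.} By Theorem~\ref{thm:mainnew1}, whp every component other than the largest is a tree or unicyclic, with diameter $D=O(L\log m^{\neq 2})$. For a unicyclic graph or tree of diameter $D$, we bound the lazy-walk mixing time by $O(D^2)$ via a commute-time/hitting-time argument. The key point is that the components must be \emph{thin} (few high-degree branchings) rather than merely of small diameter. For a tree, $\tau_{\mix}\lesssim \max_{x,y} \mathbf{E}_x[T_y] \le 2|E|\cdot \mathrm{dist}(x,y)$, which is too weak if $|E|\gg D$. So instead we use the standard bound $\tau_{\mix}=O(t_{\mathrm{hit}}$ of a central vertex$)$. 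Since whp these components come from subdividing kernel trees with $O(\log m^{\neq 2})$ kernel edges of total subdivided length $O(D)$, the number of edges is $O(D)$ up to the degree-$1$ and high-degree kernel vertices. We then obtain $O(D^2)$. For components that are single cycles of length $k$, the bound $O(k^2)$ is classical, and $k=O(L\log m^{\neq 2})$ whp by a first-moment computation on cycle lengths.

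\textbf{Giant component.} We work with the kernel $K$, the giant of $G^{\neq 2}$ after stripping pendant trees, which is a random multigraph with given degrees. The plan is to show that whp the conductance profile of the giant $C$ satisfies the following. For every set $S$ with $\pi(S)\le 1/2$, $\Phi(S)\gtrsim \frac{1}{L\log m^{\neq 2}}$ when $\pi(S)$ is large, with a profile suitable for the Fountoulakis--Reed or Lov\'asz--Kannan average-conductance bound $\tau_{\mix}=O\bigl(\int_{\pi_{\min}}^{1/2}\frac{dx}{x\Phi(x)^2}\bigr)$. Concretely:
\begin{enumerate}
\item Decompose $C$ into the $2$-core's kernel, the subdivided paths, and pendant trees.
\item Use the (assumed) expansion of the random kernel, i.e.\ edge expansion of order $1$ for kernel sets weighted by degree, which holds whp for random multigraphs with minimum degree $3$ in the core.
\item Lift this expansion to $C$. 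A set $S$ of edge-mass $\mu$ crossing a subdivided path of length $\ell$ loses a factor $\ell$, and the paths have lengths that are roughly geometric with mean $L$. So paths longer than $CL\log m^{\neq 2}$ do not occur whp, and sets with small conductance must contain whole long paths.
\end{enumerate}
Plugging in gives $\Phi(x)\gtrsim \frac{1}{L}\min\{1,\frac{\log(1/x)}{\log m^{\neq 2}}\}$-type bounds. The integral then yields $O(L^2\log^2 m^{\neq 2})$. Pendant trees, of depth $O(L\log m^{\neq 2})$ by Theorem~\ref{thm:mainnew1}, are handled as in the small-component case by a hitting-time argument to the core.

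\textbf{Main obstacle.} The hard step is the lifting in item~3: controlling sets of small stationary mass, for example a few consecutive long paths around a low-degree part of the kernel, where the conductance degrades. The requirement is that the average-conductance integral, not merely the worst conductance, is $O(L^2\log^2 m^{\neq 2})$. A cruder worst-case Cheeger bound would give $L^2\log^2 m^{\neq 2}\cdot\log(1/\pi_{\min})$, losing a $\log m$ factor. To avoid this loss we would first try a union bound over connected kernel subsets of size $s$, showing whp that each has $\Omega(s)$ boundary kernel edges. When $L\to\infty$ this tolerates the weight of the paths, and this is exactly where the hypothesis $m\ge h(m)m^{\neq 2}$ is used.
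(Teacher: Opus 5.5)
Your frame (a scale-by-scale conductance bound for connected sets fed into Theorem~\ref{conductancebound}, with kernel expansion and path lengths of mean $L=m/m^{\neq 2}$) is the paper's: Theorem~\ref{thm:mainnew2} is deduced from Theorem~\ref{thm:mainnew5}, since $m\ge h(m)m^{\neq 2}$ excludes a $\mu$-center. But the estimates that actually carry the proof are either assumed or mis-stated. \textbf{The profile you write down is false.} Whp the longest subdivided path has length $\Theta(L\log m^{\neq 2})$. Its middle half has $\pi\asymp L\log m^{\neq 2}/m$ and conductance $\asymp 1/(L\log m^{\neq 2})$, whereas your formula gives $\gtrsim 1/L$ at that mass. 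The same example shows the one-sided Lov\'asz--Kannan profile (minimum over all $S$ with $\pi(S)\le x$) cannot give the theorem. That profile is $O(1/(L\log m^{\neq 2}))$ for every $x\gtrsim L\log m^{\neq 2}/m$, so the integral is $\Omega(L^2\log^3 m^{\neq 2})$. You need the two-sided, connected-set form of Theorem~\ref{conductancebound} together with the true shape of the profile. That shape is $\cond(S)>1/d(S)$ for $d(S)\lesssim CL\log m^{\neq 2}$, which contributes $O(L^2\log^2 m^{\neq 2})$, and then $\cond(S)=\Omega(1/L)$ on the $\Theta(\log m^{\neq 2})$ scales above. Proving that $\Omega(1/L)$ is the heart of the matter, and your step~3 does not give it. A bound on the \emph{longest} path only yields $d(S)=O(L\log m^{\neq 2})\,d(F_S)$, hence conductance $\Omega(1/(L\log m^{\neq 2}))$ and again $L^2\log^3 m^{\neq 2}$. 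This loss comes from path lengths, not from kernel boundaries, so a union bound over kernel sets does not fix it. What is needed is a uniform \emph{average}-length bound: whp every connected $F\subseteq J$ with $d(F)\ge C\log m^{\neq 2}$ has incident paths of total length $O(L\,d(F))$. This is Proposition~\ref{saulgoodman}, a union bound over the expected $m^{\neq 2}2^{d}$ such $F$ from \eqref{treenumber} against a $2^{-\Omega(d)}$ tail from Proposition~\ref{ballsinbins}. That union bound only wins once $d\gtrsim\log m^{\neq 2}$, which is why the threshold is $L\log m^{\neq 2}$.

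\textbf{You also need constant conductance in $J$ for all connected subgraphs with $C\log m^{\neq 2}\le d_J\le m^{\neq 2}/C$, not only for kernel sets.} A pendant tree of $J$-size $s$ has $J$-conductance $O(1/s)$. So does a vertex with $O(1)$ kernel edges together with pendant trees of total $J$-size $s$. Treating decorations separately by hitting times simply leaves these sets out of the profile, and a hitting-time bound on trees cannot be added to a conductance bound on the core without further argument. The paper rules such sets out by showing that every such subgraph has excess linear in its degree (Corollary~\ref{nosmallexcess}). This is where supercriticality $R_{\cD}\ge\rho m^{\neq 2}$ enters, through the positive drift of the exploration in Lemma~\ref{nocenterhelpslem}; your plan never uses it. Switching then turns linear excess into linear boundary (Lemma~\ref{lem:sparse-conductance}). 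Lemmas~\ref{decorationsize} and~\ref{decorationsize2} bound decorations and decorated paths both individually and in total mass, which is what lets kernel expansion lift to $G$ at the top scales. \textbf{That kernel expansion is not a black box here either.} The kernel is not a configuration multigraph with fixed degrees: degrees up to $\Theta(m^{\neq 2})$ are allowed, and its law is induced from a uniform simple graph. The paper proves only $\out(W)\ge d(W)/\log^{1/3}d(K)$, by switchings (Lemma~\ref{kernelbound}). This suffices because it is used only on the few dyadic scales above $m/\log\log m^{\neq 2}$. The hypothesis $m\ge h(m)m^{\neq 2}$ is what makes almost every edge of $J$ green (Proposition~\ref{specificedgeredoryellowprop}), so that these switchings behave like the configuration model despite high degrees. \textbf{Finally, the small components.} Theorem~\ref{thm:mainnew1} only gives $O(\max\{n^{1/3}h(n),L\log m^{\neq 2}\})$ vertices. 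The needed $O(L\log m^{\neq 2})$ comes from Lemma~\ref{decorationsize} with Proposition~\ref{saulgoodman}, after which Theorem~\ref{diameterboundonmix} gives $O((L\log m^{\neq 2})^2)$ at once. Your hedge about degree-$1$ and high-degree vertices is precisely the unproved point.
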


We remark that if $m>100 m^{\neq 2}$, then $m<3n$ which implies 
$\frac{m \log m^{\neq 2}}{m^{\neq 2}} =o(n)$. 

To bound the mixing time without assumptions on $\frac{m}{m^{\neq 2}}$, we use the following theorem, which allows us to combine our previous two results.

\begin{thm}[{\cite[Corollary 4.2]{peres}}]
\label{diameterboundonmix}    
    The mixing time of any connected graph $H$ is  at most $8\,\diam(H)|E(H)|$.
\end{thm}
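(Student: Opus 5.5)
Since this result is quoted from \cite[Corollary 4.2]{peres}, the plan is to reconstruct the standard argument. It bounds the mixing time by hitting times, and hitting times by effective resistance. Throughout, let $H=(V,E)$ be connected and write $t_{\mathrm{hit}}=\max_{x,y}\mathbf E_x[\tau_y]$ for the lazy walk, where $\tau_y$ is the hitting time of $y$.

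\textbf{Step 1: bound hitting times by commute times.} For the simple (non-lazy) random walk, the commute time identity gives
\[
\mathbf E_x[\tau_y]+\mathbf E_y[\tau_x]=2|E|\,R_{\mathrm{eff}}(x,y).
\]
Here $R_{\mathrm{eff}}$ is the effective resistance when every edge has unit resistance.

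\textbf{Step 2: bound resistance by distance.} By Rayleigh monotonicity, deleting every edge except those of a shortest $x$--$y$ path can only increase resistance. Hence $R_{\mathrm{eff}}(x,y)\le \mathrm{dist}(x,y)\le \diam(H)$. Combining with Step 1, every hitting time of the simple walk is at most $2|E|\diam(H)$.

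\textbf{Step 3: pass to the lazy walk.} The lazy walk is the simple walk slowed down by independent geometric holding times of mean $2$. So its expected hitting times are exactly twice those of the simple walk. Therefore $\mathbf E_\pi[\tau_x]\le t_{\mathrm{hit}}\le 4|E|\diam(H)$ for the lazy walk.

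\textbf{Step 4: convert hitting times into a mixing time bound.} For a lazy reversible chain, one has a bound of the form $t_{\mathrm{mix}}(1/4)\le c\max_x \mathbf E_\pi[\tau_x]+O(1)$. This comes from the comparison between mixing and hitting of large sets, or from strong stationary/stopping time arguments (Levin--Peres, Chapter~10). Since the threshold $1/e$ used in the definition of $\tau_{\mix}$ exceeds $1/4$, we have $\tau_{\mix}\le t_{\mathrm{mix}}(1/4)$. The conclusion is then $\tau_{\mix}(H)\le 4c\,|E|\diam(H)$.

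\textbf{Main obstacle: the constant $8$.} The scaling $O(\diam(H)|E(H)|)$ follows immediately from the steps above. Getting the constant $8$, however, requires $c=2$ in Step 4. That means the sharper version of the hitting-time-to-mixing comparison valid for lazy chains, together with the bound $\mathbf E_\pi[\tau_x]\le 2|E|\diam(H)$ for the simple walk before doubling. I would first try to obtain the $\pi$-averaged bound directly. One route is the random target lemma. Another is the identity $\mathbf E_\pi[\tau_x]=\sum_y\pi(y)\mathbf E_y[\tau_x]$ combined with the commute time bound applied to each pair $(x,y)$.

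If the constant comes out slightly larger this way, only the numerical factor changes. That does not affect the use of the theorem later in the paper, where only the order $O(\diam(H)|E(H)|)$ matters.
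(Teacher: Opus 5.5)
You have the standard derivation: commute-time identity, Rayleigh monotonicity, doubling for the lazy holding times, then a hitting-time-to-mixing comparison. The paper gives no argument of its own, only the citation, so there is no alternative route to compare with. What your write-up leaves open is the explicit constant $8$, and your ``main obstacle'' paragraph puts the difficulty in the wrong place.

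\textbf{The factor $c=2$.} This is a known result, not something you need to hope for. For a reversible chain with $P(x,x)\ge\frac12$ for all $x$, one has $t_{\mathrm{mix}}(1/4)\le 2\max_x\mathbf{E}_\pi[\tau_x]+1$ (Levin--Peres--Wilmer, Chapter~10). It follows from three facts:
\begin{itemize}
\item $\|P^m(x,\cdot)-\pi\|_{TV}^2\le\frac14\bigl(P^{2m}(x,x)/\pi(x)-1\bigr)$;
\item for a lazy reversible chain (all eigenvalues in $[0,1]$), $t\mapsto P^t(x,x)/\pi(x)-1$ is nonnegative and non-increasing;
\item $\pi(x)\mathbf{E}_\pi[\tau_x]=\sum_{t\ge0}\bigl(P^t(x,x)-\pi(x)\bigr)$.
\end{itemize}

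\textbf{The averaged hitting time.} No extra work is needed here. Since $\mathbf{E}_\pi[\tau_x]=\sum_y\pi(y)\mathbf{E}_y[\tau_x]\le\max_y\mathbf{E}_y[\tau_x]$, your Steps~2--3 already bound it. The random target lemma is irrelevant.

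\textbf{The dropped additive term.} With $c=2$ your steps give $8|E|\diam(H)+1$, not $8|E|\diam(H)$. You can absorb the $+1$ using slack in the commute identity. For $x\neq y$ the simple walk has $\mathbf{E}_x[\tau_y]\ge\mathrm{dist}(x,y)\ge1$, so $\mathbf{E}_y[\tau_x]\le 2|E|\diam(H)-1$. After doubling, every lazy hitting time is at most $4|E|\diam(H)-2$. Hence $\tau_{\mix}(H)\le t_{\mathrm{mix}}(1/4)\le 8|E|\diam(H)-3$ whenever $|V(H)|\ge2$; a single vertex is trivial, with mixing time $0$.

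With these points added, your argument proves the statement with the stated constant. Your observation that $\tau_{\mix}\le t_{\mathrm{mix}}(1/4)$ because $1/4<1/e$ is correct and is needed.
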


As a result, we obtain the following.

\begin{cor}\label{cor:mainnew3} For any $h$ going to infinity with $m$, the following holds. Let $(\cD_{\ell})_{\ell \ge 1}$ be a sequence of feasible degree sequences. 
    If  $n_0(\cD_{\ell})=0$ and $G(\cD_{\ell})$  has a giant component, then  whp 
    the mixing time of every  component is  $O( n^2 h(m) \log n  )$. 
\end{cor}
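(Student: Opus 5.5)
We prove Corollary~\ref{cor:mainnew3} by a case split on the ratio $m/m^{\neq 2}$, using Theorem~\ref{thm:mainnew2} when the ratio is large and Theorems~\ref{thm:mainnew1} and~\ref{diameterboundonmix} when it is small. Fix $h\to\infty$. Passing to subsequences, it suffices to treat two regimes separately (a standard subsubsequence argument then gives the whp statement for the whole sequence). Set $h'(m)=h(m)^{1/2}$, which also tends to infinity.

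\emph{Regime 1: $m\ge h'(m)\,m^{\neq 2}$.} We apply Theorem~\ref{thm:mainnew2} with $h'$ in place of $h$. Whp every component has mixing time $O\bigl((m\log m^{\neq 2}/m^{\neq 2})^2\bigr)$. In this regime $m>100m^{\neq 2}$ eventually, so by the remark after Theorem~\ref{thm:mainnew2} we have $m<3n$. Trivially $m/m^{\neq 2}\le m<3n$, and $\log m^{\neq 2}\le \log m=O(\log n)$. We need the bound $O(n^2h(m)\log n)$, which only requires $(m/m^{\neq2})^2\log^2 m^{\neq2} = O(n^2 h\log n)$. This is where care is needed.

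If $m^{\neq 2}\ge \log n$, then $(m\log m^{\neq 2}/m^{\neq 2})^2\le (3n)^2\log^2 m^{\neq 2}/(m^{\neq 2})^2=O(n^2)$, since $\log^2 x/x^2$ is bounded. If $m^{\neq 2}<\log n$, then $\log m^{\neq 2}\le \log\log n$ and the bound is $O(n^2(\log\log n)^2)$. This is $O(n^2\log n)$, and so in both cases the bound is fine. Note that $m^{\neq 2}\to\infty$ holds here, by the discussion citing \cite{joos2018determine}.

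\emph{Regime 2: $m< h'(m)\,m^{\neq 2}$.} By Theorem~\ref{thm:mainnew1}, whp every component $H$ has $\diam(H)=O(h'(m)\log m^{\neq 2})=O(h'(m)\log n)$; here we use $m\le\binom n2$, so $\log m^{\neq 2}\le 2\log n$. Theorem~\ref{diameterboundonmix} then gives $\tau_{\mix}(H)\le 8\,\diam(H)|E(H)|$, and it remains to bound $|E(H)|$. The crude bound $|E(H)|\le m$ could be as large as $n^2$, which is too big. This is the main obstacle, and we handle it with a further split according to whether $m\le 3n$ or not.

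If $m=O(n)$, then $\tau_{\mix}=O(n\,h'(m)\log n)$, which is well within the target. In general, $m^{\neq 2}\le m$ and every vertex outside degree $2$ contributes to $m^{\neq 2}$, so we argue as follows. Degree-$2$ vertices contribute at most $n$ to $m$, so $m-n_2$ is the half-sum over vertices of degree $\neq 2$, where degree-$1$ vertices count $1/2$ and the others count at least $3/2$ in excess; hence $m\le n+m^{\neq2}$. Thus we always have $|E(H)|\le m\le n^2$. Combining with the diameter bound,
\[
\tau_{\mix}(H)=O\bigl(n^2 h'(m)\log n\bigr)=O\bigl(n^2h(m)\log n\bigr),
\]
since $h'\le h$ once $h\ge1$. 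This finishes Regime 2, and the corollary follows by combining the two regimes. In Regime 1 the loss is only in the $\log^2$ versus $\log$ bookkeeping, which the estimate $\log^2x/x^2=O(1)$ absorbs.
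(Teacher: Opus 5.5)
Your proof is correct and follows the paper's argument: split according to whether $m \ge h(m)\, m^{\neq 2}$, use Theorem~\ref{thm:mainnew2} together with $m<3n$ in the first regime, and use Theorems~\ref{thm:mainnew1} and~\ref{diameterboundonmix} with $|E(H)|\le m\le\binom{n}{2}$ and $\log m^{\neq 2}=O(\log n)$ in the second. The bound $|E(H)|\le m=O(n^2)$ that you call ``too big'' and ``the main obstacle'' is in fact exactly what is needed, and it is what the paper uses; so the detour through $m\le n+m^{\neq 2}$, the substitution $h'=\sqrt{h}$, and the sub-split on $m^{\neq 2}\ge\log n$ are all unnecessary.
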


\begin{proof}
    We partition our sequence of degree sequences  up into two sequences, 
    the first consisting of those degree sequences for which $m \ge h(m) m^{\neq 2}$ 
    and the second consisting of those for which $m <h(m)  m^{\neq 2}$. 
    We apply Theorem \ref{thm:mainnew2} to the  first sequence, and Theorems \ref{thm:mainnew1}
    and \ref{diameterboundonmix} to the second, and use that $m^{\ne 2}=O(n^2)$. 
\end{proof}

We shall show below that the  bound on the order of the second largest component   in Theorem \ref{thm:mainnew1}  is   tight  up to the $h(n)$ factor. 
  We claim the diameter bound in Theorem \ref{thm:mainnew1} is tight up to a $O(\log m^{\neq 2})$,  and the mixing time bound in Theorem~\ref{thm:mainnew2}  is tight up to a $O((\log m^{\neq 2})^2)$ factor. To prove our claim, for  any function $f(n)$ which is $o(n)$ but  goes to infinity as $n \to \infty$,  consider the degree sequence of length $n$ which has $2 \lceil f(n) \rceil$
 elements which are $\lceil f(n) \rceil$ and the rest of which are two. Any realization of this sequence 
 consists of cycle components and components containing at least one  vertex of degree $\lceil f(n)\rceil$. 
 Lemma 35 of \cite{joos2018determine}\footnote{The $M$ of that lemma is $2m^{\neq 2} = 4f(n)^2$, and its quantity $n'_2$ is at least $n_2-m^{\neq 2}=1-o(1)n$.} implies that  with high probability the cycle components have total size $o(n)$, and Lemma 28 of \cite{joos2018determine}\footnote{Again $M=2m^{\neq 2}=4f(n)^2$, and $L$ is the set of vertices of degree at least  
 $\frac{\sqrt{M}}{\log M}$ which is precisely the set of the $2f(n)$ high-degree vertices.} implies  whp  all of the high-degree 
 vertices lie in the same component. The degree-two vertices in this  giant component are partitioned into 
 paths which have endpoints joined to high-degree vertices. One of these  paths $P$  has length 
 at least $\frac{n}{f(n)^2}$. This implies that the component's diameter is at least $\frac{n}{f(n)^2}$ and that its mixing time is at least the minimum $t$ such that if we start a lazy random walk at the midpoint of $P$, then the probability it has moved halfway to one of the endpoints within the first $t$ steps is at least $\frac{1}{2}-\frac{1}{e}$; this quantity is $\Omega(\frac{n^2}{f(n)^4})$. Since $m^{\neq 2} =\theta(f(n)^2)$ this proves our claim. 

 We can improve on our bounds  provided  the maximum degree $\Delta(G)=d_n$ does not grow too quickly, obtaining the following.

 \begin{thm}\label{thm:mainnew4} For any $h$ going to infinity with $m$, the following holds. Let $(\cD_{\ell})_{\ell \ge 1}$ be a sequence of feasible degree sequences. 
    If  $n_0(\cD_{\ell})=0$, $G(\cD_{\ell})$  has a giant component, and $\Delta=o(\sqrt{m})$  then whp all components but the largest have 
    $O(h(m)\frac{m \log m^{\neq 2}}{m^{\neq 2}})$ vertices 
    and every component has diameter
     $O(h(m)\frac{m \log m^{\neq 2}}{m^{\neq 2}})$ and mixing time $O(h(m)(\frac{m \log m^{\neq 2}}{m^{\neq 2}})^2)$.
\end{thm}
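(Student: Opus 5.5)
The plan is to reduce everything to the multigraph $G^{\neq 2}$ together with the paths of degree-two vertices that subdivide its edges, and to exploit $\Delta=o(\sqrt m)$ to show that the small components of $G^{\neq 2}$ are tiny. We work in the configuration model on $G^{\neq 2}$. Write $M=2m^{\neq 2}$ for the number of half-edges of vertices of degree not two. Conditioned on $G^{\neq 2}$, the subdividing paths are distributed as a uniformly random allocation of the $n_2$ degree-two vertices (minus those in cycle components) into the $m^{\neq 2}$ edge slots, with uniform ordering. Standard balls-in-bins estimates then show that whp every edge of $G^{\neq 2}$ is subdivided into a path of length $O(\frac{m\log m^{\neq 2}}{m^{\neq 2}})$. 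Furthermore, any fixed set of $k$ edges of $G^{\neq 2}$ carries in total $O(k\frac{m}{m^{\neq 2}})$ degree-two vertices, up to the log factor, which can be absorbed using $h(m)$. Since simplicity has probability bounded below only when $\Delta$ is small relative to $\sqrt m$, I will use the switching/transfer results from the earlier sections (those underlying Theorem~\ref{thm:mainnew1}) rather than conditioning on simplicity directly.

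The first key step is to show that whp every component of $G^{\neq 2}$ other than the giant has $O(h(m))$ edges. I explore a component from a vertex by revealing half-edge pairings. Each new half-edge is paired with a half-edge of a vertex of degree $d$ with probability proportional to $d$. Because $\Delta=o(\sqrt m)$, no single vertex of high degree can dominate, and the exploration is approximated by a branching process whose offspring mean is the size-biased excess degree. The existence of a giant (via Theorem~\ref{thm:mainnew1} and the criterion of \cite{joos2018determine}) implies that this process is supercritical. Therefore its dual, which governs the non-giant components, is subcritical in the usual Molloy--Reed sense, and the exploration from a fixed start dies within $k$ steps or survives to size linear in $M$. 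I then want a tail bound of the form $\p{\text{component has } k \text{ edges and is not the giant}}\le e^{-ck}$, uniformly in the start vertex. A union bound over the $O(M)$ starting half-edges then needs $k=O(\log M)$, which is too big. Instead I combine a first-moment bound for the number of vertices in such components with the slack provided by $h(m)$. This gives that whp every non-giant component has $O(h(m))$ edges. Moreover, each such component contains at most one cycle, as already stated in Theorem~\ref{thm:mainnew1}.

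With this in hand, the size and diameter bounds are immediate. A non-giant component consists of $O(h(m))$ edges of $G^{\neq 2}$, each subdivided into paths of length $O(\frac{m\log m^{\neq2}}{m^{\neq2}})$. Its diameter is at most the number of paths times the maximum path length, but the bound on the total number of degree-two vertices over $O(h(m))$ slots gives the stated $O(h(m)\frac{m\log m^{\neq 2}}{m^{\neq 2}})$ for both size and diameter. Cycle components consisting solely of degree-two vertices are handled separately, and cycle lengths in a random 2-regular piece are also controlled. The diameter of the giant is taken from Theorem~\ref{thm:mainnew1}. For mixing times, I use Theorem~\ref{diameterboundonmix} on non-giant components. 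A component with $L$ vertices and at most one cycle has $|E|\le L$, so the mixing time is $O(\mathrm{diam}\cdot L)$. This product is $O(h(m)^2(\frac{m\log m^{\neq2}}{m^{\neq2}})^2)$, so I will run the argument with $\sqrt{h}$ in place of $h$. For the giant, I reuse the conductance/path-length argument behind Theorem~\ref{thm:mainnew2}: when $m\ge h m^{\neq 2}$ it applies directly, and otherwise $\frac{m}{m^{\neq2}}$ is bounded and the same argument gives $O(\log^2)$ mixing on the expander-like core plus short paths.

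The main obstacle is the uniform exponential tail for non-giant components in $G^{\neq 2}$. This tail must hold without a union bound that costs a $\log M$ factor, and it must survive the transfer from the multigraph configuration model to the uniform simple graph when $\Delta$ is only $o(\sqrt m)$. I would first try to get it from a Chernoff bound on the exploration walk drift, using $\Delta=o(\sqrt m)$ to bound the variance. If that fails, I would accept the $\log M$ loss and check whether it is absorbed into the existing $\log m^{\neq 2}$ factor.
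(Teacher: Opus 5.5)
\textbf{There is a genuine gap.} It sits where you located your main obstacle, and the idea that removes it is missing.

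\textbf{The key intermediate claim is false.} You claim that whp every non-giant component of $G^{\neq 2}$ has $O(h(m))$ edges. Take half the vertices of degree $1$ and half of degree $3$. Then $\Delta=3$, $n_2=0$ and $R_{\cD}\approx m^{\neq 2}$, so there is a giant. The non-giant components are trees, and the expected number of them containing $k$ vertices of degree three decays only like $m e^{-ck}$ up to polynomial factors. So whp some non-giant component has $\Theta(\log m)$ edges. No first-moment argument can prove otherwise, and the factor $h(m)$ cannot absorb this, since $h$ may grow arbitrarily slowly.

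\textbf{The repaired route still needs more.} The true bound is $O(\log m^{\neq 2})$ edges. Multiplying that by the maximum path length $O(\frac{m\log m^{\neq 2}}{m^{\neq 2}})$ loses a second logarithm. You would need to control the \emph{total} number of degree-two vertices on the edges of every small connected subgraph of $J$, uniformly. That is the work done by Lemma~\ref{decorationsize} and Proposition~\ref{saulgoodman}.

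\textbf{The configuration-model transfer is not a technicality.} With only $\Delta=o(\sqrt m)$ one can have $\sum_i d_i^2\gg m$. Then the probability of simplicity tends to zero, and your branching-process estimates say nothing about $G(\cD)$. The switching results you defer to, Lemma~\ref{nocenterhelpslem} and everything built on it in Section~\ref{sec:nocenter}, are proved only for degree sequences without a $\mu$-center. You never check that hypothesis.

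\textbf{The giant's mixing time is not handled.} When $m<h(m)m^{\neq 2}$ you give no actual argument, and $m/m^{\neq 2}$ is then only $<h(m)$, not bounded. The multi-scale conductance bounds needed there are exactly the content of Theorem~\ref{thm:mainnew5}.

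\textbf{The missing idea is a two-line reduction, which is the paper's whole proof.} By Theorem~\ref{whenhasgiantthm} there is $\rho\le\rho_0$ with $R_{\cD_\ell}\ge\rho m^{\neq 2}$ and $m^{\neq 2}=\omega(1)$. Let $\mu=\mu(\rho)$ be as in Theorem~\ref{thm:mainnew5}. Since $\Delta=o(\sqrt m)$, for all large $\ell$ we have $\Delta<\mu^3\sqrt m$. Hence $\sum_{i=n-d_n+1}^n d_i\le\Delta^2<\mu^6 m$, which is less than $\mu^2 m^{\neq 2}$ whenever $m^{\neq 2}\ge\mu^3 m$. So $\cD_\ell$ has no $\mu$-center. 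Theorem~\ref{thm:mainnew5} then gives the size, diameter and mixing-time bounds directly, even without the factor $h(m)$. The role of $\Delta=o(\sqrt m)$ is only to exclude a center. It is not there to make the configuration model a good approximation, and it does not.
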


The bound of this theorem is tight,  as witnessed by an example we now sketch. The {\em core} of a graph $G$ is the maximal subgraph of $G$ of minimum degree 2. $G$ is obtained from its core by adding {\it decorations}, which are trees that are disjoint from each other and from the core, each of which is either a component of $G$ or is attached to  a single vertex of the core by a single edge.

Fix $n=3k$ and consider the degree sequence 
\[
\cD=(\,\underbrace{3, \ldots, 3}_{\text{$2k$ times}}\;\;,
  \underbrace{1, \ldots, 1}_{\text{$k$ times}}\; ).
\]
It is straightforward to verify that with high probability $G=G(\cD)$ has a giant component of order $\Theta(n)$, and that there are decorations of order $\Theta(\log n)$ and height $\Theta(\log n)$ attached to the core of the giant component. If the random walk begins within such a decoration, at a vertex of maximal distance from the core, then it takes time $\Theta(\log^2 n)$ to reach the core, and until it does so it has not mixed; therefore, the mixing time is $\Omega(\log^2 n)$.

To see that a bound on $d_n$ is necessary in this theorem, consider the sequence of degree sequences 
  with $d_n=d_{n-1}=\frac{n}{2}=\frac{m+1}{2}$, and $d_i=1$ for $i \in [n-2]$. For such sequences, $G({\mathcal D})$ always consists of two stars, each on $\frac{m+1}{2}$ vertices, with an edge between their centers. In this case, the mixing time is bounded from below by the minimum $t$ for which the probability we have moved from one star to the other after $t$ steps is at least $\frac{1}{2}-\frac1e$, which is $\Omega(n)$. 

Similarly, given $D=o(\sqrt{n})$ with $\frac{n}{D}$ an integer, we may consider a degree sequence $\cD$ with $d_i=\Delta=\frac{n}{D}+D-2$ for  $n-D < i \leq n$ and $d_i=1$ for all other $i$. For such degree sequences, $G(\cD)$ always consists of a clique of $D$ vertices, to each of which $\frac{n-D}{D}$ leaves is attached. On this graph, mixing requires having a decent probability of visiting at least two vertices of the clique, which takes time $\Omega(\frac{n-D}{D^2})$. Since $\Delta=\frac{n}{D}+D-2$ and $D=o(\sqrt n)$, implying $m=(1+o(1))n$, this mixing time is $\Omega(\frac{\Delta^2}{m})$.  Thus, the strengthening of Theorem \ref{thm:mainnew4} obtained by replacing $o(\sqrt{m})$ by $o( \sqrt{m} (\log m)^{3/2})$ is false. However, we believe that the following is true.
\begin{conj}
    \label{con:mainnew5} For any $h$ going to infinity with $m$, the following holds. Let $(\cD_{\ell})_{\ell \ge 1}$ be a sequence of feasible degree sequences. 
    If  $n_0(\cD_{\ell})=0$ and  $G(\cD_{\ell})$  has a giant component,  then whp the mixing time of every  component  is  $O(h(m)(\log m^{\neq 2})^2 \max( \frac{\Delta^2}{m}, (\frac{m}{m^{\neq 2}})^2))$.  
\end{conj}

\subsection{The existence of giant components and the configuration model} \label{sec:configmodel} $~~$\\
We now present the conditions which ensure the existence of a giant component
when $n_0=0$ and $m^{\neq 2}=\omega(1)$. 

Fix a degree sequence $\cD=(d_1,\ldots,d_n)$. 
We can generate $G(\cD)$ using the configuration model (introduced in \cite{BBK1972}) as follows. For $1 \le i \le n$, we take a tuple of $d_i$ labelled halfedges $i1,\ldots,id_i$ corresponding to $i$ 
and then choose a random matching on these halfedges. This may not generate a simple graph, as there can be loops 
and multiple edges. However, if we condition on  choosing a random matching  such that the multigraph is simple, we recover a graph with the distribution of $G(\cD)$.
We let $M=M(\cD )$ be this conditioned random matching. In other words, writing $\cM=\cM(\cD)$ for the set of perfect matchings of $\bigcup_{i \in [n]}\{i1,\ldots,id_i\}$ that yield a simple graph, then $M \in_u \cM(\cD)$ is a uniformly random element of $\cM(\cD)$. 

In both the configuration model, where our choice of a matching is unconditioned, and the uniform simple graph model,  where it is not, we can think of exploring the component containing a given vertex $i$ by growing a spanning tree for it.
We start with a tree consisting of vertex $i$, together with $d_i$ unexplored open halfedges leaving the tree. 
In each iteration we expose the halfedge matched to one of these open halfedges. 
If the matching halfedge is already in the tree, we lose two open  halfedges. Otherwise, if the matching halfedge 
corresponds to some vertex $j$ not in the tree, we lose one open halfedge but gain $d_j-1$ open halfedges from $j$ 
(one of the halfedges from $j$ is used to link into the existing tree). 

In the configuration model, the probability that in the first iteration we link to a halfedge corresponding to some $j \neq i$ is $\frac{d_j}{(\sum_k d_k)-1}$. Thus, ignoring the possibility of a loop, the expected change in the number of unexplored open halfedges is $\sum_{j \neq i} \frac{d_j(d_j-2)}{(\sum_k d_k)-1}$. More generally, ignoring the possibility of edges to the explored tree (which is a reasonable approximation when the tree is small), and
letting $U$ be the unexplored vertices not yet in the tree, at each iteration the expected change in the number of unexplored 
open halfedges is $\sum_{k \in U} \frac{d_k(d_k-2)}{\sum_{k \in U} d_k}$.

It turns out that there is a giant component in $G(\cD_{\ell})$ if this expected change stays positive long enough, and that in this case the giant component also contains a positive proportion of all the edges. 
Specifically, we say that $\{\cD_{\ell}\}_{\ell\geq 1}$ has a {\em gigantic component} if there exists $\epsilon>0$ such that the probability that $G({\cal D}_{\ell})$  has a component containing at least $\epsilon m(\ell)$ edges goes to 1 as $\ell$ goes to infinity. Then, recalling that the entries of $\cD=(d_1,\ldots,d_n)$ are non-decreasing, and letting 
$$j_{\D} := \min\{j \in [n] : \sum_{i=1}^j d_i(d_i - 2) > 0 ~\text{or}~j=n\}$$ 
and 
\[
R_{\D} := \sum_{i = j_{\D}}^n d_i\, ,
\]
Theorems 1 and 2 in \cite{joos2018determine} imply the following result.

\begin{thm}[{\cite{joos2018determine}}]
\label{whenhasgiantthm} 
 Let $(\cD_{\ell})_{\ell \ge 1}$ be a sequence of degree sequences each satisfying $n_0=0$. The 
 following are equivalent:
 \begin{enumerate}
 \item $G(\cD_{\ell})$ has a giant component.
 \item $G(\cD_{\ell})$ has a gigantic component.
 \item $m^{\neq 2}=\omega(1)$  and  for some $\rho>0$, and for all large enough $\ell$, $R_{\D_{\ell}} \geq \rho m^{\neq 2}$.
 \end{enumerate}
\end{thm}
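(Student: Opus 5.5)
The statement is attributed to Theorems 1 and 2 of \cite{joos2018determine}, so the plan is to derive the three-way equivalence from those two results plus the reductions already made in the introduction. We organize it around the chain (2)$\Rightarrow$(1)$\Rightarrow$(3)$\Rightarrow$(2).

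\emph{(2)$\Rightarrow$(1).} This is elementary. Since $n_0=0$ we have $2m\ge n$. A component with at least $\epsilon m$ edges that is not a cycle spans at least $\epsilon m$ vertices up to the contribution of its cycles, so it has at least $\epsilon m/2 \ge \epsilon n/4$ vertices once we account for the cycle space. More simply, a connected component with $k$ vertices and $e$ edges satisfies $e\le \binom{k}{2}$, but this is too weak, so we argue differently. If a component $C$ has $e_C\ge\epsilon m$ edges, then since $\sum_{v\in C} d_v = 2e_C$ and the vertices outside $C$ have degree at least $1$, we must show $|C|=\Omega(n)$. For this we invoke Theorem~1 of \cite{joos2018determine}, which characterizes when a giant component exists. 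That theorem gives exactly the condition in (3). Theorem~2 of \cite{joos2018determine} in turn shows that the giant then also carries a constant fraction of the edges. So rather than proving (2)$\Rightarrow$(1) by hand, we will pass through (3) in both directions.

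\emph{(1)$\Leftrightarrow$(3).} Here we use the preprocessing discussed earlier. The requirement $m^{\neq 2}=\omega(1)$ is necessary by \cite[Theorem 6]{joos2018determine}, as recalled in the introduction. Given $m^{\neq2}=\omega(1)$, the cycle components contain $o(n)$ vertices whp. This lets us reduce to $G^{\neq 2}$ with its degree-2 subdivisions, and then apply Theorem~1 of \cite{joos2018determine}, which states that a giant exists iff $R_{\cD}\ge\rho\, m^{\neq 2}$ for some $\rho>0$ and all large $\ell$.

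The main technical point is matching normalizations. The paper's $R_\cD$ and $j_\cD$ must agree with those in \cite{joos2018determine}, where the threshold is stated relative to $M$, the number of halfedges in vertices of degree $\ne 2$, which equals $2m^{\neq 2}$ up to lower-order terms. Note that the summand $d_i(d_i-2)$ vanishes for $d_i=2$, so $j_\cD$ is insensitive to the degree-2 vertices. The only effect of degree-2 vertices on $R_\cD$ is to add at most $2n_2$, which lies entirely inside the range $i\ge j_\cD$. We will therefore check that the statement in \cite{joos2018determine} is phrased with $R$ computed on the full sequence, or, if it is not, that removing the degree-2 terms changes $R$ only by these degree-2 contributions. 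Since $\rho$ is a free constant, the condition survives this normalization change provided we use the version with $m^{\neq 2}$ in the denominator.

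\emph{(3)$\Rightarrow$(2).} We apply Theorem~2 of \cite{joos2018determine}. It asserts that under (3) whp some component contains a constant fraction of the halfedges in vertices of degree $\ne 2$, together with a constant fraction of the subdivided paths, and hence $\Omega(m)$ edges. We expect this to be the main obstacle. The long paths of degree-2 vertices lying on the edges of $G^{\neq2}$ that belong to the giant must account for a constant fraction of $n_2$. This holds because, conditional on $G^{\neq2}$, the degree-2 vertices are distributed over its edges and over cycle components exchangeably. Since the giant of $G^{\neq2}$ contains a constant fraction of the edges of $G^{\neq2}$ and the cycles absorb only $o(n)$ vertices, a concentration argument gives that the giant receives a constant fraction of the degree-2 vertices. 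This closes the cycle of implications.
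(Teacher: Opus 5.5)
Your strategy is the paper's own: the paper offers nothing beyond saying that Theorems 1 and 2 of \cite{joos2018determine} imply the statement. However, the connecting arguments you add leave the cycle of implications open. You announce (2)$\Rightarrow$(1)$\Rightarrow$(3)$\Rightarrow$(2). You then abandon (2)$\Rightarrow$(1), rightly, since a dense component with $\epsilon m$ edges need not have $\epsilon m/2$ vertices, and say you will instead ``pass through (3) in both directions''. But you only ever supply (1)$\Leftrightarrow$(3) and (3)$\Rightarrow$(2). Nothing shows that a gigantic component forces (1) or (3), and this does not follow from a criterion for components with $\Omega(n)$ \emph{vertices}.

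The missing ingredient is the subcritical statement in edge form. If along some subsequence either $m^{\neq 2}=O(1)$ or $R_{\cD}=o(m^{\neq 2})$ (which is what the negation of (3) gives), then whp no component of $G(\cD)$ has $\epsilon m$ edges. You need either to check that the non-existence half of \cite{joos2018determine} is stated for edges, or to convert it.

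The bounded case converts easily. Since $n_0=0$, we have $n_1+3n_{\ge 3}\le 2m^{\neq 2}$, so $m-n=O(1)$. Tree components have $e(C)-|V(C)|=-1$ and there are at most $n_1/2$ of them, while every other component has $e(C)-|V(C)|\ge 0$. Hence every component satisfies $e(C)\le |V(C)|+O(1)$, so a gigantic component would be giant, contradicting \cite[Theorem 6]{joos2018determine}. The case $m^{\neq 2}=\omega(1)$ with $R_{\cD}=o(m^{\neq 2})$ is where a genuine argument (or the precise citation) is required.

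There are two smaller points.

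\begin{itemize}
\item Your normalization remark is backwards. Since $\cD$ is sorted and $d(d-2)$ is $-1$ for $d=1$ and $0$ for $d=2$, the partial sums equal $-n_1\le 0$ throughout the block of degree-two vertices. So, unless no degree exceeds $2$, these vertices all have index below $j_{\cD}$ and contribute nothing to $R_{\cD}$. Also, $M=2m^{\neq 2}$ exactly. Your conclusion that the normalizations agree is still right.
\item In (3)$\Rightarrow$(2), the degree-two vertices are not exactly exchangeable over the edges of $G^{\neq 2}$. Simplicity forbids loops with fewer than two internal vertices and parallel edges with none. Your concentration step would therefore need the kind of care the paper takes with coloured reductions; it is cleaner to take the edge-count conclusion directly from the cited theorem.
\end{itemize}
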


\noindent {\bf Remark:} The parameter $R_{\cD}$ of degree sequence $\cD$ plays an important role later in the paper.

We note that a result analogous to Theorem~\ref{whenhasgiantthm} but for the configuration model could be proved via a reasonably straightforward application of results on 
random walks.
This fact could be used to deduce the result for $\cD_{\ell}=(d_{\ell,1},\ldots,d_{\ell,n(\ell)})$ provided that $\sum_{i \in [n(\ell)]} d_{\ell,i}^2=O(n(\ell))$ as $\ell \to \infty$, since this is a necessary and sufficient condition for the probability that the configuration model generates a simple graph to be bounded away from zero \cite{MR2497380,MR3317354}.
  The difficulty in proving the result  in general is the presence of high-degree vertices, which make loops and parallel edges more likely and also make it more difficult to prove concentration results for the relevant random walks. 
Furthermore, as we have seen, the presence of high degree  vertices weakens the bounds one can obtain on the mixing time.

\subsection{Related work} 

Much is known about diameter and mixing time of the giant component for other random graph models; we do not attempt an exhaustive survey of the literature. In the Erd\H{o}s--R\'{e}nyi random graph $G(n,p)$, Hildebrand \cite{hildebrand1996random} showed a mixing time and diameter of $\frac{\log n}{\log d}$ for $d$ being the expected degree $p(n-1)$ in the regime where $G(n,p)$ is connected whp ($p \geq \frac{\log^2 n}{n}$). Using the tools they developed in \cite{fountoulakis2007bottlenecks} and which we describe later in this section, Fountoulakis and Reed \cite{fountoulakis2008evolution} bound the mixing time of the giant component in the regime $\frac{1+\epsilon}{n} \leq p = O\left(\frac{\sqrt{\log n}}{n}\right)$ where the graph is no longer connected whp; their result is that the mixing time is $\Theta\left(\left(\frac{\log n}{d}\right)^2\right)$ where $d$ is at most $O(\sqrt{\log n})$. In particular, they identify the presence of long bare paths (containing at least $\frac{\log n}{d}$ many vertices of degree 2) as a local obstruction. 
Independently, Benjamini, Kozma, and Wormald \cite{benjamini2014mixing} obtained matching bounds of $\Theta(\log^2 n)$ in the regime of constant $d$ but use a more geometric approach that gives results on $G(n,m)$. The mixing time of the largest connected component of the  critical random graph $G(n,1/n)$ was determined in~\cite{nachmias2008critical}.

For the configuration model, the results of \cite{benjamini2014mixing} show that a configuration with given degrees is a good expander when all degrees are at least 3, leading to the natural diameter of $\Theta(\log n)$ and mixing time of $\Theta(\log^2 n)$. For i.i.d degrees following a power-law distribution, the diameter of the entire configuration has been characterized by \cite{van2005distances1, van2005distances2, fernholz2007diameter} where, e.g., the diameter can be as small as $\Theta(\log \log n)$. Our work bridges these regimes and establishes that for arbitrary supercritical degree sequences, the density of degree-two vertices governs both the diameter and the mixing time of the giant component.

\newpage
\subsection{Our approach}

\subsubsection{Switching}

To handle high-degree vertices, we use arguments that exploit a technique known as {\em switching} (which we also end up needing in the treatment of the low-degree vertices). Given two ordered edges $ab$ and $xy$ of a graph $H$ such that $b \neq x$, $a \neq y$, and $xb,ay \not \in E(H)$, {\em switching on ${ab,xy}$ in $H$} creates a new graph $H_{xy,ab}$ 
with $V(H_{xy,ab})=V(H)$ and $E(H_{xy,ab})
=(H\setminus\{xy,ab\}) \cup \{xb,ay\}$, in which all vertices have the same degrees as they do in $H$. We note that switching on
the ordered pair of edges $xb,ay$ in $H_{xy,ab}$ yields $H$. We also note that $H_{yx,ba}=H_{xy,ab}$, which implies that there are either zero or exactly two switchings between any two graphs. 

We note that if ${\mathcal A}$ and ${\mathcal B}$ are disjoint sets of graphs with degree sequence $\cD$ such that (i) 
for every $G \in {\mathcal A}$ there  are at least  $\delta({\mathcal A})$ switchings on $G$ which yield a graph in 
${\mathcal B}$ and (ii) 
for every $G \in {\mathcal B}$ there   are at most  $\Delta({\mathcal B})$ switchings on $G$ which yield a graph in 
${\mathcal A}$, then 
\begin{equation}\label{switchingineq}
    |{\mathcal A}| \le \frac{\Delta({\mathcal B})}{\delta({\mathcal A})} |{\mathcal B}|\ .
\end{equation}
This fact will be an important tool in bounding probabilities in $G(\cD)$. 

To illustrate the switching approach, we show now that the $O(n^{1/3}h(n))$ bound of Theorem \ref{thm:mainnew1} for the order of the second largest component is tight up to the $h(n)$ term, even 
for graphs with $m^{\neq 2}=m$.
In the example showing this, we take some real $\rho<10^{-6}$. For $a=a(\ell)>10$, let ${\mathcal D}_{\ell}=(d_1,\ldots,d_{\ell})$ have one vertex of degree  $\lceil 2 \rho\ell \rceil $ and a set $H$ of $\lfloor\ell^{1/3} \rfloor$ vertices of degree $\lfloor \frac{\ell^{1/3}}{a} \rfloor $. All other vertices have degree 1.  We know there is a giant component without relying on Theorem \ref{whenhasgiantthm} as $G({\mathcal D}_{\ell})$ always has a component   containing $n=\ell$  and at least 
 $2\rho n$ other vertices. 
To show the bound from Theorem \ref{thm:mainnew1} is tight, it suffices to prove that the probability that some vertex of $H$ is not in this
component, but instead is the center of a star component, is at least $1-\frac{1}{a}$.  
To this end, we note that $2m=(1+2\rho+o(1))\ell$ and that in any realization of $G=G(\cD)$ almost all vertices lie in edge components.
In particular there are always at least $\frac{m}{3}$ edge components. So for $x,y \in H$, given a graph $G$ with $xy \in E(G)$, we can switch  $xy$ with at least $\frac{2m}{3}$ ordered edges  to obtain a graph where $xy \not\in E(G)$. On the other hand, to switch 
from a graph in which $xy \not\in E(G)$ to a graph with $xy \in E(G)$,
we need to use an edge incident to $x$ and an edge incident to $y$ so there are fewer than  $\frac{m^{2/3}}{a^2}$ choices. Thus, taking an auxiliary bipartite graph where $\cA = \{G \in \cG(\cD) : xy \in G\}$ and $\cB = \{G \in \cG(\cD) : xy \notin G\}$, we apply \eqref{switchingineq} to obtain $|\cA| \leq \frac{m^{2/3}/a^2}{2m/3} |\cB|$. But note that since we are working with a uniform distribution we then have $\p{xy \in E(G)} = \frac{|\cA|}{|\cA| + |\cB|} \leq \frac{|\cA|}{|\cB|} \leq \frac{m^{2/3}/a^2}{2m/3} < \frac{2}{a^2\ell^{1/3}}$.

It follows that the expected number of edges within $H$ is at most 
${\ell^{1/3} \choose 2}\frac{3}{2a^2l^{1/3}}<\frac{\ell^{1/3}}{a^2}$. 
Since $a>10$, applying Markov's inequality, the  probability that there are at most $\frac{\ell^{1/3}}{5}$ such edges, and hence that at least $\frac{\ell^{1/3}}{2}$ vertices of $H$ are adjacent to no other vertex 
of $H$, is at least $1-\frac{1}{2a}$. 

Next for each $i \ge \frac{\ell^{1/3}}{2}$ we upper-bound the conditional probability, given that there are exactly $i$ vertices of $H$ adjacent to no other vertex of $H$, that $n$ is adjacent to all of them. In this situation, our auxiliary bipartite graph is formed between the set $\cA_i$ of graphs satisfying this property and $\cB_i$ of graphs in which $n$ is not adjacent to exactly one of the $i$ vertices of $H$ which have no neighbors in $H$.
Given $G \in \cA_i$, we can  switch any edge  from $n$ to $H$ with at least $\frac{2m}{3}$ ordered edges to obtain some graph in $\cB_i$ so there are at least $\frac{2mi}{3}>\frac{m\ell^{1/3}}{3}$ such switches. On the other hand, to switch from some $G \in \cB_i$ to an element of $\cA_i$,  we must use an ordered edge incident to $n$ and an ordered edge incident to the unique vertex in the set of $i$ vertices which $n$ does not neighbor, so there are fewer than $\frac{4 \rho m\ell^{1/3}}{a}$
choices. Again by \eqref{switchingineq},  the conditional probability we are bounding is at most $\frac{4\rho m\ell^{1/3}/a}{m\ell^{1/3}/3} < \frac{1}{2a}$ and the proof is complete.

The above example illustrates one main way in which we use the switching inequality. Another way we will use it is the following. To show that $G(\cD)$ is likely to have some property $Q$, partition $\cG(\cD)$ into sets $\cF_1, \cF_2, \dots, \cF_k$ such that $\bar{Q} = \bigcup_{i \geq i_0} \cF_i$ and $\p{\bar{Q}} = \frac{\sum_{i = i_0}^k |\cF_i|}{\sum_{i = 1}^k |\cF_i|}$. Using an auxiliary bipartite graph on $\cF_{i}$ and $\cF_{i+1}$, we apply the switching inequality to show that $\frac{|\cF_{i+1}|}{|\cF_i|}$ is bounded above by some $C < 1$, and so telescoping gives us a bound of $\sum_{i = i_0}^k |\cF_i| \leq \sum_{i = i_0}^k C^{i-1}|\cF_1|$. In subsequent arguments, we will often omit the precise details of the auxiliary bipartite graph and of such ratio computations, and simply describe the bounds on the number of switches.

\subsubsection{Bounding mixing time and diameter  via conductance}

For a graph $H=(V,E)$ and a set $S \subseteq V$, 
we let $d(S)=d_H(S)$ be the sum of the degrees in $H$ of the vertices in $S$.
We recall that the probability that the lazy random walk is in $S$ when at its steady state, denoted $\pi(S)=\pi_H(S)$, is $\frac{d(S)}{2m}$.   
We let $\ex(S)=\ex_H(S)$, the {\it excess of $S$}, be $d(S)-2|S|+2$. 
We let $\out(S)=\out_H(S)$ be the number of edges between $S$ and $V-S$. 
We let $\cond(S)=\cond_H(S)=\frac{\out(S)}{d(S)}$.  For $0<x<d(V)$
we let $\cond(x)=\cond_H(x)$ be the minimum of $\cond(S)$ over all connected $S$ such that $\frac{x}{2} \le d(S) \le \min\{x,d(V)/2\}$. We omit the underlying graph $H$ from the preceding notation whenever it is clear from context.

Our approach to bounding the {\em diameter} using conductance 
 is to apply the following, observation. 
 Write $\partial_i(v) $ for the set of vertices at distance $i$ from $v$
    and $B_i(v)$ for the set of vertices at distance at most $i$ from $v$.

\begin{prop}
\label{conductanceboundnew}
    For any graph $H$, the diameter  of  $H$ is  at most 
    $2\sum_{j=1}^{\lceil \log d(V) \rceil-1} \cond(2^{j})^{-1}$.     
\end{prop}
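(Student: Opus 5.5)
Our plan is to grow breadth-first balls from an arbitrary vertex and control how fast their volume increases. Fix $v\in V$ and write $B_i=B_i(v)$. Each $B_i$ is connected, and every edge leaving $B_i$ ends in $\partial_{i+1}(v)$. Moreover, each vertex of $\partial_{i+1}(v)$ has degree at least $1$ (as $H$ is connected, which we assume, since otherwise the diameter is infinite and there is nothing to prove). Hence
\[
d(B_{i+1}) \ge d(B_i) + |\partial_{i+1}(v)| \ge d(B_i) + \frac{\out(B_i)}{\Delta}.
\]
This is too weak, so we use the edges themselves instead. Every edge counted in $\out(B_i)$ contributes its far endpoint's degree share, so $d(B_{i+1}) - d(B_i) \ge \out(B_i)$. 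Indeed, each edge from $B_i$ to $\partial_{i+1}$ contributes $1$ to the degree of a vertex in $\partial_{i+1}$, and distinct edges give distinct halfedge contributions. As long as $d(B_i)\le d(V)/2$, we have $\out(B_i)\ge \cond(x)\,d(B_i)$ for any $x$ with $x/2\le d(B_i)\le x$.

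The key step is to count the steps spent in each dyadic window. Suppose $2^{j-1}\le d(B_i)\le 2^j$ and $d(B_i)\le d(V)/2$. Then $B_i$ is an admissible set for $\cond(2^j)$, so
\[
d(B_{i+1}) \ge d(B_i)\bigl(1+\cond(2^j)\bigr) \ge d(B_i) + 2^{j-1}\cond(2^j).
\]
Consequently the number of indices $i$ with $d(B_i)\in[2^{j-1},2^j]$ is at most $\lceil 2^{j-1}/(2^{j-1}\cond(2^j))\rceil$, which is roughly $\cond(2^j)^{-1}$. Since the conductance is at most $1$, this is at most $2\cond(2^j)^{-1}$ after absorbing the ceiling. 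Alternatively one uses the multiplicative form: the number of steps is at most $\log 2/\log(1+\cond)\le 2/\cond$ when $\cond\le1$. Summing over $j$ from $1$ to the first $j$ with $2^j\ge d(V)/2$ shows that within $r:=\sum_j 2\cond(2^j)^{-1}$ steps (roughly) the ball reaches degree more than $d(V)/2$. Note that $d(B_0)=\deg(v)\ge1$, so $j=1$ covers the start. The range $j\le\lceil\log d(V)\rceil-1$ corresponds to $2^j< d(V)$, matching the stated sum.

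To finish, we take two vertices $u,w$ and grow balls from both. Once $d(B_{r_u}(u))>d(V)/2$ and $d(B_{r_w}(w))>d(V)/2$, the two balls must share a vertex, since their degree sums exceed $d(V)$. Hence $\mathrm{dist}(u,w)\le r_u+r_w$, giving the factor $2$ in the statement.

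The main obstacle is bookkeeping at the boundaries of the windows. The ball can jump over a window, which is harmless. The final step crosses $d(V)/2$ while $d(B_i)\le d(V)/2$ still holds, so it is covered. The tight factor-of-2 accounting should come from splitting the "at most $2/\cond(2^j)$ steps per window" estimate against the two endpoints carefully. If the constant does not close exactly, we would use $\cond\le 1/2$ for sets with $d(S)\le d(V)/2$ to sharpen $\log(1+c)\ge c/2$ into a per-window count of $\cond(2^j)^{-1}$, and then apply it to each of the two endpoints.
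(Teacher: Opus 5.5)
Your route is the paper's (BFS balls, dyadic windows, two balls of volume exceeding $d(V)/2$ must meet), but as written it does not reach the stated constant.

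You bound the steps spent in window $j$ by $2\cond(2^j)^{-1}$, so the radius from one vertex is at most $2\sum_j\cond(2^j)^{-1}$. Then $\mathrm{dist}(u,w)\le r_u+r_w$ doubles this. What you have actually proved is $\diam(H)\le 2\sum_j(\lfloor\cond(2^j)^{-1}\rfloor+1)\le 4\sum_j\cond(2^j)^{-1}$. Each endpoint needs its own radius, so the factor $2$ cannot be ``split'' between them.

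The proposed repair rests on a false claim. Admissible sets can have conductance $1$: any single vertex $S=\{v\}$ has $\out(S)=d(S)$. For example, $\cond(2)=1$ for the path on three vertices, where the bound $2$ equals the diameter, so there is no slack. In fact the one-sided recursion $d(B_{i+1})\ge(1+c)\,d(B_i)$ alone never gives fewer than $\lfloor c^{-1}\rfloor+1$ indices per window; with $c=1$ it permits $2^{j-1}\mapsto 2^j$.

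The paper gets the $2$ by asserting $i_{k+1}\le i_k+\cond(2^{k+1})^{-1}$, from increments exceeding $2^k\cond(2^{k+1})$ across a window of length $2^k$. So its per-window count is $\cond^{-1}$ and the factor $2$ comes only from the two endpoints. Read literally, that step skips the same integer rounding. For the paper's $O(\cdot)$ applications your factor $4$ would be harmless, but it is not the proposition as stated.

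The ingredient that closes the gap is to use both boundaries of the shell. Each edge at $\partial_{i+1}(v)$ goes back to $\partial_i(v)$, stays inside $\partial_{i+1}(v)$, or goes on to $\partial_{i+2}(v)$, so
\[
d(B_{i+1})-d(B_i)=d(\partial_{i+1}(v))\ge\out(B_i)+\out(B_{i+1}).
\]
Put each $i$ with $d(B_i)\le d(V)/2$ into window $j(i)=\max\{1,\lceil\log_2 d(B_i)\rceil\}$, which lies in $[1,\lceil\log d(V)\rceil-1]$ when $d(V)\ge 3$.

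If $i$ and $i+1$ share window $j$, both balls are admissible for $c=\cond(2^j)$. Hence $(1-c)\,d(B_{i+1})\ge(1+c)\,d(B_i)$. Volumes within one window differ by a factor at most $2$. So a window containing $L\ge 2$ indices forces $c\le\frac13$ and $(L-1)\ln\frac{1+c}{1-c}\le\ln 2$. Since $\ln\frac{1+c}{1-c}\ge 2c$, this gives $L\le 1+\frac{\ln 2}{2c}\le\frac1c$. If $L\le 1$, then $L\le 1/c$ trivially.

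So every vertex reaches volume above $d(V)/2$ within $\sum_j\cond(2^j)^{-1}$ steps, and your intersection argument then gives the stated bound. This holds for $d(V)\ge 3$; for $H=K_2$ the sum is empty and the statement fails.
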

\begin{proof} 
    
    Note that $d(B_i(v))\geq d(B_{i-1}(v))+\out(B_{i-1}(v))$. So,
    setting $i_k$ to be the smallest $i$ such that $d(B_i(v))>\min\{2^k, d(V)/2\}$, if $0 \le k \le \lceil \log d(V) \rceil-2 $
    and $i$ is such that $i_k \le i<i_{k+1}$, then $d(B_{i+1}(v))>d(B_{i}(v))(1+\cond(2^{k+1}))>d(B_{i}(v))+2^k\cond(2^{k+1})$. Hence  $i_{k+1} \le i_k+\cond(2^{k+1})^{-1} $. Thus  for every $v$, $$d(B_{\sum_{j=1}^{\lceil \log d(V) \rceil-1} \cond(2^{j})^{-1}}(v))>\frac{d(V)}{2}\ .$$
    Hence for every $u$ and $v$, $B_{\sum_{j=1}^{\lceil \log d(V) \rceil-1} \cond(2^{j})^{-1}}(u)$ and 
    $B_{\sum_{j=1}^{\lceil \log d(V) \rceil-1} \cond(2^{j})^{-1}}(v)$  intersect. 
\end{proof}

Our approach to bounding the {\em mixing time} using conductance 
 is to apply the following which is a special case of the main result of \cite{fountoulakis2007bottlenecks}.

\begin{thm}[\cite{fountoulakis2007bottlenecks}]
\label{conductancebound}
    For any graph $H$,  the mixing time of the lazy random walk on $H$ is  
    $O(\sum_{j=1}^{\lceil \log  d(V) \rceil-1} \cond(2^{j})^{-2})$. 
\end{thm}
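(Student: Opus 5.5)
This result is the special case of the main theorem of \cite{fountoulakis2007bottlenecks}, so the plan is to recover it as an instance of that bound rather than prove it from scratch. The main theorem there bounds the mixing time of a lazy reversible chain by
\[
O\Bigl(\sum_{j=1}^{\lceil \log \pi_{\min}^{-1}\rceil} \Phi(2^{-j})^{-2}\Bigr),
\]
where $\Phi(p)$ is the minimum conductance of a \emph{connected} set $S$ with $p/2 \le \pi(S)\le p$ and $\pi(S)\le 1/2$. The first step is to record the dictionary between that setting and the present one. For the lazy walk on $H$ we have $\pi(S)=d(S)/d(V)$. The ergodic flow out of $S$ is $Q(S,V\setminus S)=\out(S)/(2d(V))$, so the chain conductance $Q(S,V\setminus S)/\pi(S)$ equals $\cond(S)/2$ with our definition $\cond(S)=\out(S)/d(S)$. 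The factor $2$ from laziness only changes constants.

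The second step reindexes the scales. Set $p=x/d(V)$; then the condition $p/2\le \pi(S)\le \min(p,1/2)$ reads exactly $x/2\le d(S)\le\min\{x,d(V)/2\}$. Hence $\Phi(x/d(V))=\cond(x)/2$. As $x$ ranges over the powers $2^j$ with $1\le j\le \lceil\log d(V)\rceil-1$, the corresponding $p$ range dyadically from $2/d(V)$ up to roughly $1/2$. This covers every stationary mass a connected set can have, since $\pi_{\min}\ge 1/d(V)$. The smallest dyadic level, where $d(S)\le 1$, cannot occur for a nonempty $S$ unless $H$ is trivial, so it contributes nothing.

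The third step handles the alignment of dyadic windows. Their grid is $2^{-j}$ in stationary measure, while ours is $2^j/d(V)$, and these may be offset when $d(V)$ is not a power of two. Each of their windows $[p/2,p]$ is covered by at most two of our windows $[2^{j-1},2^j]/d(V)$. So each term in their sum is at most the sum of the corresponding two terms in ours, and the total changes by at most a factor $2$.

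The only real point of care is this bookkeeping of endpoints: the top scale where $d(V)/2$ truncates, and the connectedness requirement, which is part of their theorem so that we may restrict to connected $S$ as in our definition of $\cond(x)$. Neither affects more than constants, so the stated $O(\cdot)$ bound follows.
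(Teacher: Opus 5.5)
Your proposal is correct and takes the same route as the paper, which proves nothing itself and simply invokes the main theorem of Fountoulakis--Reed as a special case. Your dictionary ($\pi(S)=d(S)/d(V)$, chain conductance equal to $\cond(S)/2$, and each of their dyadic windows covered by two of yours) supplies the bookkeeping that the citation leaves implicit. One minor correction: a single leaf has $d(S)=1$, so nonempty sets at the bottom level do occur, but they already lie in the $x=2$ window ($1\le d(S)\le 2$), so the bound is unaffected.
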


\subsubsection{Partitioning a sequence of degree sequences}
\label{sec:dichotomy}

For the remainder of the paper, we consider a sequence $(\cD_{\ell})_{\ell \ge 1}$ of degree sequences with $n_0(\cD_{\ell})=0$ and for which, for some  $\rho>0$, 
for all large enough $\ell$, $R_{\cD_{\ell}} \geq \rho m^{\neq 2}=\omega(1)$. 
By Theorem~\ref{whenhasgiantthm}, it follows that the sequence has a giant component, i.e., there  exists some $\epsilon=\epsilon(\rho)>0$ such that with high probability $G(\cD_{\ell})$ contains a component of order at least $\epsilon n(\ell)$.
  Since if this property holds for some $\rho$, it holds for all smaller $\rho$, we can and do take this $\rho$ to be smaller than some fixed $\rho_0>0$, which we choose small enough to make various inequalities that arise during the proofs work out. 

We apply different techniques according to the shape of the degree sequences
in our sequence, so our first step is to divide the sequence of degree sequences into a set of three subsequences 
of degree sequences. To this end, for any $\mu>0$, we say a degree sequence $\cD$ {\it has a $\mu$-center} if  it holds that 
$\sum_{n-d_n+1}^n d_i\geq \mu^2 m^{\neq 2}$ and $m^{\neq 2} \geq \mu^3 m$. 

Given $\rho$ as above, we choose  $\mu>0$ sufficiently small in terms of $\rho$, and in particular less than $\frac{\epsilon}{8}$, and for some $\lambda$ sufficiently small in terms of $\mu$ (and hence $\rho$) 
we divide our sequence of degree sequences into the following three subsequences:

\begin{enumerate}
    \item[(i)] Degree sequences with a $\mu$-center for which $\sum_{i : d_i>\lambda \sqrt{m}} d_i \le \rho \mu^{10}m$,
    \item[(ii)] Degree sequences with  a $\mu$-center for which $\sum_{i : d_i>\lambda \sqrt{m}} d_i > \rho \mu^{10}m$,
    \item[(iii)] Degree sequences without   a $\mu$-center. 
\end{enumerate}

We note that if 
$\Delta < \mu^3 \sqrt{m}$ then $\sum_{i=n-d_n+1}^n d_i \le \Delta^2 < \mu^6 m$, so if $m^{\ne 2} \ge \mu^3 m$ then $\sum_{i=n-d_n+1}^n d_i < \mu^3 m^{\ne 2}$. Similarly, if $m \ge h(m)m^{\neq 2}$ for some $h(m) \to \infty$, the condition $m^{\neq 2} \ge \mu^3 m$ is trivially violated for large $m$. Therefore in both cases, the degree sequence has no $\mu$-center. By applying our results which bound mixing time and diameter using conductance, we shall prove  the following, which implies Theorem \ref{thm:mainnew1} for degree sequences without a $\mu$-center along with Theorems \ref{thm:mainnew2} 
and \ref{thm:mainnew4}, whose hypotheses restrict to sequences without a $\mu$-center.

\begin{thm}\label{thm:mainnew5} For every $\rho \le \rho_0$ 
there is a $\mu>0$ such that the following holds. 
Let $(\cD_{\ell})_{\ell \ge 1}$ be a sequence of feasible degree sequences 
such that for each $\ell$, $n_0(\cD_{\ell})=0$, $m^{\neq 2}=\omega(1)$, $R_{\cD_{\ell}} \ge \rho m^{\neq 2}$ and  ${\cD}_{\ell}$ is without a $\mu$-center.
    Then with high probability all  components except the largest have  $O(\frac{m \log m^{\neq 2}}{m^{\neq 2}})$ vertices  
    and every component has diameter
     $O(\frac{m \log m^{\neq 2}}{m^{\neq 2}})$ and mixing time $O((\frac{m \log m^{\neq 2}}{m^{\neq 2}})^2)$. 
\end{thm}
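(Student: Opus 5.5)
Set $L:=\frac{m\log m^{\neq 2}}{m^{\neq 2}}$. The plan is to reduce everything to conductance lower bounds and then feed them into Proposition~\ref{conductanceboundnew} and Theorem~\ref{conductancebound}.

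\textbf{Target.} For every component $C$ and every scale $2^j\le d(C)/2$, I aim to show that whp
\[
\cond_C(2^j)^{-1}=O\Bigl(\frac{m}{m^{\neq 2}}\Bigr).
\]
At small scales I only need the weaker bound $\cond_C(2^j)^{-1}=O\bigl(\frac{m}{m^{\neq 2}}\cdot\frac{\log m^{\neq 2}}{j}\bigr)$, or something of that shape. Summing the first bound over the $O(\log m)$ scales would lose an extra logarithm, so I will not use it uniformly. Instead I split the scales into two groups.

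\textbf{Small scales.} For $2^j\le (m/m^{\neq 2})\,\mathrm{polylog}$, every connected set $S$ that is not the whole component has $\out(S)\ge1$. This gives $\cond(2^j)^{-1}\le 2^j$, and the sum of these terms is geometric, dominated by $O(L)$ (and by $O(L^2)$ after squaring).

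\textbf{Large scales.} Here I need a genuine expansion estimate for the kernel. Replace $G$ by $G^{\neq 2}$ with its suppressed paths. Conditioned on the kernel, the degree-$2$ vertices are distributed as a uniform allocation of $n_2$ vertices to the $m^{\neq 2}$ kernel edges, because switchings among degree-$2$ vertices preserve uniformity. Under this allocation, whp no kernel edge carries more than $O(L)$ vertices, which is a balls-in-bins maximum-load bound. Hence a set $S$ of degree $x$ in $G$ corresponds to a kernel set of degree roughly $x\,m^{\neq 2}/m$, up to boundary paths.

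Two further inputs complete this step:
\begin{itemize}
\item Decorations, meaning trees hanging off the core, have depth and size $O(L)$ whp. I would prove this by the switching argument of the example, bounding the probability of a long tree exploration via the ratio $|\cF_{i+1}|/|\cF_i|\le C<1$.
\item The core of the kernel is an expander at every scale. Precisely, for every connected $S$ in the kernel core with $d(S)\le d/2$, we have $\out(S)\ge c\,\ex(S)$ or better $\out(S)\ge c\,d(S)$. I would prove this with a first-moment count over connected sets, using switchings in place of configuration-model probabilities. The no-$\mu$-center hypothesis is used here: it guarantees that no small set of high-degree vertices can absorb a constant fraction of the kernel's edges, so the usual union bound over sets of each degree goes through.
\end{itemize}

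\textbf{Assembling the giant component.} Translating back, a connected $S$ with $d(S)=x$ in $G$ contains kernel mass about $x\,m^{\neq 2}/m$ up to $O(L)$ error. Its boundary is then $\Omega(x\,m^{\neq 2}/m)$, which gives the large-scale bound $\cond(x)^{-1}=O(m/m^{\neq 2})$. Summing over the roughly $\log m^{\neq 2}$ large scales gives a diameter of $O(L)$ by Proposition~\ref{conductanceboundnew}. Theorem~\ref{conductancebound} gives mixing time $O(\log m^{\neq 2}\,(m/m^{\neq 2})^2+L^2)=O(L^2)$.

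\textbf{Non-giant components.} I would show separately that whp every component other than the largest has excess at most $O(1)$ and total kernel size $O(\log m^{\neq 2})$. This comes from an exploration/branching comparison driven by $R_{\cD}\ge\rho m^{\neq 2}$, together with the uniqueness of the gigantic component from Theorem~\ref{whenhasgiantthm}. Combined with the $O(L)$ path lengths, this gives size, diameter, and mixing time $O(L)$, $O(L)$, and $O(L^2)$ for these components.

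\textbf{Main obstacle.} I expect the hardest step to be the expansion of the kernel core at all scales for arbitrary degree sequences without a $\mu$-center. High-degree vertices break the configuration-model calculation, and the switching counts must be carried out carefully, with the union bound over connected sets organised by excess.
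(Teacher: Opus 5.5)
Your architecture matches the paper's: dyadic-scale conductance bounds fed into Proposition~\ref{conductanceboundnew} and Theorem~\ref{conductancebound}, the trivial bound $\out(S)\ge 1$ below scale $\Theta(L)$, and expansion of the reduced graph above it. The gap is that the step carrying all the weight does not follow from what you invoke. A maximum-load bound on path lengths, together with individual $O(L)$ bounds on decorations, only shows that a connected $S$ with $d(S)=x$ meets $\Omega(x/L)$ edges of $J$, not $\Omega(x\,m^{\neq 2}/m)$. Fed into your scheme, this gives $\cond(2^j)^{-1}=O(L)$ at each of the $\Theta(\log m^{\neq 2})$ large scales. That yields diameter $O(L\log m^{\neq 2})$ and mixing time $O(L^2\log m^{\neq 2})$, a logarithm too much.

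What you need is a bound uniform over connected subgraphs $F$ of $J$: the paths incident to $F$ have total length $O(d_J(F)\,m/m^{\neq 2})$ once $d_J(F)\gtrsim \log m^{\neq 2}$. The paper proves this in Proposition~\ref{saulgoodman}. It takes a union bound over the expected at most $m^{\neq 2}2^{d}$ spanning-tree submatchings of such $F$, each failing with probability $o\bigl(1/(m^{\neq 2}d^2 2^{d})\bigr)$ by the binomial tail of Proposition~\ref{ballsinbins}.

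Decoration mass must likewise be controlled collectively, since a few kernel vertices carrying many decorations would defeat your translation. At intermediate scales the paper bypasses the kernel altogether. Lemma~\ref{lem:sparse-conductance} gives constant conductance for every connected subgraph of $J$, decorations included, with $C\log m^{\neq 2}\le d_J(F)\le m^{\neq 2}/C$. This holds because such subgraphs have linear excess (Lemma~\ref{countsmallexcess}, Corollary~\ref{nosmallexcess}). That excess bound, driven by the drift estimate of Lemma~\ref{nocenterhelpslem}, is where $R_{\cD}\ge\rho m^{\neq 2}$ and the no-$\mu$-center hypothesis actually enter. Note that when $m^{\neq 2}<\mu^3 m$ the hypothesis does not restrict degrees at all; what is used instead is that almost all edges of $J$ are green (Proposition~\ref{specificedgeredoryellowprop}). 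Your small-scale cutoff must also be $CL$, not $(m/m^{\neq 2})\,\mathrm{polylog}$, or the geometric sum already exceeds $O(L)$.

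Second, the top scales $m/\log\log m^{\neq 2}<d(S)\le d(K^*)/2$, with $K^*$ the component containing the kernel, need an argument you have not given. There the kernel part of $S$ can exceed half of $K$, while the complement of $S$ consists mostly of decorations and decorated paths with almost no kernel vertices. Kernel expansion then only yields a boundary proportional to that tiny kernel part. The paper first moves whole decorations and decorated paths to one side. It then combines the exponential tail of Lemma~\ref{decorationsize2} with Proposition~\ref{saulgoodman} to show that a side with small kernel part and few boundary edges cannot have degree at least $m/(2\log\log m^{\neq 2})$. Since only $O(\log\log\log m^{\neq 2})$ dyadic scales lie in this range, the weak bound $\out(W)\ge d(W)/\log^{1/3}d(K)$ of Lemma~\ref{kernelbound} suffices there. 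Your constant-expansion claim for the kernel is therefore stronger than anything the paper needs or proves.

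Finally, Theorem~\ref{whenhasgiantthm} as stated gives no uniqueness. The paper controls the other components through kernel connectivity (Corollary~\ref{kernelalltogether}): every other component has at most one cycle. Hence it has $O(\log m^{\neq 2})$ edges of $J$ by Lemma~\ref{decorationsize}, and so $O(L)$ vertices by Proposition~\ref{saulgoodman}. Your kernel expansion would give connectivity, but you must draw that consequence explicitly rather than appeal to a branching comparison.
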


 In Section \ref{sec:withcenter} we prove Theorem \ref{thm:mainnew1} for degree sequences with a $\mu$-center by combining Theorem \ref{thm:mainnew5} and ad-hoc techniques using bounded-distance expansions from high-degree vertices.  Specifically, we prove and exploit the fact that when the degree sequence has a center, whp the set of vertices of degree at least $\lambda \sqrt{m}$ have pairwise distance 
$o(\log m)$ from each other.

We note that when applying Theorem \ref{thm:mainnew5}, we actually will apply the following equivalent statement: for every $g$ going to $\infty$ as $m \to \infty$ and  $\rho \le \rho_0$, 
there is a $\mu>0$, a function $b$ going to $0$ as $m \to \infty$, and a function $f$ with 
$f(x)=O(x)$ as $x \to \infty$
 such that the following holds. 
For every degree sequence without a $\mu$-center with  $n_0(\cD)=0$, $m^{\neq 2} \ge g(m)$, and $R_{\cD} \ge \rho m^{\neq 2}$, the probability that the following fails is at most $b(m)$: 
  all  components except the largest have at most $f(\frac{m \log m^{\neq 2}}{m^{\neq 2}})$ vertices   
    and every component has diameter
     at most $f(\frac{m \log m^{\neq 2}}{m^{\neq 2}})$ and mixing time at most $f(\frac{m \log m^{\neq 2}}{m^{\neq 2}})^2$.

\subsection{Notation and outline}
We perform several reductions to $G$ in order to analyze it, and introduce some notational conventions to aid the reader. Typically $G$ will refer to $G(\D)$, a uniformly random simple graph with degree sequence $\D$. $H$ will be used for a generic graph or as a dummy variable. $J$ will usually be a colored multigraph that results from our first reduction applied to $G$ (described in Section \ref{subsec:coloredconfigmodel}). $K$ will be a multigraph with minimum degree 3 that results from our second reduction applied to $J$ (described in Section \ref{subsection:kernel}). When referring to a graph, the size will mean the number of edges while the order will mean the number of vertices; when referring to subsets, we will use size and order interchangeably.

The outline of the remainder of the paper is as follows. In \Cref{sec:degreetwo}, we characterize the distribution of degree two vertices between cycle and non-cycle components and within paths of the non-cycle components. In \Cref{sec:nocenter}, we address degree sequences without a center, proving \Cref{thm:mainnew5}. In \Cref{sec:withcenter}, we handle degree sequences with a center, finishing the proof of \Cref{thm:mainnew1}.

\section{Vertices of Degree Two}\label{sec:degreetwo}

Theorem \ref{whenhasgiantthm} implies that if $m^{\ne 2}=m^{\neq 2}_{\cD_{\ell}}=O(1)$  then $\{\cD_{\ell}\}_{\ell \ge 1}$ does not have a giant  component. Throughout the section, we assume the sequence of degree sequences does have a giant component, so, in particular, we can and do assume there is a function $f$ such that $f(\ell) \to \infty$ as $\ell \to \infty$ and such that $m^{\neq 2}_{\cD_{\ell}} \ge f(\ell)$. 

 We use $\Cyc(G)$ to denote the subgraph of $G$ consisting of all cycle components in $G$ and $\cyc(G)$  to denote  the number of edges (or, equivalently, vertices) contained by $\Cyc(G)$.  
 
 \subsection{Bounding the size of the union of the cycle components}
 Our goal in this subsection is to prove the following proposition. 
 \begin{prop}\label{fewcycles}
      There exists $f_0>0$ such that for 
       any degree sequence $\mathcal D$   satisfying $m^{ \neq 2}>f_0$, we have 
       $$\cyc(G(\cD)) < \frac{203(n_2 +m^{\neq 2}) \log m^{\neq 2}}{m^{\neq 2}}$$ 
       with probability at least $1-(\log \log m^{\neq 2})^{-1}$.
 \end{prop}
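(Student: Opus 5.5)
The plan is to condition on everything except the placement of the degree-$2$ vertices, and then compare, by direct counting, the number of completions in which the cycle components use $s$ vertices for each value of $s$. Write $m'=m^{\neq 2}$, $N=n_2+m'$ (so $N=m$), and let $S_2$ be the set of degree-$2$ vertices. Every $G\in\cG(\cD)$ determines:
\begin{itemize}
\item the multigraph $G^{\neq 2}$ on the vertices of degree $\neq 2$, with $m'$ edges, each carrying a label recording its endpoints and which of its copies it is;
\item the set $C\subseteq S_2$ of vertices lying in cycle components;
\item the $2$-regular simple graph on $C$;
\item for each edge of $G^{\neq 2}$, an ordered sequence (oriented by some fixed convention) of the vertices of $S_2\setminus C$ subdividing it.
\end{itemize}
Conversely, any such data gives a graph with degree sequence $\cD$. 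It is simple iff every loop of $G^{\neq 2}$ receives at least $2$ subdividing vertices, and within each class of parallel edges all but at most one receive at least $1$.

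Fix $G^{\neq 2}$ (as a labelled multigraph) and condition on it; this conditioning is harmless since it is a partition of $\cG(\cD)$. Let $r_0\le 2m'$ be the total forced number of subdividing vertices. A standard stars-and-bars count, after first placing the forced vertices, shows that the number of ways to distribute $k$ labelled vertices into the $m'$ sequences respecting the constraints is
\[
k!\binom{k-r_0+m'-1}{m'-1}
\]
up to a factor depending only on $G^{\neq 2}$ (the ordering of the slots), not on $k$. Writing $c(s)$ for the number of $2$-regular simple graphs on $s$ labelled vertices, the number of completions with $|C|=s$ is therefore proportional to
\[
W(s)=\binom{n_2}{s}\,c(s)\,(n_2-s)!\binom{n_2-s-r_0+m'-1}{m'-1}=n_2!\,\frac{c(s)}{s!}\binom{n_2-s-r_0+m'-1}{m'-1}.
\]

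Two estimates drive the bound.
\begin{enumerate}
\item Since $c(s)\le s!$ and $c(0)=1$, the factor $c(s)/s!$ never exceeds its value at $s=0$. (The classical asymptotic $c(s)/s!\sim e^{-3/4}/\sqrt{\pi s}$ would even give a polynomial improvement, but it is not needed.)
\item The binomial ratio satisfies
\[
\frac{\binom{n_2-s-r_0+m'-1}{m'-1}}{\binom{n_2-r_0+m'-1}{m'-1}}=\prod_{i=0}^{s-1}\frac{n_2-r_0-i}{n_2-r_0+m'-1-i}\le\Bigl(1-\frac{m'-1}{N}\Bigr)^{s}\le \exp\Bigl(-\frac{s(m'-1)}{N}\Bigr).
\end{enumerate}
Consequently $W(s)/W(0)\le \exp(-s(m'-1)/N)$ for every $s$. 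This requires $s\le n_2-r_0$; larger $s$ are impossible anyway.

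Summing over $s\ge x$ with $x=\frac{203N\log m'}{m'}$, and using $W(0)\le\sum_s W(s)$, gives
\[
\p{\cyc(G)\ge x}\le \sum_{s\ge x}e^{-s(m'-1)/N}\le \frac{e^{-x(m'-1)/N}}{1-e^{-(m'-1)/N}}\le \frac{2N}{m'}\,(m')^{-200}.
\]
This is conditional on $G^{\neq 2}$, hence unconditionally as well. The factor $N/m'$ is what makes the statement nontrivial: $N/m'$ is not bounded by a power of $m'$ in general.

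I expect this to be the main obstacle. When $n_2$ is huge compared with $m'$ (say $N\ge (m')^{199}$), the bound above is useless. In that regime I would instead use that $x\ge N$ for $m'$ large whenever $203\log m'\ge m'$, which is false; so a better comparison is needed. I would first try keeping the $c(s)/s!\approx s^{-1/2}$ factor. Second, I would compare with $W(s')$ for $s'$ near the typical value $N/m'$ rather than with $W(0)$, i.e. lower-bound the normalizing sum by the roughly $N/m'$ terms with $s\le N/m'$, each at least $e^{-1}W(0)$. This gives
\[
\p{\cyc\ge x}\le e\,\frac{\sum_{s\ge x}e^{-sm'/N}}{(N/m')\,e^{-1}}=O\bigl((m')^{-200}\bigr),
\]
which is far stronger than the required $1-(\log\log m')^{-1}$, once $f_0$ is large enough to absorb the $m'-1$ versus $m'$ and constant losses. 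Finally, I would double-check the simplicity constraints in the count: for $s\le 2$ we have $c(s)=0$, and loops or multi-edges in $G^{\neq 2}$ only enter through $r_0$, which cancels in the ratios.
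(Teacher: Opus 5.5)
\paragraph{Main gap.} Your setup matches the paper's: you condition on the reduced multigraph (the paper's $\mathcal H$) and compare the weights of the possible sizes of the cycle part. You compute those weights exactly, whereas the paper gets the one-step ratio \eqref{eq:cycswitch} by switching a single degree-two vertex between the cycle components and the subdivision. The argument breaks at exactly the obstacle you flag.

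The step ``each term with $s\le N/m'$ is at least $e^{-1}W(0)$'' is false. The ratio $W(s)/W(0)$ carries the factor $c(s)/s!$, which vanishes for $s=1,2$, equals $1/6$ at $s=3$, and is asymptotic to $e^{-3/4}/\sqrt{\pi s}$. So $\sum_{s\le N/m'}W(s)$ is only of order $\sqrt{N/m'}\,W(0)$. Combined with your tail bound, which used the crude $c(s)/s!\le 1$, this gives at best $\p{\cyc(G)\ge x}\lesssim\sqrt{N/m'}\,(m')^{-200}$. That is still vacuous when $n_2$ is super-polynomially larger than $m'$, which is precisely the regime where the proposition is needed.

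The missing idea is to use the decay of $c(s)/s!$ on the tail side as well, and to compare the tail with an earlier window rather than with $W(0)$.
\begin{itemize}
\item By Corollary~\ref{cyclecountcor}, there is $t_0$ with $W(s+1)\le\bigl(1-\frac{m'-1}{N}\bigr)W(s)$ for all $s\ge t_0$. Alternatively, the exact recursion $c(s+1)=s\,c(s)+\binom{s}{2}c(s-2)$ yields $c(s+1)\le(s+1)c(s)$ for $s\ge 3$.
\item Take $L=\lceil 2N\log m'/(m'-1)\rceil$. Then $W(s+L)\le(m')^{-2}W(s)$ for all $s\ge t_0$.
\item Summing block by block, the total weight on $[t_0+L,\infty)$ is at most $2(m')^{-2}$ times the weight on $[t_0,t_0+L)$.
\item Since $t_0+L<203N\log m'/m'$ for large $m'$, this proves the statement with no lower bound on the normalizing sum. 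This telescoping is exactly what the paper does.
\end{itemize}

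\paragraph{A smaller error in the count.} Your stars-and-bars formula is wrong when $G^{\neq 2}$ has a class of $p\ge 2$ parallel non-loop edges. The condition ``at most one of them is left unsubdivided'' is not a slot-by-slot lower bound. Split according to which edge, if any, of each such class is left empty. Let $A_0$ be $2$ per loop plus the total size of all parallel classes of size at least $2$, and let $N_q$ count the ways to pick $q$ such classes and one empty edge in each. Then the number of admissible distributions of $k$ labelled vertices is $k!\sum_q N_q\binom{k-A_0+m'-1}{m'-1-q}$. This is a mixture of binomials with different lower indices, not a $k$-independent multiple of $k!\binom{k-r_0+m'-1}{m'-1}$. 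For instance, three parallel edges between two vertices of degree $3$ give $k!\,(k-1)(k+4)/2$, not a constant times $k!\binom{k}{2}$. So multi-edges do not enter only through a cancelling $r_0$.

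This is repairable. Since $q\le m'/2$, each summand, and hence the sum, shrinks by a factor of at least $1-\frac{m'/2-1}{N}$ when $k$ drops by one. The block argument then goes through with $L$ doubled. The paper's switching formulation avoids this bookkeeping entirely: it needs only an inequality, obtained by counting the $n_2-t-1+e(\mathcal H)$ edges available for subdivision against the at most $n_2-t$ vertices available for suppression.
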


To prove this, we use the following result on the number $C_t$  of graphs which are disjoint unions of cycles on 
$t$ labelled vertices.

\begin{thm}[\cite{flajolet2009enumeration}, Example VI.2]
$$C_{t}=\left(1+\frac{5}{8t}+O(t^{-2})\right)\frac{e^{-3/4}}{\sqrt{\pi t}}t!$$
\end{thm}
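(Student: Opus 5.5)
The plan is the standard symbolic-method-plus-singularity-analysis route of Flajolet and Sedgewick. A disjoint union of cycles on $t$ labelled vertices in a simple graph is a set of undirected cycles, each of length at least $3$. An undirected cycle on $k\ge 3$ labelled vertices can be realised in $(k-1)!/2$ ways, so its exponential generating function contribution is $\frac{z^k}{2k}$. The class of cycles therefore has exponential generating function
\[
\sum_{k\ge 3}\frac{z^k}{2k}=\frac12\Bigl(\log\frac{1}{1-z}-z-\frac{z^2}{2}\Bigr).
\]
Taking sets (the exponential of this), the exponential generating function of $(C_t)$ is
\[
C(z)=\sum_{t\ge 0}C_t\frac{z^t}{t!}=\frac{e^{-z/2-z^2/4}}{\sqrt{1-z}} .
\]

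The second step is singularity analysis. The factor $g(z)=e^{-z/2-z^2/4}$ is entire, so $C$ is analytic in a $\Delta$-domain at $z=1$, and $z=1$ is its unique dominant singularity. Expanding $g$ around $z=1$ gives $g(1)=e^{-3/4}$ and $g'(1)=-e^{-3/4}$. Hence
\[
C(z)=e^{-3/4}\Bigl((1-z)^{-1/2}+(1-z)^{1/2}+O\bigl((1-z)^{3/2}\bigr)\Bigr)
\]
as $z\to1$ in the $\Delta$-domain. The transfer theorems then let us extract coefficients term by term, with the $O((1-z)^{3/2})$ remainder contributing $O(t^{-5/2})$.

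For the third step, use $[z^t](1-z)^{-1/2}=\binom{2t}{t}4^{-t}=\frac{1}{\sqrt{\pi t}}\bigl(1-\frac{1}{8t}+O(t^{-2})\bigr)$. Also use $[z^t](1-z)^{1/2}=\frac{t^{-3/2}}{\Gamma(-1/2)}\bigl(1+O(t^{-1})\bigr)=-\frac{1}{2\sqrt{\pi}\,t^{3/2}}\bigl(1+O(t^{-1})\bigr)$. Summing these and multiplying by $t!$ gives $C_t=\frac{e^{-3/4}}{\sqrt{\pi t}}\,t!\,\bigl(1+\frac{c}{t}+O(t^{-2})\bigr)$ with $|c|=\frac58$.

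The only delicate point, and the step I expect to be the main obstacle, is the bookkeeping of the second-order term. My computation gives $c=-\frac18-\frac12=-\frac58$, whereas the statement has $+\frac{5}{8t}$. So I would recheck the expansion of $g$ at $1$ and the sign of $\Gamma(-1/2)$ against the source's worked example before settling the sign. For the application in Proposition~\ref{fewcycles}, however, only the leading asymptotic $C_t=\Theta(t!/\sqrt{t})$ is needed, and that part is robust.
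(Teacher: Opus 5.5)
Your argument is the one behind the cited source, Flajolet--Sedgewick Example~VI.2: the EGF $e^{-z/2-z^2/4}/\sqrt{1-z}$ followed by singularity analysis at $z=1$, which is all the paper offers. Your bookkeeping is right: the coefficient is $-\frac{5}{8t}$, so the $+$ in the displayed statement is a typo. As a check, $C_{10}=286884$ gives $C_{10}/10!\approx 0.07906$ against $e^{-3/4}/\sqrt{10\pi}\approx 0.08428$, a ratio of about $0.938\approx 1-\frac{5}{80}$.

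The sign does not matter for Corollary~\ref{cyclecountcor}. With an $O(t^{-2})$ relative error the $c/t$ term cancels in the ratio, giving $C_{t+1}/C_t=\sqrt{t(t+1)}\,(1+O(t^{-2}))=t+\frac12+O(1/t)$ for either sign. This second-order precision is what gives $C_{t+1}\le (t+1)C_t$ for large $t$ in Proposition~\ref{fewcycles}. A bare $C_t=\Theta(t!/\sqrt t)$, as your last remark suggests, would not be enough.
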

In particular, this implies:
\begin{cor}
\label{cyclecountcor}
    $$\frac{C_{t+1}}{C_t}=\left(1+\frac{1}{2t}+o\Big(\frac{1}{t}\Big)\right)t\ .$$
\end{cor}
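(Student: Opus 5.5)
Corollary \ref{cyclecountcor} follows by dividing the asymptotic expression for $C_{t+1}$ by that for $C_t$ and expanding each factor to first order in $1/t$, keeping track only of terms up to $O(1/t)$ relative error.

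The ratio splits into three factors.
\[
\frac{C_{t+1}}{C_t}=\frac{(t+1)!}{t!}\cdot\sqrt{\frac{t}{t+1}}\cdot\frac{1+\frac{5}{8(t+1)}+O(t^{-2})}{1+\frac{5}{8t}+O(t^{-2})} .
\]
The factorial ratio is $t+1=t\bigl(1+\frac1t\bigr)$. The square-root factor is $\bigl(1+\frac1t\bigr)^{-1/2}=1-\frac{1}{2t}+O(t^{-2})$. For the last factor, note that $\frac{5}{8(t+1)}-\frac{5}{8t}=O(t^{-2})$, so it equals $1+O(t^{-2})$.

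Multiplying the three expansions gives
\[
\frac{C_{t+1}}{C_t}=t\Bigl(1+\frac1t\Bigr)\Bigl(1-\frac1{2t}+O(t^{-2})\Bigr)\bigl(1+O(t^{-2})\bigr)=t\Bigl(1+\frac{1}{2t}+O(t^{-2})\Bigr).
\]
Since $O(t^{-2})=o(1/t)$, this is the stated form. An equivalent way to write it, which is the form useful later, is $C_{t+1}/C_t=t+\frac12+o(1)$.

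There is no genuine obstacle. The only points needing care are to expand the square root to first order correctly, and to check that the $\frac{5}{8t}$ correction terms cancel to order $t^{-2}$ rather than contributing at order $1/t$.
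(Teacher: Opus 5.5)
Your proof is correct and follows the same route as the paper. The paper gives no separate proof; it simply says the corollary follows ``in particular'' from the asymptotic formula. Your computation, dividing the expressions for $C_{t+1}$ and $C_t$ to get $(t+1)\bigl(1+\frac1t\bigr)^{-1/2}\bigl(1+O(t^{-2})\bigr)=t+\frac12+O(1/t)$, is exactly that implication made explicit.

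As you note, the argument only needs the correction term to have the form $c/t+O(t^{-2})$ for a fixed constant $c$. That is fortunate, because singularity analysis of $e^{-z/2-z^2/4}(1-z)^{-1/2}$ actually gives $1-\frac{5}{8t}+O(t^{-2})$, not the $+\frac{5}{8t}$ quoted in the paper, and the sign does not affect the ratio.
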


\begin{proof}[Proof of Proposition~\ref{fewcycles}]
Given a multigraph $\mathcal H$ which has  no vertices of degree 2, we write $\cG_{\mathcal H}=\cG_{\mathcal H}(\cD)$ for the set of elements of $\cG(\cD)$ which consist of a subdivision of $\mathcal H$
together with some cycle components. We note that if $\cG_{\cH} \ne \emptyset$ we must have $e({\mathcal H})=m^{\neq 2}$.
We will control the number of edges in cycle components of a uniform sample from $\cG_{\mathcal H}$ using switching. However, we use a slightly different form of switching, which involves taking a vertex of degree 2 from a component and using it to subdivide an edge in a different component. 
 
 More precisely, let $\cG_{{\mathcal H},t}=\cG_{{\mathcal H},t}(\cD)$ be the graphs in $\cG(\cD)$ consisting of a subdivision of $\mathcal H$ along with cycle components that together contain exactly $t$ vertices. We swap between graphs in $\cG_{{\mathcal H},t}$ and $\cG_{{\mathcal H},t+1}$. Given $G \in \cG_{{\mathcal H},t+1}$, let $G_{\mathcal H}$ be its component that is a subdivision of $\mathcal H$. We produce $G' \in \cG_{{\mathcal H},t}$ by taking $G_{\mathcal H}$, choosing a vertex from the cycle components of $G$ and using it to subdivide an edge of $G_{\mathcal H}$, and then distributing the remaining $t$ vertices across cycle components. This yields at least $(t+1)(n_2-(t+1)+e(\cH))C_t$ choices for $G'$. 

In the other direction, given $G' \in \cG_{{\mathcal H},t}$, we construct $G \in \cG_{{\mathcal H},t+1}$ by taking $G'_{\mathcal H}$, suppressing one of its vertices of degree 2, and distributing the $t+1$ remaining vertices amongst cycle components. This yields at most $(n_2-t)C_{t+1}$ choices.
By the same reasoning as \eqref{switchingineq}, we see that 
\begin{equation}\label{eq:cycswitch}
    |\cG_{{\mathcal H},t+1}| \leq \frac{n_2 - t}{(t+1)(n_2 - t -1+ e({\mathcal H}))}\frac{C_{t+1}}{C_t} |\cG_{{\mathcal H},t}|\ .
\end{equation}

Applying Corollary \ref{cyclecountcor}, we may fix $t_0$ such that $\frac{C_{t+1}}{(t+1)C_t} \le 1$ for $t \ge t_0$. Then for $t \ge t_0$ we have 
\begin{align*}
    |\cG_{{\mathcal H},t+1}|
    & \le \left(1-\frac{e({\mathcal H})-1}{n_2-t-1+e({\mathcal H})}\right)|\cG_{\mathcal{H},t}|\, ,\\
    & \le \exp\left(-\frac{e({\mathcal H})-1}{n_2-t-1+e({\mathcal H})}\right)|\cG_{{\mathcal H},t}|\, ,
\end{align*}
so by a telescoping product we obtain that for $n_2 \ge   s > t \ge t_0$, 
\begin{align*}
    |\cG_{{\mathcal H},s}|
    & \le \exp\left(- \sum_{i=t+1}^s \frac{e({\mathcal H})-1}{n_2+e({\mathcal H})-i}\right)|\cG_{{\mathcal H},t}|\, ,\\
    & \le \left(\frac{n_2+e({\mathcal H})-s}{n_2+e({\mathcal H})-t}\right)^{(e({\mathcal H})-1)}|\cG_{{\mathcal H},t}|\\
    & \le 
    \exp\left(-\frac{e({\mathcal H})(s-t)}{2(n_2+e({\mathcal H})-t)}\right)|\cG_{{\mathcal H},t}|. 
\end{align*}

If $n_2 \ge e({\mathcal H})/100$ then 
whenever $s-t \ge 202 n_2\log e({\mathcal H}) /e({\mathcal H})$ this gives 
\[
|\cG_{{\mathcal H},s}| \le e({\mathcal H})^{-1}|\cG_{{\mathcal H},t}|.
\]
It follows that if $G$ is a uniform sample from $\cG_{\mathcal H}$,  then $\cyc(G)< \frac{203n_2\log m^{\neq 2}}{m^{\neq 2}}$ with  probability  at least $1-\frac{2}{m^{\neq 2}}$. 

On the other hand, if $n_2 \le e({\mathcal H})/100$ then $e({\mathcal H})/2(n_2+e({\mathcal H})-t) \ge 1/3$ so the above bound gives 
\[
|\cG_{{\mathcal H},s}| \le e({\mathcal H})^{-1}|\cG_{{\mathcal H},t}| 
\]
whenever $s-t \ge 3\log e({\mathcal H})$, so if $G$ is a uniform sample from $\cG_{\mathcal H}$, then with probability  at least  $1-\frac{2}{m^{\neq 2}}$, we have $\cyc(G)<3\log e({\mathcal H}) $, which is less than $\frac{203(n_2+m^{\neq 2})  \log m^{\neq 2}}{m^{\neq 2}}$.

In both cases,  we see that for $G$ a uniform sample from $\cG_{\mathcal H}$, we have  
\[
\p{\cyc(G)<\frac{203(n_2+m^{\neq 2}) \log m^{\neq 2}}{m^{\neq 2}}} > 1-\frac{1}{\log \log m^{\neq 2}}.
\]
Given $\mathcal H$, since
$G(\cD)$ conditioned on being in $\cG_{\mathcal H}$ is a uniformly random element of $\cG_{\mathcal H}$, it follows that the probability that  $\cyc(G(\cD)) <\frac{203(n_2+m^{\neq 2}) \log m^{\neq 2}}{m^{\neq 2}}$ is  also at least  $1-\frac{1}{\log \log m^{\neq 2}}$.
\end{proof}
When $n_2=\omega(m^{\neq 2})$, we sometimes condition on the event that the number $n'_2$ of degree-two vertices in the subdivision is $(1+o(1))(n_2)$. Proposition~\ref{fewcycles} tells us that this occurs with probability $1-o(1)$.

\subsection{Cores, kernels, homeomorphic reductions, and coloured reductions} 

Given a simple graph $G$, the {\em homeomorphic reduction} of $G$ is the multigraph $J(G)$ obtained from $G$ as follows. First delete all cycle components. Then, for each maximal-length path $P$ in $G-\Cyc(G)$ all of whose internal vertices have degree $2$, replace $P$ by a single edge joining the endpoints of $P$. 

Recall that for a graph $G$, the {\em core} of $G$ is the maximum subgraph of $G$ with minimum degree $2$. The {\em kernel} of $G$, denoted $K(G)$, is the homeomorphic reduction of the core of $G$.

The {\em coloured reduction} of $G$ is obtained by colouring the edges of the homeomorphic reduction $J=J(G)$ in green, yellow, and red, according to the following rules. We colour an edge of $J$ red if the corresponding path in $G$ has no internal vertex, 
yellow if it has just one internal vertex, and green if it has at least 2 internal vertices. We note that by our assumption that $G$ is simple, no two red edges of $J$ can be parallel and no loop can be yellow or red. 

Given a degree sequence $\cD=(d_1,\ldots,d_n)$, we write $\cJ(\cD)=\{J(G): G \in \cG(\cD)\}$. 
Note that if $G$ has degree sequence $\cD$ then $J(G)$  
has $n-n_2(\cD)$ vertices and $m^{\neq 2}=m(G)-n_2(\cD)$ edges, and the degree sequence of $J$ is $\cD$ with the vertices of degree 2 deleted, which we denote by $\cD^{\ne 2}$.

\subsection{Switching on the coloured reduction and on the kernel}\label{subsec:coloredconfigmodel}

{Fix a simple graph $G$ and write $(r_G,y_G,g_G)$ for the triple which lists the number of red, yellow and green edges of $J(G)$. 
If $m^{\neq 2}< \frac{m}{100}$ then in obtaining bounds on the conductance   we will use a {\em coloured switching operation}, described in the next  two paragraphs,   which 
switches between graphs in the family 
\[
\cF_{r,y,g}=\cF_{r,y,g}(\cD)=\{G\in \cG(\cD)~:~(r_G,y_G,g_G)=(r,y,g)\}
\]}
 for a given $r, y, g$.

Recall from Section~\ref{sec:configmodel} that for a degree sequence $\cD=(d_1,\ldots,d_n)$, $\cM(\cD)$ is the set of matchings generating simple graphs with degree sequence $\cD$.  Starting from any matching $M \in \cM(\cD)$, letting $G$ be the graph corresponding to $M$, we then define a matching $M^*$ of the (uncoloured) halfedges incident to vertices of the multigraph $J(G)$, as follows. If edge $e=uv$ of $J$ corresponds to a path $P$ in $G$, then we represent $e$ as a matching of the unique pair of halfedges $hh'$ along $P$ that are incident to $u$ and $v$, respectively (so $h$ and $h'$ are paired in $M^*$). Note that $h$ and $h'$ are also paired in $M$ if and only if $e$ is red. Note also that the halfedges of $M^*$ are precisely those halfedges incident to vertices whose degree is not $2$. 
The (coloured) multigraph $J$ may be recovered from the matching $M^*$, together with the partition $\Pi=(\Pi_r,\Pi_y,\Pi_g)$ of the pairs of the matching into $r$ that are red, $y$ that are yellow and $g$ that are green in such a way that  there are no parallel edges which are both red, or loops which are red or yellow.

In {\em coloured switching}, we perform switching on the ordered paths of $G$ corresponding to the oriented edges of $J$ rather than on the  oriented edges of $G$ itself. The switching corresponding to a pair $xy,uv$ of oriented edges of $J$
is accomplished by (i) deleting the ordered  $x-y$ and $u-v$ paths   and  (ii) adding ordered $x-v$ and $u-y$ paths  
  so that the interior of the $x-v$  path is the interior of the old $x-y$  path and the interior of the 
  $u-y$ path is the interior of the old $u-v$ path. Thus, the colour of the new edge from $x$ to $v$ is the same as that of the deleted  edge from $x$ to $y$ and the colour of  the new edge between $u$ and $y$ is the same as the colour of the deleted edge between $u$ and $v$. We only consider switchings that preserve the property that the underlying graph is simple, i.e. we must maintain the property that there are no two parallel red edges and no yellow or red loop. Note that a coloured switching yields a new graph $G'$ with $(r_{G'},y_{G'},g_{G'})=(r_G,y_G,g_G)$; in other words, if $G \in \cF_{r,y,g}$ then also $G' \in \cF_{r,y,g}$. Finally we note that  a switching on the oriented 
  edges $xy$ and $uv$ of $J$ is equivalent to switching the oriented final edges of the corresponding 
  paths in $G$.  

  At certain points in the argument, we will restrict to switching only on the ordered paths of $G$ corresponding to the oriented edges of the kernel $K(G)$. Since $K(G)$ is the homeomorphic reduction of the core, these switchings work identically to those described in the preceding paragraph. For both the family of switchings described in this paragraph, and that described in the previous paragraph, we are actually carrying out switchings in $G$ and  the logic leading to the inequality \eqref{switchingineq} applies. It follows that that inequality can be applied to bound probabilities of $G$, of $J(G)$ or of $K(G)$ having given properties even when restricting the set of switchings we consider. 

We can carry out an exploration generating the multigraph $J(G)$ using the same approach we sketched for exploring $G$ in Section~\ref{sec:configmodel}. 
When exposing the edges out of a vertex $v$, we do so halfedge by halfedge. We will describe this exploration in more detail later; we first explain how to bound the typical lengths of the paths in $G$ corresponding to green edges of $J(G)$ when $G$ is a uniformly random element of $\cF_{r,y,g}$, which we will write as $G \in_u \cF_{r,y,g}$.

Having fixed $r,y,g$, 
we expose the internal (degree-$2$) vertices on paths corresponding to green edges as follows. In what follows, we write $N = n_2 - 2g - y$; so the number of vertices on the paths corresponding to green edges is $N+2g$. We arbitrarily order the  green edges and orient each of them. We reveal the indices  of the 
first and last vertex on each green path. 
To specify the remainder of the paths, 
we choose a random permutation of $g-1$ delimiters and the  $N$ remaining vertices of  
degree 2 lying on these paths.  We add a delimiter at the beginning and the end of the permutation.
The vertices between the $i^{\mathrm{th}}$ and $(i+1)^{\mathrm{st}}$ delimiter are placed on the $i^{\mathrm{th}}$ green edge in the given order.

Equivalently, we can think of drawing without replacement from a bin of $N$ vertices and $g-1$ delimiters; then the probability that we draw exactly $k$ vertices followed by a delimiter is  
\begin{align*}\frac{N}{N+g-1}\left(\frac{N-1}{N+g-2}\right) \cdots \left(\frac{N-k+1}{N+g-k}\right) \left(\frac{g-1}{N+g-1-k}\right) \\
= \frac{g-1}{N+g-k-1}
{\prod_{j=1}^k \left(1-\frac{g-1}{N+g-j}\right)}\ .
\end{align*}

We say the {\em length} of an edge of $J$ is the number of edges on the corresponding path in $G$. Using the perspective above, we characterize the distribution of edge lengths via the following.

\begin{prop}
\label{ballsinbins}
    Let $s, B \in \mathbb N$. Conditioned on the set of yellow, red, and green edges,
    for any set $S$ of green edges with $|S| = s$, the probability that 
    the sum of the lengths of the   edges in $S$ exceeds $2s+B$
    is at most $\p{\mathrm{Bin}(g-1, \frac{B}{N})<s}$.
\end{prop}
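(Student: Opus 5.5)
The plan is to analyse the delimiter model directly. Conditioned on the colours, the $N$ "extra" degree-two vertices and the $g-1$ delimiters are arranged in a uniformly random permutation. Green edge $i$ receives $2$ fixed internal vertices plus $L_i$ extra ones, where $L_i$ is the number of vertices between the $i$th and $(i+1)$st delimiters. Its length is therefore $3+L_i$ edges, or $2+L_i$ depending on convention. The statement effectively concerns the total number of extra vertices on the $s$ paths, and the additive constant per edge is absorbed into the $2s$ term. I will work with $\sum_{i\in S}L_i$ and show
\[
\p{\sum_{i\in S} L_i > B}\le \p{\mathrm{Bin}(g-1,B/N)<s}.
\]

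\textbf{Step 1: exchangeability.} The vector $(L_1,\dots,L_g)$ is uniform over compositions of $N$ into $g$ nonnegative parts, so it is exchangeable. Hence we may take $S$ to consist of the first $s$ green edges. Then $\sum_{i\le s}L_i$ is exactly the number of vertices that precede the $s$th delimiter in the permutation. (If $s=g$ the sum is $N$, and the bound is trivial when $B\ge N$ and is handled separately otherwise.)

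\textbf{Step 2: reformulation and coupling.} The event $\sum_{i\le s}L_i>B$ says that among the first $B$ vertices, in order of appearance, fewer than $s$ delimiters appear before the $(B+1)$st vertex. Reverse the viewpoint and let each delimiter independently of the others choose its position. In a uniform permutation of $N$ vertices and $g-1$ delimiters, each delimiter falls into one of the $N+1$ gaps between consecutive vertices. These gap labels are i.i.d.\ uniform on $\{0,\dots,N\}$, with ties ordered uniformly, and this still yields a uniform permutation. The number of delimiters landing in gaps $0,\dots,B-1$, that is before the $B$th vertex, is then exactly $\mathrm{Bin}(g-1,B/(N+1))$. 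The event $\sum_{i\le s} L_i> B$ implies that this count is less than $s$.

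\textbf{Step 3: matching the parameter.} The coupling gives $\mathrm{Bin}(g-1,B/(N+1))$, while the statement has $B/N$. I will adjust which event is counted, for instance using gaps $0,\dots,B$ (that is, $B+1$ gaps) when the strict inequality allows it. Alternatively, since $\p{\mathrm{Bin}(g-1,p)<s}$ is decreasing in $p$, I will check whether the off-by-one works in the right direction, using the slack from the fixed internal vertices in the "$2s$" term.

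The main obstacle is this bookkeeping: pinning down precisely which gap count corresponds to "length exceeds $2s+B$" so that the success probability is at least $B/N$. If the gap coupling proves awkward, the fallback is a direct argument that $\p{\sum_{i\le s}L_i> B}=\p{\text{fewer than } s \text{ delimiters among the first } B+s-1 \text{ items}}$. This is a hypergeometric tail, which is dominated by the corresponding binomial tail by Hoeffding's comparison of sampling without and with replacement. I would then bound the per-item delimiter proportion appropriately.
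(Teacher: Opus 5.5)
\textbf{Main gap: the coupling in Step~2 is false.} A uniform permutation of $N$ vertices and $g-1$ delimiters makes the delimiter pattern uniform over all $\binom{N+g-1}{g-1}$ binary strings. Equivalently, the occupancy vector of the $N+1$ gaps is uniform over compositions. Placing each delimiter independently and uniformly into a gap instead gives a multinomial occupancy. Already for $N=1$ and two delimiters, the pattern with one delimiter on each side of the vertex has probability $1/2$ under your scheme but $1/3$ under the uniform permutation. So the number of delimiters before a given vertex is not binomial in general. It has the same mean but is more dispersed, and that works against a lower-tail bound.

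\textbf{The fallback does not repair this.} Hoeffding's comparison of sampling with and without replacement only gives domination for convex functionals, such as moment generating functions. It does not give $\p{\mathrm{Hyp}<s}\le\p{\mathrm{Bin}<s}$. Moreover, the binomial it pairs with has parameter $\frac{B+s}{N+g-1}$, not $B/N$. Also, $\sum_{i\le s}L_i>B$ corresponds to the first $B+s$ items, not $B+s-1$.

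\textbf{Your reduction changes the event.} The paper defines length as the number of edges of the path, so a green edge has length $3+L_i$. ``Total length exceeds $2s+B$'' therefore means $\sum_{i\in S}L_i\ge B-s+1$, i.e.\ fewer than $s$ delimiters among the first $B$ items of the permutation. That is the reduction the paper makes, together with your Step~1. The per-edge constant is $3$, not $2$, so it is not absorbed into the $2s$ term.

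\textbf{Step~3 cannot be completed, because with parameter $B/N$ the inequality is false.} Take $s=1$ and expose the delimiter positions one at a time. This gives
$\p{\text{no delimiter among the first }B\text{ items}}=\prod_{i=1}^{g-1}\bigl(1-\frac{B}{N+g-i}\bigr)>\bigl(1-\frac BN\bigr)^{g-1}$
whenever $g\ge 2$ and $1\le B<N$. For example, $N=2$, $g=2$, $s=B=1$ gives $2/3$ against $1/2$. Your reformulated target fails too: with $N=10$, $g=101$, $s=B=1$ it gives $\frac{10\cdot 9}{110\cdot 109}\approx 7.5\cdot 10^{-3}$ against $0.9^{100}\approx 2.7\cdot 10^{-5}$.

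\textbf{The paper's own proof has a direction error.} It uses exactly this sequential exposure, but bounds the conditional probability that the $i$th delimiter lands in the first $B$ slots from \emph{above}, by $\frac{B}{N+g-i}\le\frac BN$. That makes $\mathrm{Bin}(g-1,B/N)$ stochastically larger than the delimiter count. It therefore bounds the probability of fewer than $s$ delimiters from below, which is the wrong direction.

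\textbf{What the exposure does give.} While fewer than $s$ earlier delimiters sit in the first $B$ slots, at least $B-s+1$ of the $N+g-i$ free slots lie there. So the conditional probability is at least $\frac{B-s+1}{N+g-1}$. The usual coupling then bounds the probability in question by $\p{\mathrm{Bin}\bigl(g-1,\frac{B-s+1}{N+g-1}\bigr)<s}$. That is the statement to aim for. It suffices for the use in Proposition~\ref{saulgoodman}: there $B\ge 2s$ and $N+g-1<n_2\le m$, so the binomial mean is at least half of what is used.
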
 
    \begin{proof}
    In the process we have described for determining the length 
    of the green edges, we can and do use the vertices of $S$ as the first $s$ edges.  Thus the probability that the first $s$ edges have total length greater than $2s+B$ is the 
    probability that fewer than $s$ delimiters are chosen in the first
    $B$ draws. We can determine when the delimiters are drawn by exposing the draw for each delimiter in turn, with all remaining draws
    equally likely. Conditioned on the previous choices, the probability that the  $i^{\mathrm{th}}$ delimiter is drawn in the first $B$ draws 
    is at most $\frac{B}{N+g-i}$. Since $\frac{B}{N+g-i} \leq \frac{B}{N}$, this proves the claim.
    \end{proof}

Using $g_i=g_i(G)$ to denote the number of green edges of $J(G)$ of length $i$,
the following proposition bounds the likely lengths of typical green edges, and also the number $g_3$ of green edges of length three. 

\begin{prop}
\label{greenpathsprop}
For any $0 < \delta \le \frac{1}{100}$ there is $n_0\in \N$  such that the following holds.  
Assume $\cD$  is a degree sequence of length $n \ge n_0$ such that $m^{\ne 2} \le \delta m$.  Fix $(r,y,g)$ with $g > 1$, $\cF_{r,y,g}(\cD)$ non-empty and let $G \in_u \cF_{r,y,g}$. Then for any $c>0$, conditional on  $J(G)$, the probability the path corresponding to a fixed green edge in $J(G)$ has at least $2+\frac{cN}{g-1}$ internal vertices is at most  $e^{-c/2}$. Moreover, $\p{g_3(G) \le \frac{10\delta}{9-36\delta}g} \geq 1-n^{-1/7}$.  
\end{prop}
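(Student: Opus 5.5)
The plan has two independent parts. Throughout, condition on $J(G)$ with its colouring, which fixes $r,y,g$. By the description preceding Proposition~\ref{ballsinbins}, the green paths are then determined by a uniform random interleaving of $N$ labelled degree-two vertices and $g-1$ delimiters; each green edge additionally receives its first and last internal vertices, so it has at least $2$ internal vertices.

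\textbf{First claim.} A fixed green edge has at least $2+\frac{cN}{g-1}$ internal vertices exactly when, in the draw-without-replacement process started with that edge, the first $k=\lceil cN/(g-1)\rceil$ draws are all vertices. Using the product formula above, this probability is
\[
\prod_{j=1}^{k}\Bigl(1-\frac{g-1}{N+g-j}\Bigr)\le \Bigl(1-\frac{g-1}{N+g-1}\Bigr)^{k}\le \exp\Bigl(-\frac{(g-1)k}{N+g-1}\Bigr).
\]
We then need $N+g-1\le 2N$, i.e.\ $g-1\le N$. This follows from $m^{\ne 2}\le\delta m$ with $\delta\le 1/100$. Indeed, $n_2=m-m^{\ne2}\ge 99m^{\ne2}$ and $g\le m^{\ne2}$, so $N=n_2-2g-y\ge n_2-3m^{\ne2}\ge 96 m^{\ne2}\ge g$. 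Hence the bound is at most $e^{-c/2}$. Alternatively, Proposition~\ref{ballsinbins} with $s=1$ gives the same bound via $\p{\mathrm{Bin}(g-1,B/N)=0}$.

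\textbf{Second claim.} A green edge of length three has exactly $2$ internal vertices, so it contributes zero vertices from the pool of $N$. Therefore $g_3$ is the number of empty gaps among the $g$ gaps determined by the $g-1$ delimiters, and its law does not depend on $J$. By exchangeability, $\p{\text{gap } i \text{ empty}}$ is roughly $\frac{g-1}{N+g-1}$. Summing, $\E{g_3}\le \frac{g(g-1)}{N+g-1}\le \frac{g^2}{N}$.

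Next I bound $g/N$ by $\delta$. From $g\le m^{\ne2}\le \delta m$ and $N\ge n_2-3m^{\ne2}\ge m-4m^{\ne2}\ge (1-4\delta)m$, we get $g/N\le \frac{\delta}{1-4\delta}$. Thus $\E{g_3}\le \frac{\delta}{1-4\delta}g$, which is $\frac{9}{10}$ of the target $\frac{10\delta}{9-36\delta}g$. The constant $\frac{10}{9}$ in the statement is exactly the slack needed for a concentration step.

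\textbf{Concentration.} I would show $\mathrm{Var}(g_3)$ is small. The empty-gap indicators in a uniform interleaving (occupancy-type) are negatively correlated, so $\mathrm{Var}(g_3)\le \E{g_3}$. Chebyshev then gives
\[
\p{g_3>\tfrac{10}{9}\cdot\tfrac{\delta}{1-4\delta}g}\le \frac{81(1-4\delta)}{\delta g},
\]
which is $O(1/g)$.

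\textbf{Main obstacle.} This bound is only useful when $g\ge n^{1/7}$ (up to constants); small $g$ needs a separate argument. If $g$ is small, say $g\le n^{1/7}$, then $N\ge(1-4\delta)m\ge n/3$, since $m\ge n/2$ when $n_0=0$. A union bound gives $\p{g_3\ge1}\le g\cdot\frac{g}{N}\le 3n^{-5/7}$. Also $\frac{10\delta}{9-36\delta}g$ may be below $1$, in which case the event $g_3\le$ target is just $g_3=0$, and this union bound covers it. For $g\ge n^{1/7}$, I expect to need a tail bound stronger than Chebyshev's $O(1/g)$ to reach $n^{-1/7}$. I would therefore first try a Chernoff bound via negative association of the empty-gap indicators, which yields $\exp(-\Omega(\delta g))\le n^{-1/7}$ for large $n$.

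Finally, since each bound holds conditionally on every $J$, the statements transfer to $G\in_u\cF_{r,y,g}$.
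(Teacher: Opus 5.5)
Your first claim and the computation $\E{g_3}=\frac{g(g-1)}{N+g-1}\le\frac{\delta}{1-4\delta}g$ match the paper's proof. The paper uses $N+g\le\frac{97}{96}N$ where you use $N+g-1\le 2N$. The concentration step is where you genuinely differ, and your argument is correct in outline.

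\emph{The paper's route.} It never uses negative correlation. It splits at $g=4N^{2/3}$. Below the split it applies Markov, using $\E{g_3}\le g^2/N<4g/N^{1/3}$. Above the split it bounds the probability that two given green paths both have length three by $(1+\frac4N)p^2$, where $p=\frac{g-1}{N+g-1}$. This gives $\mathrm{Var}(g_3)\le \frac4N\E{g_3}^2+\E{g_3}$, and since $\E{g_3}\ge n^{1/3}$ there, Chebyshev gives $O(n^{-1/3})$.

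\emph{Your route, and how to finish it.} Your negative-correlation observation is sharper than the paper's bound. Justify it by the exact formula rather than an occupancy analogy: two given gaps are both empty with probability $\frac{(g-1)(g-2)}{(N+g-1)(N+g-2)}<p^2$, hence $\mathrm{Var}(g_3)\le\E{g_3}$.

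With that in hand, your ``main obstacle'' disappears. You discarded too much in the Chebyshev numerator. Write $\beta=\frac{\delta}{1-4\delta}$ and keep $\E{g_3}\le g^2/N$. Then
$\p{g_3>\frac{10}{9}\beta g}\le\frac{81\,\E{g_3}}{\beta^2g^2}\le\frac{81}{\beta^2N}=O(\delta^{-2}n^{-1})$,
uniformly in $g$. So you need no split at $n^{1/7}$ and no Chernoff bound, and your error probability is much smaller than the paper's.

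This matters, because as written your large-$g$ case rests on an unproved negative-association claim. The gap sizes form a uniform weak composition (i.i.d.\ geometrics conditioned on their sum), not a multinomial occupancy vector. The standard balls-in-bins result therefore does not apply directly; you would need something like the Joag-Dev--Proschan theorem for log-concave variables conditioned on their sum.

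\emph{The aside via Proposition~\ref{ballsinbins}.} Drop it. The number of delimiters among the first $B$ draws is hypergeometric with mean $\frac{(g-1)B}{N+g-1}<\frac{(g-1)B}{N}$, so it does not literally dominate $\mathrm{Bin}(g-1,B/N)$. Already for $g=2$, $s=1$ the claimed inequality fails, since $1-\frac{B}{N+1}>1-\frac{B}{N}$. Your direct product bound is the right argument for the first claim.
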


We will apply this proposition to the two cases of $\delta = \frac{1}{100}$ and $\delta = \mu^3$ where $\mu < \frac{1}{12}$ is small enough in terms of $\rho$. In the former case, we can conclude that $g_3 \leq \frac{g}{80}$ whp and in the latter case, $g_3 \leq \frac{\mu^2m^{\neq 2}}{3}$ whp.

\begin{proof}
Observe that if $m - n_2 = m^{\neq 2} \leq \delta m$, then 
\begin{itemize}
    \item $n_2 \geq (1-\delta)m \geq (1/\delta - 1)m^{\neq 2}$, 
    \item $y \leq m^{\neq 2} \leq \delta m$, and
    \item $g \leq m^{\neq 2} \leq \delta m$.
\end{itemize}

We thus have
\begin{align*} N = n_2 - 2g - y &\geq \min\{(1-4\delta) m, (1/\delta - 4)m^{\neq 2}\}\\
&\geq \min\{(1-4\delta) m,(1/\delta - 4)g\}\\
&\ge \min\left\{\frac{96m}{100}, 96g\right\}\ .
\end{align*}

Now, by symmetry we can replace any fixed green edge by the first green edge in the permutation. 
The probability that this edge has at least $k+2$ internal vertices is 
\[ \prod_{j=1}^k\left(1-\frac{g-1}{N+g-j}\right)
\le  \left(1-\frac{g-1}{N+g}\right)^{k}
\le e^{-k(g-1)/(N+g)} \le e^{-96k(g-1)/(97N)}. 
\]
It  follows that the probability this green edge  has at least $2+\frac{cN}{g-1}$ internal vertices is  
at most $e^{-c/2}$.

The probability a green edge has exactly 2 internal vertices is $\frac{g-1}{N+g-1} < \frac{g}{N}$. Thus $\E{g_3} = \frac{g(g-1)}{N+g-1} \le \frac{\delta}{1-4\delta}g$. 

If $g<4N^{2/3}$  then  since $N \ge \frac{96m}{100}>\frac{n}{3}$
and $\E{g_3}<\frac{4g}{N^{1/3}}$, for large enough $n_0$,
 Markov's inequality tells us that $\p{g_3<\frac{10\delta}{9-36\delta}g} \geq 1-n^{-1/7}$.  Otherwise, $g \geq 4N^{2/3}$ and hence $\e[g_3]>4N^{1/3}>n^{1/3}$. Thus,  $\e[g_3]<\frac{\e[g_3]^2}{n^{1/3}}$. 

We now compute the expected number of ordered pairs of green paths both of which have two internal vertices. 
By symmetry, this is $g(g-1)p+\e[g_3]$ where $p$ is the probability the 1st and 2nd paths of the permutation each have two internal vertices. So, the first term of our sum is 
$$g(g-1) \frac{g-1}{N+g-1} \cdot \frac{g-2}{N+g-2}$$

 Using that $g \ge 2$ and $N \ge (1/\delta - 4)g > 4$, it follows straightforwardly that
$\frac{g-2}{N+g-2} \leq \left(1 + \frac{4}{N}\right) \frac{g-1}{N+g-1}$, so the preceding expression is at most 
$$g(g-1)\left(\frac{g-1}{N+g-1}\right)^2 \left(1 + \frac{4}{N}\right) \le (1+\frac{4}{N})\e[g_3]^2\ .$$
Hence, since $N > n/3$, we have $\e[g^2_3] \le (1+\frac{4}{N})\e[g_3]^2 + \frac{\e[g_3]^2}{n^{1/3}} \leq \left(1+\frac{2}{n^{1/3}}\right)\e[g_3]^2$ and the second moment method  gives us the claim.
\end{proof}

The following proposition concerns the distribution of red, yellow, and green edges.
\begin{prop}
\label{redandyellowpathsprop}
    Fix $\delta \in (0,1/100)$. If $m^{\neq 2} \leq \delta m$ then, letting $r, y, g$ be the number of red, yellow, and green edges, respectively, with high probability as $m \to \infty$ the following hold:
    \begin{enumerate}
        \item $r \leq \frac{3\delta}{1-4\delta} m^{\neq 2}$, 
        \item $y \leq \frac{3\delta}{1-4\delta} m^{\neq 2}$, and
        \item $g \geq \frac{1-8\delta}{1-4\delta}m^{\neq2}.$
    \end{enumerate}
\end{prop}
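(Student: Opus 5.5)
Since $r+y+g=m^{\neq 2}$, part (3) follows from (1) and (2): $g \ge m^{\neq 2}(1-\frac{6\delta}{1-4\delta}) = \frac{1-10\delta}{1-4\delta}m^{\neq 2}$. That is slightly weaker than the stated $\frac{1-8\delta}{1-4\delta}$, so I will prove the sharper combined bound $r+y \le \frac{4\delta}{1-4\delta}m^{\neq 2}$ directly, together with the two separate bounds (1) and (2). The tool is the switching inequality \eqref{switchingineq}, in the telescoping form sketched in the introduction. I condition on the homeomorphic reduction $J$ as an uncoloured multigraph and on the multiset of path lengths, and switch between colour classes.

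For (1) I compare $\cF_{r,y,g}$ with $\cF_{r-1,y,g+1}$. Given $G\in\cF_{r,y,g}$, pick a red edge $uv$ of $J$. Take two degree-two vertices from the interior of a green path of length at least $4$ (one with at least three internal vertices) and insert them into $uv$. The donor stays green and $uv$ becomes green, so we land in $\cF_{r-1,y,g+1}$.

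In the forward direction the number of choices is at least about $r\cdot(N - O(g))$. Proposition~\ref{greenpathsprop} guarantees that most of the $N+2g\ge(1-4\delta)m$ green-internal vertices lie on long green paths, and inserting consecutive pairs gives roughly $N$ options.

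In the reverse direction, from $\cF_{r-1,y,g+1}$ we must choose a green edge with exactly two internal vertices and remove them, which gives at most $g_3+O(1)$ choices. We then reinsert these two vertices as adjacent vertices in some green path, which gives at most $N+2g+O(1)$ positions.

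The ratio is therefore $|\cF_{r,y,g}|/|\cF_{r-1,y,g+1}| \lesssim \frac{(g_3+1)(N+2g)}{rN}$. By Proposition~\ref{greenpathsprop}, $g_3\le \frac{10\delta}{9-36\delta}g$ except with probability $n^{-1/7}$, and the atypical configurations can be discarded. So once $r$ exceeds $\frac{3\delta}{1-4\delta}m^{\neq 2}$, the ratio is at most a constant $C<1$, and telescoping over $r$ shows that the total mass above the threshold is $o(1)$. One must also check that the insertion preserves simplicity. It does, because the new edge is green and green edges may be parallel or loops.

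For (2) the same argument applies with yellow edges: insert one degree-two vertex into a yellow path. For the reverse step we need the count of green edges of length $3$, removing one internal vertex to make a yellow edge. This again uses the $g_3$ bound, with $N/(N+2g)\ge 1-O(\delta)$. The constant $\frac{3\delta}{1-4\delta}$ comes from $\E{g_3}\le\frac{\delta}{1-4\delta}g$ together with the slack factor $3$.

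The main obstacle is the bookkeeping in the reverse count. Moving vertices between paths changes the distribution of path lengths, so the sets being compared must be defined carefully, for instance as pairs (graph, marked insertion) weighted appropriately. We also need $g_3$ to be controlled uniformly along the whole telescoping chain, not just at typical $(r,y,g)$.

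I would handle this by conditioning on $(r,y,g)$ and applying the second statement of Proposition~\ref{greenpathsprop} within each class. I would then take a union bound over the polynomially many classes actually reached, in a window where the ratio bound is at most $1/2$. Finally I would combine (1) and (2) and use $r+y+g=m^{\neq 2}$ to obtain (3).
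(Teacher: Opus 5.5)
Your local move is the paper's: move two degree-two vertices from long green paths into a red edge, or one into a yellow edge, and compare with $g_3$. However, two steps do not go through as written.

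The first is part (3). You correctly observe that (1) and (2) only give $g\ge\frac{1-10\delta}{1-4\delta}m^{\neq 2}$. You then announce a direct proof of $r+y\le\frac{4\delta}{1-4\delta}m^{\neq 2}$, but that proof never appears. Your last step again just combines (1) and (2), which by your own arithmetic does not yield (3). (The paper's proof has the same slip: it simply asserts that (3) follows from (1) and (2).) The repair is already inside your argument. With $c=\frac{10\delta}{9-36\delta}$ and $N\ge 96g$, the ratio $\frac{(cg+2)(N+3g)}{r(N-2g)}$ is at most $0.65$ for large $m$ once $r\ge\frac{1.9\delta}{1-4\delta}m^{\neq 2}$. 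Telescoping therefore gives $r\le\frac{2\delta}{1-4\delta}m^{\neq 2}$ whp, and likewise for $y$. Then (3) follows from $r+y+g=m^{\neq 2}$.

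The second is the control of $g_3$. The switching inequality needs a reverse-degree bound valid for \emph{every} graph on the receiving side. Proposition~\ref{greenpathsprop} only bounds $g_3$ on a $1-n^{-1/7}$ fraction of each class, and the exceptional graphs can have $g_3$ of order $g$. No union bound over classes addresses this; what works is an asymmetric double count.
\begin{enumerate}
\item Take donor green paths with at least four internal vertices. A donor with exactly three becomes yellow after losing two, contrary to what you wrote.
\item The forward count then needs no probability. At most $3g$ green-internal vertices lie on green paths with at most three internal vertices, leaving at least $N-2g$ usable consecutive pairs.
\item The move raises $g_3$ by at most $2$. So if $\cF^{\mathrm{good}}_{r,y,g}$ denotes the graphs in $\cF_{r,y,g}$ with $g_3\le cg$, every image of a good graph has $g_3\le cg+2$. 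Hence
\[
|\cF^{\mathrm{good}}_{r,y,g}|\cdot 2r(N-2g)\le |\cF_{r-1,y,g+1}|\cdot 2(cg+2)(N+3g).
\]
\item Proposition~\ref{greenpathsprop}, applied inside the single class, gives $|\cF_{r,y,g}|\le(1-n^{-1/7})^{-1}|\cF^{\mathrm{good}}_{r,y,g}|$. This yields a ratio bound valid for every $r$ and makes the telescoping rigorous.
\end{enumerate}
The paper avoids the issue differently: it indexes its families by the number of red edges relative to $2g_3$, so the reverse count is bounded in terms of $r$ itself, and Proposition~\ref{greenpathsprop} is invoked only once at the end.

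One further slip: you cannot condition on the multiset of path lengths, since that fixes $r$ and $y$. Condition only on $J^-$, which your moves preserve.
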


\begin{proof}
We first bound the probability that the number of red edges exceeds the claimed bounds. 
By the preceding proposition, and using the standing assumption that $m^{\neq 2} = \omega(1)$, it is enough to show that $\p{r>2g_3+2\sqrt{m^{\neq 2}}+1}$ is $o(1)$. 

If $m^{\neq 2} \leq \delta m$, then as in the previous proof $n_2 \geq \left(\frac{1}{\delta} - 1\right)m^{\neq 2}$, so there are at least $\left(1 - \frac{4\delta}{1-\delta}\right)n_2$ degree two vertices on paths corresponding to (green) edges with at least 5 internal vertices.

Next, for integer $i \ge 0$, we consider 
an auxiliary bipartite graph between the set ${\cal F}_{i+1}$ of coloured multigraphs with degree sequence $\cD_n^{\ne 2}$ with $2g_3+i+1$ red edges and the
set ${\cal F}_i$ of those 
with $2g_3+i$ red edges. We place an edge between a multigraph of the former type and one of the latter 
if the second can obtained from the first by suppressing two of the vertices on green paths of length at least 6, then subdividing a red edge twice. 
Then the degree of an element in $\cal F_{i+1}$ is the number of ways to choose two such vertices on green edges of length at least 6 along with a red edge; by our earlier observation this is at least $(2g_3 + i + 1)((1 - \frac{4\delta}{1-\delta})n_2)((1 - \frac{4\delta}{1-\delta})n_2-1)$.
On the other hand, the degree of an element in $\cal F_i$ is the number of ways to choose a green edge of length at least $4$ and two edges on paths of length at least 4; we upper-bound the latter simply by the total number of edges and thus obtain $\Delta(\cal F_i) \leq g_3(n_2 + m^{\neq 2})^2$. 
By \eqref{switchingineq} and the assumption that $\delta \leq \frac{1}{100}$, we have
$\frac{|{\cal F_{i+1}}|}{|{\cal F}_i|} \le \frac{2}{3}$. 
Since $m^{\ne 2}=\omega(1)$, 
it follows that with high probability  $r \le 2g_3+2\sqrt{m^{\neq 2}}+1$, so (1) holds. 

A similar argument yields that (2) holds. We note (3) holds if (1) and (2) do. 
\end{proof}

It follows from the preceding proposition that when $\mu^3 m \leq m^{\neq 2} \le \frac{m}{100}$, whp as $m\to \infty$ we have
\begin{equation}\label{condition-ryg-bigger}
r \le \frac{m^{\neq 2}}{20},\quad 
y \le \frac{m^{\neq 2}}{20},\ \ \text{ and }\ \ 
g \ge \frac{4m^{\neq 2}}{5}\,.
\end{equation}

We can say much more when $m^{\neq 2} \le \mu^3 m$.
Given two random subsets $A,B$ of a ground set $\cX$, we say that $A$ stochastically dominates $B$ if $A$ and $B$ be can be coupled so that $B \subseteq A$. Also, we write $J^-=J^-(G)$ for the underlying uncoloured graph corresponding to $J(G)$, and for $e \in E(J(G))$, we write $J^{e-}$ for the partially coloured graph obtained from $J$ by uncolouring edge $e$.
\begin{prop}\label{specificedgeredoryellowprop}
    For all sufficiently small $\mu$, if $m^{\ne 2} \leq \mu^3 m$, then conditionally given $J^-$, the set of green edges of $J$ stochastically dominates a $\mathrm{Bin}(E(J),1-\frac{\mu^2}{3})$ subset of $E(J)$. 
\end{prop}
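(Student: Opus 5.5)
We condition on the uncoloured multigraph $J^-$ and study the conditional law of the colouring. Given $J^-$, the graphs $G\in\cG(\cD)$ with $J(G)^-=J^-$ are in bijection with assignments of the $n_2$ degree-two vertices, in order, to the edges of $J^-$, subject to simplicity: no loop gets fewer than two internal vertices, and at most one edge in each parallel class gets zero internal vertices. Since $G$ is uniform, this assignment is uniform among the valid ones.

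The plan is to expose the edges $e_1,\dots,e_{m^{\neq 2}}$ of $J^-$ one at a time in a fixed order. For each $e_i$ we will show that, conditional on the colours of $e_1,\dots,e_{i-1}$ (indeed on any information about the other edges), $e_i$ is green with probability at least $1-\frac{\mu^2}{3}$. A sequential coupling with independent uniforms $U_i$ then gives the domination: declare $e_i$ in the binomial set iff $U_i\le 1-\mu^2/3$, and realise the colour of $e_i$ so that green holds whenever $U_i$ falls below the conditional probability of green.

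The per-edge bound will come from the switching inequality \eqref{switchingineq}, run inside the conditioned family. Fix $e=e_i$ and the colours of all other edges. Let $\cA$ be the set of configurations in which $e$ is red or yellow, and $\cB$ the set in which $e$ is green with at least two internal vertices.

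\begin{itemize}
\item[$\cA\to\cB$:] Take a degree-two vertex lying on a green path with at least $5$ internal vertices, suppress it, and insert it into $e$. Do this once if $e$ is yellow and twice if $e$ is red. This keeps every other edge's colour, and simplicity is preserved because $e$ only becomes longer. By Proposition~\ref{redandyellowpathsprop} and the calculation in its proof, about $(1-O(\mu^3))n_2$ vertices qualify, so there are at least $c\,n_2$ switches (respectively $c\,n_2^2$ for red).
\item[$\cB\to\cA$:] The reverse move removes internal vertices of $e$ and reinserts them into other green paths. It is constrained to yield a configuration in which $e$ is red or yellow, i.e.\ $e$ must have exactly $2$ or $3$ internal vertices, with the others put back at one of at most $n_2+m^{\neq 2}$ positions.
\end{itemize}

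As written this is not a bijection-respecting count, so the cleaner choice is to compare $\cA$ with the set $\cB_3$ of configurations in which $e$ has exactly two or three internal vertices. Moving vertices into or out of positions among $\le n_2+m^{\neq2}$ slots gives $|\cA|/|\cB_3| = O((n_2+m^{\neq 2})/n_2)\cdot$(the ratio for a single inserted vertex). That ratio is bounded by the probability mass computation as in Proposition~\ref{greenpathsprop}: $e$ has few internal vertices with probability $O(g/N)=O(m^{\neq2}/m)=O(\mu^3)$. For $\mu$ small this is below $\mu^2/3$.

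The main obstacle is to make this conditional statement hold uniformly over all conditioning on other edges' colours, since the conditioning changes the pool of available vertices. I would handle it by observing that $N\ge (1-4\mu^3)m$ regardless of the other colours, given $m^{\neq 2}\le\mu^3 m$. Hence the delimiter computation preceding Proposition~\ref{ballsinbins} bounds $\p{e \text{ has} \le 1 \text{ internal vertex}\mid\text{rest}}$ by $O(\mu^3)$ uniformly. Parallel-edge and loop constraints only forbid red or yellow outcomes, so they can only increase the green probability, and the bound survives.
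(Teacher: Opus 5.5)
Your argument works, but it takes a different route from the paper's. The paper does not use the delimiter law for this proposition. Conditioning on $J^{e-}$, it compares $\cF_i$ and $\cF_{i+1}$ (where $e$ has length $i$, resp.\ $i+1$) by moving one vertex from a green path of length at least $6$ onto the end of $e$. This gives $|\cF_{i+1}|/|\cF_i|>1-\mu^2/12$ for every $i$ up to order $\mu^2 m$. So the law of the length of $e$ is nearly flat over a long range, and lengths $1$ and $2$ (red and yellow) together carry mass below $\mu^2/3$.

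You instead make a single comparison. You insert one or two donor vertices into a yellow or red $e$, which lands in the set $\cB_2$ of configurations where $e$ has exactly two internal vertices. Both moves land there, so your ``two or three'' can just be ``two''. You then read off $|\cB_2|/|\cB|$ from the exact conditional law of green lengths given the colour classes. This is shorter and gives the sharper bound $O(\mu^3)$. The cost is that it relies on the delimiter description preceding Proposition~\ref{ballsinbins} being exact under your conditioning; the paper's chain is a pure switching argument.

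You should, however, state how the two pieces combine, because your last paragraph attributes the bound to the wrong ingredient. The delimiter process is defined only after conditioning on $e$ being green. So by itself it cannot bound $\p{e \text{ has at most one internal vertex} \mid \text{rest}}$. What it gives is $|\cB_2|/|\cB| = \frac{g-1}{N+g-1}\le \frac{\mu^3}{1-4\mu^3}$.

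Conversely, $|\cA|/|\cB_2|$ is not small. For yellow there are at least $2(n_2-4m^{\neq 2})$ switches forward and at most $2m$ back. For red there are about $(n_2-4m^{\neq 2})^2$ forward and at most $m^2$ back. Since $n_2-4m^{\neq 2}\ge(1-5\mu^3)m$, this gives $|\cA|\le(2+O(\mu^3))|\cB_2|$.

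Multiplying the two bounds gives $\p{e\in\cA\mid \text{rest}}\le |\cA|/|\cB| = O(\mu^3)<\mu^2/3$ for $\mu$ small. This holds uniformly in the conditioning, since both $N$ and the donor count are bounded below deterministically. With that correction, your sequential coupling finishes the proof exactly as you describe.
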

\begin{proof}
    It suffices to prove that for all $e \in E(G)$, given $J^{e-}$, the conditional probability that $e$ is green is at least $1-\mu^2/3$. So, let ${\mathcal F}_i$ be the family of 
    graphs $G$ for which $J^{e-}$ is the given partially coloured graph and for which $e$ has length exactly $i$.

    We consider an auxiliary bipartite graph between ${\cal F}_{i+1}$ and ${\cal F}_i$ where an edge joins an element of $\cF_{i+1}$ to an element of $\cF_i$ if the former can be obtained from the latter by suppressing a vertex on a green path with length at least 6 and adding it at the end of the path corresponding to $e$, thus increasing the length of $e$ by 1. 

    As noted earlier, since $n_2 \geq \left(\frac{1}{\delta} - 1\right)m^{\neq 2}$ where $\delta = \mu^3$, there are at least $\left(1 - \frac{4\delta}{1-\delta}\right)n_2 = \left(\frac{1-5\mu^3}{1-\mu^3}\right)n_2 \geq (1-5\mu^3)m$ degree two vertices on paths corresponding to (green) edges of length at least six, and so $\delta(\cF_i) \geq (1-5\mu^3)m - i$. On the other hand, there are at most $m$ edges which we could choose to subdivide so $\Delta(\cF_{i+1}) \leq m$.

    So, for $i < \mu^2 m (\frac{1}{12} - 5\mu)$ (note that this is positive as long as we assume $\mu < \frac{1}{60}$), we have $\frac{|{\cal F}_{i+1}|}{|{\cal F}_i|} > 1-\frac{\mu^2}{12}$. It follows that $\frac{|\cF_2|}{\sum_{i=2}^{k} |\cF_i|} < \frac{\mu^2}{6}$ and so $\p{\cF_1 \cup \cF_2} < \frac{\mu^2}{3}$.
\end{proof}

The following allows us to show the giant components of  $G$  are contained in the union of the large components of $J$.

\begin{prop}
    \label{sumoflengthsprop}
    Fix $\delta \in (0,1/100)$. If $m^{\neq 2} \leq \delta m$ then  for any function $f(x)$ which goes to infinity as $x \to \infty$, whp the number of 
    vertices on the paths of $G$ corresponding to any family  of at most  $\frac{m^{\neq 2}}{f(m^{\neq 2})^{2}}$ edges of $J$ is at most $\frac{m}{f(m^{\neq 2})}$. 
\end{prop}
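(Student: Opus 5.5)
Write $k = m^{\neq 2}/f(m^{\neq 2})^2$ for the allowed family size. Fix $\delta\in(0,1/100)$ and $f$ with $f(x)\to\infty$; we may assume $f(x)\le x^{1/4}$, say, since shrinking $f$ only weakens the statement.

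The plan is to condition on the coloured reduction $J(G)$, i.e.\ on $(r,y,g)$ and the partition into red, yellow and green edges, and to use the delimiter description of green path lengths. Red and yellow edges contribute at most $2$ vertices each besides endpoints. A family of at most $k$ edges therefore contributes at most
\[
2k + (\text{number of vertices on its green edges}).
\]
The first term is at most $2m^{\neq 2}/f^2 \le 2\delta m/f^2$, which is $o(m/f)$. So the task is to bound the maximum, over sets $S$ of at most $k$ green edges, of the total number of internal vertices on the paths of $S$.

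\textbf{Step 1: fix $(r,y,g)$.} By Proposition~\ref{redandyellowpathsprop}, whp $g\ge \frac{1-8\delta}{1-4\delta}m^{\neq 2}\ge m^{\neq2}/2$. Also $N=n_2-2g-y\le m$. Condition on such a triple.

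\textbf{Step 2: control the lengths.} The green lengths are given by placing $N$ balls among $g$ slots via a uniform permutation with $g-1$ delimiters. The worst family $S$ is the set of the $s\le k$ longest green paths. So it suffices to show that whp the sum of the $k$ largest gap lengths is at most $m/(2f)$.

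The cleanest route is to threshold. Let $L=\frac{cN}{g-1}\log f$, where $\frac{N}{g-1}\le \frac{2m}{m^{\neq 2}}$ is the mean gap. Split each green path's internal vertex count as $\min(\ell_e,L)+(\ell_e-L)^+$.
\begin{itemize}
\item The truncated parts of $k$ edges sum to at most $kL\lesssim \frac{m^{\neq2}}{f^2}\cdot\frac{m\log f}{m^{\neq 2}}=O(m\log f/f^2)=o(m/f)$.
\item For the excess, Proposition~\ref{greenpathsprop} gives, for each edge, $\p{\ell_e\ge 2+tN/(g-1)}\le e^{-t/2}$. Hence
\[
\E{(\ell_e-L)^+}\le \frac{N}{g-1}\,O\!\left(e^{-c\log f/2}\right)=\frac{N}{g-1}\,O\!\left(f^{-c/2}\right).
\]
Summing over all $g$ green edges gives $\E{\sum_e(\ell_e-L)^+}=O(Nf^{-c/2})=O(mf^{-2})$ for $c=4$. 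Markov's inequality then gives $\sum_e(\ell_e-L)^+\le m/(4f)$ with probability $1-O(1/f)=1-o(1)$.
\end{itemize}
This bounds the excess over all edges simultaneously, and hence over any family.

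\textbf{Step 3: combine.} Whp every family of at most $k$ edges of $J$ covers at most $o(m/f)+m/(4f)+o(m/f)\le m/f$ vertices. Unconditioning over $J$ and the colour triple preserves "whp", since the bounds are uniform in the conditioning.

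\textbf{Main obstacle.} The delicate point is the tail estimate in Step 2. The bound from Proposition~\ref{greenpathsprop} is stated for a fixed edge conditional on $J$. I use it only through linearity of expectation, so no independence between edges is needed, and the truncation level is chosen so that $kL$ stays $o(m/f)$. If more room were needed, I would instead apply Proposition~\ref{ballsinbins} with $B=m/(2f)$ and $s=k$. That gives the bound $\p{\mathrm{Bin}(g-1,B/N)<k}$, where the binomial mean is at least $g/(2f)\gg k$ (using $f\to\infty$), and a Chernoff bound makes this tiny. However, a union bound over all $\binom{g}{k}$ choices of $S$ would need care, which is why I prefer the truncation argument.
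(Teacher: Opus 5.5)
Your argument is correct and is essentially the paper's proof. Both truncate the green path lengths at a threshold, bound the long parts in expectation via Proposition~\ref{greenpathsprop} plus Markov's inequality, and bound the short parts by $k$ times the threshold. The paper thresholds at $\frac{f(m^{\neq 2})m}{2m^{\neq 2}}$ and counts the full length of the long paths, whereas you use a $\Theta(\log f)$ multiple of the mean gap and count only the excess. One small correction: your opening claim that shrinking $f$ only weakens the statement is false. Replacing $f$ by a smaller function enlarges both the admissible family size and the target bound $m/f$, so neither version implies the other. You never use $f(x)\le x^{1/4}$, however, so that remark can simply be deleted.
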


\begin{proof}
We know $m^{\neq 2}$ is $\omega(1)$ and hence so is $g$. 
Thus for large 
$n$, by Proposition~\ref{redandyellowpathsprop} we have $\frac{N}{g-1} \le (\frac{5}{4}+o(1))\frac{m}{m^{\neq 2}}$. 
Applying Proposition \ref{greenpathsprop}, the expected number of vertices on paths of length exceeding $\frac{f(m^{\neq 2})m}{2m^{\neq 2}}$ is $o(\frac{m}{f(m^{\neq 2})})$ and hence whp the sum of the lengths of these paths is $o(\frac{m}{f(m^{\neq 2})})$. On the other hand, for a set of $\frac{m^{\neq 2}}{f(m^{\neq 2})^2}$ edges in $J$, the number of vertices on paths of length less than $\frac{f(m^{\neq 2})m}{2m^{\neq 2}}$ is at most $\left(\frac{m^{\neq 2}}{f(m^{\neq 2})^2}\right)\left(\frac{f(m^{\neq 2})m}{2m^{\neq 2}}\right) = \frac{m}{2f(m^{\neq 2})}$. Putting these together gives the claim. 
\end{proof}

Lastly, we give an upper bound on the overall number of short edges and the density of subgraphs with many edges. 
\begin{prop}
    \label{sumoflengthsprop2}
    Fix $\delta \in (0,1/100)$. If $m^{\neq 2} \leq \delta m$ then 
    whp 
    \begin{enumerate}[(i)]
    \item the number of edges of $J$  which have length less than $\frac{m}{m^{\neq 2}   \log \log \log m}$ is less than $\frac{2m^{\neq 2}}{\sqrt{\log \log \log m}}$, and \item for every constant $A$,  every set of 
    more than $\frac{m^{\neq 2}}{A}$ edges of $J$ contains more than  
    $\frac{m}{2A   \log \log \log m}$ vertices.  
    \end{enumerate}
\end{prop}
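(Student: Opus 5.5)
Both parts will come from the length distribution of green edges (Propositions~\ref{greenpathsprop} and~\ref{ballsinbins}), after conditioning on the colour counts $(r,y,g)$. First I condition on the event of Proposition~\ref{redandyellowpathsprop}. This gives $r,y\le \frac{3\delta}{1-4\delta}m^{\neq 2}$ and $g\ge \frac{1-8\delta}{1-4\delta}m^{\neq 2}$, hence $g\ge \frac{9}{10}m^{\neq 2}$. Since $n_2\ge(1-\delta)m$, we have $N=n_2-2g-y\ge \frac{96}{100}m$. So $\frac{N}{g-1}\ge (1-o(1))\frac{m}{m^{\neq 2}}$, and also $\frac{N}{g-1}=O(\frac{m}{m^{\neq 2}})$. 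Write $L=\frac{m}{m^{\neq 2}\log\log\log m}$ and $\omega=\log\log\log m$.

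\textbf{Part (i).} The short edges are the red and yellow edges together with the green edges of length less than $L$. There are at most $\frac{6\delta}{1-4\delta}m^{\neq 2}$ red and yellow edges. This is not small compared with $m^{\neq 2}/\sqrt{\omega}$ when $\delta$ is a fixed constant, so the crude count is not enough. I would instead reuse the switching argument of Proposition~\ref{redandyellowpathsprop}: it shows whp that $r\le 2g_3+2\sqrt{m^{\neq 2}}+1$, and similarly for $y$. Together with $\E{g_3}\le g^2/N=O(m^{\neq 2}\cdot m^{\neq 2}/m)$, this reduces the problem to bounding green edges of length at most $L$.

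For a fixed green edge, the probability that its length is at most $L$ is at most $1-\prod_{j\le L}(1-\frac{g-1}{N+g-j})\le \frac{L(g-1)}{N}(1+o(1))=O(1/\omega)$. So the expected number of short green edges is $O(m^{\neq 2}/\omega)$. Markov's inequality then shows that this number is at most $m^{\neq 2}/\sqrt{\omega}$ with probability $1-O(\omega^{-1/2})=1-o(1)$. Note that $g_3\le 2g\cdot(\text{prob}\le L)$ is already covered by this estimate, so the red and yellow counts are also $O(m^{\neq 2}/\omega)+O(\sqrt{m^{\neq 2}})$ in expectation. Adding everything gives fewer than $\frac{2m^{\neq 2}}{\sqrt\omega}$ short edges whp, using $m^{\neq 2}=\omega(1)$ and, if necessary, $\sqrt{m^{\neq 2}}\le m^{\neq 2}/\sqrt{\omega}$ once $m^{\neq 2}$ is large compared with $\omega$.

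\textbf{Part (ii).} This is a deterministic consequence of (i). Take any set $S$ of more than $m^{\neq 2}/A$ edges of $J$. For large $m$, fewer than $\frac{2m^{\neq 2}}{\sqrt\omega}<\frac{m^{\neq 2}}{2A}$ edges of $J$ are short, so $S$ contains more than $\frac{m^{\neq 2}}{2A}$ edges of length at least $L$. Each such edge has at least $L-1$ internal vertices, and distinct edges have disjoint interiors. Hence $S$ spans more than $\frac{m^{\neq 2}}{2A}(L-1)\ge \frac{m}{2A\omega}(1-o(1))$ vertices. To remove the $1-o(1)$ factor, I would run (i) with threshold $2L$ in place of $L$; the same computation works, which gives the stated bound cleanly. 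Because (ii) is deterministic once (i) holds, a single high-probability event suffices for every constant $A$.

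\textbf{Main obstacle.} The hardest step is controlling the red and yellow edges in (i) with an $o(m^{\neq 2})$ bound rather than the constant-fraction bound of Proposition~\ref{redandyellowpathsprop}. I plan to handle it via the whp inequality $r,y\le 2g_3+O(\sqrt{m^{\neq 2}})$ from that proof. I also need $\E{g_3}=O(m^{\neq 2}/\omega)$, which holds because $\frac{g}{N}=O(\frac{m^{\neq 2}}{m})\le \frac{1}{\omega}$ whenever $m^{\neq 2}\le m/\omega$. In the regime $m^{\neq 2}>m/\omega$, $L<1$ and the claim (i) concerns only edges of length $0$, so the threshold degenerates. There I would choose the threshold more carefully, or note that "length less than $L$" is then vacuous.
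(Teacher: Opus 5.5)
Your proposal is essentially the paper's proof: (ii) is read off deterministically from (i), red and yellow edges are controlled by the switching behind Proposition~\ref{redandyellowpathsprop}, and short green edges are handled by the delimiter first-moment bound and Markov's inequality. You are also more careful than the paper at three points: the paper simply cites that proposition, while you correctly note that its fixed-$\delta$ conclusion is too weak by itself and that what is really used is $r,y\le 2g_3+O(\sqrt{m^{\neq 2}})$ together with $\E{g_3}=O((m^{\neq 2})^2/m)$; and your threshold-$2L$ device supplies the care hidden in the paper's ``(ii) follows immediately from (i)''.

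The loose ends you flag are equally present in the paper's argument. These are the additive $O(\sqrt{m^{\neq 2}})$ term when $m^{\neq 2}$ is not large compared with $\log\log\log m$, and the regime $L<1$. For (ii), the second one cannot be closed in general. If $m^{\neq 2}=o(m)$, it suffices in that regime to call only red edges short. But if $m^{\neq 2}=\Theta(m)$, the statement is false. For example, take $k$ vertices of degree $k$ plus $(\frac{1}{\delta}-1)\frac{k^2}{2}$ vertices of degree $2$, so that $m^{\neq 2}=\frac{k^2}{2}=\delta m$. A switching shows that whp there are more than $\frac{\delta}{8}m^{\neq 2}$ red edges, all spanned by only $k=\sqrt{2m^{\neq 2}}$ vertices. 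So with $A=16/\delta$, some set of more than $m^{\neq 2}/A$ edges contains far fewer than $\frac{m}{2A\log\log\log m}=\frac{m^{\neq 2}}{32\log\log\log m}$ vertices.
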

\begin{proof}
(ii) follows immediately from (i) so we need only prove the latter. 
 It is vacuous  unless  $m \ge m^{\neq 2} \log \log \log m$ 
 and hence $N=n_2-2g-y>n_2-5m^{\neq 2}>\frac{m}{3}$ so we assume this is the case. Applying 
 Proposition \ref{redandyellowpathsprop}, we have that whp, the
 number of red and yellow  edges is less than $\frac{m^{\neq 2}}{\sqrt{\log \log \log m}}$. It remains to bound the number of 
 short green edges.
 
 Using the balls and bins perspective described above, the probability a specific draw is a delimiter is $\frac{g-1}{N}  \le \frac{m^{\neq 2}}{m/3}$. So, the expected number of delimiters chosen in  the first $\frac{m}{m^{\neq 2} \log \log  \log m}$  draws  is less than  $\frac{3}{\log \log  \log m}$. Hence the probability that
 the first (and therefore any) green edge has  length less than $\frac{m}{m^{\neq 2} \log \log  \log m}$ is less than  $(\frac{3}{\log \log  \log m})$ . So, the expected  number of green edges of length less than $\frac{m}{m^{\neq 2} \log \log  \log m}$  is less than  $\frac{3m^{\neq 2}}{\log \log  \log m}$ and applying Markov's inequality whp there 
 are at most $\frac{m^{\neq 2}}{\sqrt{\log \log \log m}}$ such short green edges. 
\end{proof}

\section{Degree Sequences without a Center}\label{sec:nocenter}

In this section, we prove Theorem \ref{thm:mainnew5}. 
Thus, throughout the section, we continue to work in the setting of a sequence of degree sequences such that for some $\rho<\rho_0$ and 
$\mu>0$ sufficiently small in terms of $\rho$, for all large enough $\ell$, $\cD_{\ell}$ 
does not have a $\mu$-center and $R_{\cD_{\ell}} \ge \rho m^{\ne 2}=\omega(1)$. 
We stress that all the results we prove hold for all $\mu$ 
sufficiently small which will allow us to choose  $\mu$
to be smaller than another constant near the end of the section.
(Recall that we say a degree sequence $\cD=(d_1,\ldots,d_n)$ does not have a $\mu$-center if either $\sum_{n-d_n+1}^n d_i < \mu^2 m^{\neq 2}$ or $m^{\neq 2} < \mu^3 m$. We exclusively consider such degree sequences without explicitly restating this throughout the section.)

We note that Proposition \ref{fewcycles} implies that whp every cycle component has $O(\frac{m \log m^{\neq 2}}{m^{\neq 2}})$ vertices and hence 
its  diameter is  $O(\frac{m \log m^{\neq 2}}{m^{\neq 2}})$ and its mixing
time is $O\left(\left(\frac{m \log m^{\neq 2}}{m^{\neq 2}}\right)^2\right)$. So we need only consider the non-cycle  components.

As discussed previously, our  approach is to bound the conductance of sets of various orders. We bound  the conductance of  connected sets of vertices of  $G(\cD)$   using a number of different approaches depending on  their intersection with $J$. 
In order to do so, we need to extend the definition of conductance to subgraphs. 
The {\em conductance of a subgraph} $F$ of a graph is 
$\frac{d_{G}(V(F))-2|E(F)|}{d_{G}(V(F))}$. So the conductance of a set $S$ is simply the conductance of the subgraph it induces. 

Letting $L_{\max}$ be the maximum of 2 and the length of the longest green path,
if $S$ is  completely contained in a  green, yellow, or red path then its conductance is at least $\frac{1}{L_{\max}}$. Otherwise, we bound the conductance of $S$ by bounding in the coloured reduction the conductance of the connected subgraph $F_S$ of $J$ whose vertices are those vertices of $J$ contained in $S$ and whose (coloured) edges are those for which the entirety of the corresponding path is contained in $S$.
In this case, we note that there are at least $d_G(V(F_S))-2|E(F_S)|$ paths containing an edge between $S$ and $V-S$, 
 so the conductance of $S$ is at least
 $\frac{d_G(V(F_S))}{d(S)}\cond(F_S)>\frac{\cond(F_S)}{2L_{\max}}$. 
Furthermore, we can improve this bound by obtaining 
 bounds on the number of vertices on specified sets of $r$ edges of $J$.

We first bound the number and conductance of connected subgraphs $F$ of $J$  for various choices  of $d_G(F)$ 
and $|E(F)|$ and then use this to bound the conductance of connected subsets in the union of the giant components 
of $G$.

For subgraphs $F$ with $d(F)=O(\log m^{\neq 2})$, we will apply the following observation.

\begin{obs}
\label{easyobs}
 For any $F \subseteq J$ which is not a component of $J$,     $\cond(F)> \frac{1}{d(V(F))}$. 
\end{obs}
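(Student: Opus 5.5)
The plan is to unwind the definition of the conductance of a subgraph and count half-edges. By definition,
\[
\cond(F)=\frac{d(V(F))-2|E(F)|}{d(V(F))},
\]
so the observation is equivalent to the integer inequality $d(V(F))-2|E(F)|>1$, i.e.\ $d(V(F))-2|E(F)|\ge 2$. The numerator $d(V(F))-2|E(F)|$ is exactly the number of half-edges incident to vertices of $V(F)$ (degrees taken in $J$, which agree with degrees in $G$ for vertices of $J$) that are not used by edges of $F$. So the task is to find at least two such half-edges.

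\emph{Step 1 (at least one spare half-edge).} Since $F$ is not a component of $J$, either some edge of $J$ joins $V(F)$ to $V(J)\setminus V(F)$, or some edge of $J$ with both ends in $V(F)$ is missing from $F$. (Here I read ``not a component'' as meaning that $F$ is not all of the component of $J$ containing it.) In the first case that edge supplies one spare half-edge at $V(F)$. In the second case it supplies two, one at each end, and we are already done.

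\emph{Step 2 (upgrade to two).} It remains to rule out the case in which exactly one spare half-edge exists, namely a single edge $e$ leaving $V(F)$ with every other edge at $V(F)$ lying in $F$. The parity of $d(V(F))-2|E(F)|$ equals the parity of $d(V(F))$, so parity alone does not exclude this case. I would therefore look at the other side of $e$. Its endpoint $w\notin V(F)$ together with the rest of its side forms a piece of the component attached to $F$ only through $e$. I would try to use the structure of $J$ (no degree-$2$ vertices, since it is the homeomorphic reduction) together with the way this observation is applied: $F=F_S$ arises from a connected set $S$ of $G$ whose boundary is counted through paths of $J$. The aim is to show that such an $F$ either has a second boundary half-edge or can be taken to include the pendant side.

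\emph{Main obstacle.} The hard part is Step 2. In full generality the single-pendant-edge situation genuinely occurs, for example when $F$ is a single degree-$1$ vertex of $J$ and $e$ is its only edge. In that case $\cond(F)=1=1/d(V(F))$, so only the non-strict inequality holds. My first attempt would be to check whether in the intended applications (subgraphs $F$ with $d(V(F))=O(\log m^{\neq 2})$, used inside the sum in Proposition~\ref{conductanceboundnew}) this boundary case is excluded by the surrounding hypotheses, or whether the strict form can be obtained by a convention for $d(V(F))$. If neither works, the argument only establishes $\cond(F)\ge 1/d(V(F))$, which is weaker than the observation as stated. That weaker bound is what the later applications need, since it is used only up to constant factors.
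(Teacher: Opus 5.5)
Your analysis is correct, and your Step~1 is the entire content of this observation; the paper states it without proof. As written, with a strict inequality, the observation is false, and your counterexample is valid. In general the numerator $d(V(F))-2|E(F)|$ counts the unused half-edges at $V(F)$. So for a nonempty connected $F\subseteq J$ that is not all of a component it is at least $1$, which gives $\cond(F)\ge 1/d(V(F))$. It equals $1$ exactly when $F$ contains every edge of $J$ inside $V(F)$ and exactly one edge of $J$ leaves $V(F)$. In line with your parity remark, the strict form does hold whenever $d(V(F))$ is even. Besides your single degree-$1$ vertex, any set cut off from the rest of its component by one edge gives equality, for example a pendant tree or decoration.

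You should therefore drop Step~2 rather than look for hypotheses in the applications that exclude this case. None do: a leaf of $G$ in the giant component has $\cond=1=1/d$ exactly, and such leaves occur, e.g.\ for the sequence $(3,\dots,3,1,\dots,1)$ from the introduction. This is harmless, because the observation only feeds into Proposition~\ref{conductanceboundnew} and Theorem~\ref{conductancebound}, where the non-strict bound suffices up to constants.

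One point to make explicit: your reading of ``not a component'' presupposes that $F$ is connected. Without connectivity, $F$ could be the union of two whole components of $J$. Then the numerator is $0$ and even the weak bound fails. The correct hypothesis is that $F$ is not a union of whole components, and connectivity together with not being a component suffices. The paper only applies the observation to connected sets, so stating this hypothesis completes your argument.
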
 

 For a constant $B=B(\rho)$ specified below, we bound the conductance of $F$ with $d(V(F))$ between $B \log m^{\neq 2}$ and $\frac{m^{\neq 2}}{B}$   using coloured switchings in  $J$ 
 as discussed above. We note that we will be able to choose $B$ independently of $\mu$ provided we take
 $\mu <  \frac{\rho }{4}$. 

  Recall that the {\em core} of $G$ is the maximal subgraph of $G$ of minimum degree 2. The graph $G$ is obtained from its core by adding {\it decorations}, which are trees that are disjoint from each other and the core, each of which is either a component of $G$ or is attached to  a single vertex of the core by a single edge. If the latter, call the unique vertex of the decoration which is adjacent to the core its {\em root}.

 Recall also that the \emph{kernel} $K=K(G)$ of $G$ is the homeomorphic reduction of its core. In 
 We shall see that whp, the kernel is connected (see also \cite[Theorem 1.3]{crudele}) and hence the giant component is obtained from the kernel  by replacing its edges by paths all of whose internal vertices have degree two, with the same endpoints as the edge; and then adding   decorations hanging off the 
vertices of these paths (including their endpoints in the kernel).  We call the union of such a path and the decorations hanging off its interior 
vertices a {\it decorated path}. We will use the techniques that we applied to smaller sets to obtain bounds on the probability that a decoration hanging off a vertex of the kernel or a decorated path has a large  number of edges. This allows us to translate lower bounds on the number of edges of the kernel between two large subsets of its vertices into bounds on the conductance of large connected sets in the giant component of  $G$.

The restriction that either $\sum_{n-d_n+1}^n d_i \leq \mu^2m^{\neq 2} < \frac{\rho m^{\neq 2}}{16}$ or $m^{\neq 2}< \mu^3 m < \frac{\rho^3}{64}m$ is relevant in handling both large and small sets as it allows us to bound the difference between probabilities in the unconditioned configuration model and in our model. 

For the next lemma, 
we recall that $G$ is generated via a uniformly random matching $M\in_u\cM(\cD)$, and that we write $M^*$ for the corresponding matching of the halfedges incident to vertices of degree not equal to two.  
Let $\cH = \bigcup_{i \in [n]:d_i \ne 2} \{i1,\ldots,id_i\}$ be the set of all halfedges in $M^*$, and for $h \in \cH$, let $v(h)$ denote the vertex that $h$ is incident to.
 
\begin{lem}
\label{nocenterhelpslem}
Let $F^-$ be a coloured graph 
such that $V(F^-) \subseteq \{1, \dots, n\}$ and $\sum_{i \in F^-} d_i \leq \frac{\rho m^{\neq 2}}{4}$. Conditioning on $F^-$ being a subgraph of $J$ and on the subset $M_{F^-}$ of $M^*$ 
corresponding  to its edges, the following holds with high probability over $J$. 
  Fix $v \in V(F^-)$ and a halfedge $h$ incident to $v$ which is not in $M_{F^-}$, and let $W$ be the (random) vertex incident to the halfedge $h$ is matched to in $M$. Then 
\[
\E{\left.\min\Big(d(W)-2, \frac{4}{\rho^2}\Big)\right| J, W \in V(J) \setminus V(F^-)} \geq \frac{\rho}{4}\ .
\]
\end{lem}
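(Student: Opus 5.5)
Since $W \notin V(F^-)$, the degree $d(W)$ here is the degree in $G$, which is not $2$. So $d(W)-2\ge -1$, and it equals $-1$ exactly when $W$ is a leaf. My plan is to compare the law of $W$ with the configuration-model law, in which the partner halfedge is uniform over unmatched halfedges. I will then lower bound the truncated mean of $d-2$ under a size-biased choice of vertex, using the hypothesis $R_{\cD}\ge\rho m^{\neq 2}$.

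\textbf{Step 1 (deterministic bound under the unconditioned law).} Let $U$ be the set of vertices of degree $\neq 2$ outside $V(F^-)$. Pick a halfedge uniformly from those incident to $U$; its vertex $W'$ has $\p{W'=u}\propto d_u$. The claim is $\sum_{u\in U} d_u\min(d_u-2,4/\rho^2)\ge \frac{\rho}{2}\sum_{u\in U}d_u$. Order the degrees as in the definition of $j_\cD$. The prefix $i<j_\cD$ has $\sum d_i(d_i-2)\le 0$, which should imply $\sum_{i<j_\cD,\, d_i\neq 2} d_i(d_i-2)\ge -\max d\cdot(\ldots)$. More usefully, vertices $i\ge j_\cD$ have degree at least $3$ (except possibly the boundary term), and each contributes $d_i\min(d_i-2,4/\rho^2)\ge d_i$. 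The negative contribution comes only from leaves, at $-1$ each. Write $n_1\le \sum_{i<j_\cD, d_i\ge3} d_i(d_i-2)$ up to $O(\max d)$, and truncate. The truncation only hurts vertices of degree above $4/\rho^2+2$. For those, $d(d-2)$ is replaced by $d\cdot 4/\rho^2$, but such vertices lie beyond $j_\cD$ whenever $j_\cD$'s prefix has smaller degrees, since the prefix degrees are at most $d_{j_\cD}$.

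This needs care. The cleaner route is to lower bound by $\sum_{i\ge j_\cD} d_i\cdot 1 - n_1 + \sum_{i<j_\cD,d_i\ge 3}d_i$. Balancing via the prefix inequality $n_1\le \sum_{i<j_\cD,d_i\ge3}d_i(d_i-2)$, with small degrees in the prefix controlled by $4/\rho^2$, handles the prefix vertices of degree at most $4/\rho^2+2$. If the prefix contains larger degrees, then every $i\ge j_\cD$ has degree at least that large, and the total mass $R_\cD$ alone dominates. With $R_\cD\ge\rho m^{\neq 2}$, the total is at least about $\rho m^{\neq2}$ out of a total degree of $2m^{\neq 2}$, which gives ratio $\rho/2$. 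Removing $F^-$ and the already-matched halfedges deletes mass at most $\rho m^{\neq2}/4$ (this is exactly the hypothesis on $F^-$). Each deleted unit of degree changes the sum by at most $4/\rho^2$ per halfedge weight. The truncation is chosen precisely so that this loss is of order $\rho m^{\neq 2}/4$, leaving $\ge \rho/4$. I expect this accounting, and in particular choosing the constant $4/\rho^2$ so the loss is affordable, to be fiddly but elementary.

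\textbf{Step 2 (from the configuration model to the uniform simple graph).} Here I use the no-$\mu$-center hypothesis. Conditioned on $J$ (equivalently on $M^*$ outside $h$'s pairing), I compare the probability that $h$ is matched to a halfedge at $u$ with the uniform rate using switchings. Take a graph where $h$ pairs with $u$'s halfedge, and switch that pair with a generic other pair $xy$. This is forbidden only if it creates a loop or a multiple edge, that is, only if $x$ or $y$ is adjacent to $v$ or to $u$. The number of bad choices is at most about $d_u d_{\max}+d_v d_{\max}$. By \eqref{switchingineq}, $\p{W=u}$ is proportional to $d_u(1+O(\epsilon_u))$, with $\epsilon_u$ controlled by $\sum_{\text{nbr}} d/m^{\neq 2}$. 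Without a center, either $\sum_{i>n-d_n}d_i<\mu^2 m^{\neq 2}$ (the total degree of the top $\Delta$ vertices is small), or $m^{\neq2}<\mu^3 m$. In the second case the relevant pairs are mostly green paths, so colored switchings are free of simplicity constraints, and the distortion is small by Proposition~\ref{specificedgeredoryellowprop}. In either case, the total weight of vertices whose probability is distorted by a non-negligible factor has degree sum at most about $\mu^2 m^{\neq 2}\ll \rho m^{\neq 2}$. Since the integrand is bounded between $-1$ and $4/\rho^2$, this perturbs the expectation by $O(\mu^2/\rho^2)$, which is less than $\rho/4$ slack for $\mu$ small in terms of $\rho$.

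\textbf{Main obstacle.} I expect Step 2 to be the hardest part: making the switching comparison hold ``whp over $J$'' uniformly in $v$ and $h$, and handling the high-degree neighbourhoods where simplicity constraints bias $W$ away from vertices already adjacent to $v$. My first attempt would bound these neighbourhoods by $d_v\cdot\Delta\le\sum_{i>n-d_n}d_i<\mu^2m^{\neq2}$ in the first case, and use the green-path freedom in the second.
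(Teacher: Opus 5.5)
Your architecture matches the paper's. You compare the partner of $h$ with the uniform-halfedge law by switchings: coloured switchings plus Proposition~\ref{specificedgeredoryellowprop} when $m^{\neq 2}<\mu^3 m$, and ordinary switchings controlled by the no-center bound otherwise. You then do a deterministic computation with $j_\cD$ and $R_\cD$.

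\textbf{The gap is in Step~1, in how you absorb the deletion of $F^-$.} Charging each deleted halfedge at most $4/\rho^2$ gives a loss of up to $\frac{4}{\rho^2}\cdot\frac{\rho m^{\neq 2}}{4}=\frac{m^{\neq 2}}{\rho}$, not order $\rho m^{\neq 2}$. In the regime $d_{j_\cD}\le 4/\rho^2+2$, your lower bound on the sum is only about $R_\cD\approx\rho m^{\neq 2}$, because each tail halfedge is credited weight $1$. So the loss swamps the gain.

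The bookkeeping is inconsistent: gains are credited at weight $1$, losses charged at weight $4/\rho^2$. This is not merely a matter of constants. If $F^-$ consists of prefix vertices of degree near $d_{j_\cD}$, deleting them lowers the prefix sum by up to $d_{j_\cD}-2$ per halfedge. A tail credited at weight $1$ cannot pay for that.

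The missing idea is the exchange argument encoded in the paper's minimisation over $f^*$. There $f^*$ takes values in $[0,1]$, has total at most $\rho m^{\neq 2}/2$, and the minimiser is supported on the highest-degree halfedges. Concretely, set $c_*:=d_{j_\cD}-2\ge 1$. In this regime:
\begin{itemize}
\item every halfedge at a vertex $i>j_\cD$ has weight $\min(d_i-2,4/\rho^2)\ge c_*$;
\item every halfedge at a vertex $i\le j_\cD$ has weight at most $c_*$ and is untruncated;
\item $\sum_{i\le j_\cD}d_i(d_i-2)>0$.
\end{itemize}
Hence, for any deleted set $X$ of halfedges, the remaining sum exceeds $c_*(R_\cD-d_{j_\cD}-|X|)$. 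You must count surviving tail halfedges, not subtract a per-halfedge loss.

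Relatedly, in this lemma the truncation level is constrained from below, not above. It must exceed roughly $4/\rho$, so that when $d_{j_\cD}>4/\rho^2+2$ the at least $\rho m^{\neq 2}/2$ surviving tail halfedges outweigh the at most $2m^{\neq 2}$ leaves. An upper limit on it matters only for the bounded increments in Lemma~\ref{countsmallexcess}.

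\textbf{Step~2 also needs repair.} The lower bound on $\p{W=u}$ cannot be pointwise. The no$\to$yes switching of $hb,h'b'$ into $hh',bb'$ is blocked when $u\in N[v]$ or $v(b')\in N[v(b)]$. The second condition depends on the random partner $b$ of $h$, so the exceptional set is random, not a fixed set of vertices.

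The paper instead controls the averaged blocking probability $q(h')$. Each matching lies in few of the sets $Q^{h'}$, so $\sum_{h'}q(h')=O(\sum_{i>n-d_n}d_i)$. Then $q$ enters the same $f^*$ as $\mathbf 1_{V(F^-)}$.

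Your mass-times-maximum-weight estimate $O(\mu^2/\rho^2)$ is acceptable for this simplicity-induced distortion only because its mass is $O(\mu^2)$ and $\mu$ is yours to shrink (you need $\mu\ll\rho^{3/2}$). That is exactly the freedom you lack for $F^-$, whose mass is fixed at order $\rho$.

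Finally, $d_v\Delta\le\sum_{i>n-d_n}d_i$ is false: a single vertex of degree $\sqrt m$ among low-degree vertices breaks it. The same problem affects your bound $d_u d_{\max}$. The correct bound is $d(N(v))\le\sum_{i>n-d_v}d_i\le\sum_{i>n-d_n}d_i$.
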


\begin{proof}
Throughout the proof, we condition on $J$ and on the event that $F^-$ is a subgraph of $J$ and that the corresponding halfedge matching $M_{F^-}$ is a submatching of $M^*$. For ease of reading, we omit writing this conditioning from the expected values that we bound below. 
We choose $\rho_0$ sufficiently small  that 
$(1-\frac{\rho}{4})^{-2} < 1 + \frac{2\rho}{3}$ for $\rho \in (0,\rho_0)$.
For each halfedge $h'$ which is  incident to a vertex  $u \in V(J)\setminus V(F^{-})$, 
we consider switchings between the families ${\mathcal F}_{yes}$ and $\cF_{no}$, where $\cF_{yes}$ (respectively, $\cF_{no}$) is the collection of matchings $M \in \cM$ that correspond to graphs $G$ such that $J(G)$ contains $F^-$ as a subgraph and such that $hh' \in M^*$ (respectively, $hh' \notin M^*$).

{\bf Case 1: $m^{\neq 2} \le \mu^3 m$.} 

To obtain an element of ${\mathcal F}_{no}$  by a coloured switching from an element $M$ of ${\mathcal F}_{yes}$, 
using one of the two orderings of the edge $hh'$ we must switch  with some ordered matched pair of halfedges $bb' \in M^*$.
Furthermore, if $hh'$ is green, we can switch with any $bb'$ which is also green and such that
$v(b),v(b') \not \in V(F^-)$. 
By Proposition~\ref{specificedgeredoryellowprop}, conditioned on $F^-$ the probability that both $hh'$ and $bb'$ are green is at least $1-\mu^2$. Thus, the number of edges out of elements of $\cF_{yes}$ in our auxiliary bipartite graph is at least $(4-4\mu^2)(1-\frac{\rho}{4})m^{\neq 2}|{\mathcal F}_{yes}|$ and at most $4m^{\neq 2}|{\mathcal F}_{yes}|$.

To obtain an element of ${\mathcal F}_{yes}$ by switching from one in ${\mathcal F}_{no}$,
we must switch one of the two orderings of the edge containing $h'$ and one of the two orderings of the 
edge containing $h$. Furthermore, at most two of these pairs of orderings will work, and exactly two unless one of the edges we are switching is yellow or red. Again by Proposition~\ref{specificedgeredoryellowprop} and symmetry, 
we see that  for any $h'$, the number of switchings from 
${\mathcal F}_{no}$ is between  $(2-2\mu^2)|{\mathcal F}_{no}|$  and  $2|{\mathcal F}_{no}|$.

For every $h$ and $h'$, the conditional probability that $h$ is joined to $h'$ is equal to $\frac{|\cF_{yes}|}{|\cF_{yes}| + |\cF_{no}|}$. Applying the switching inequality \eqref{switchingineq}, we see that $$|\cF_{no}| \leq \frac{4m^{\neq 2}}{2-2\mu^2}|\cF_{yes}| = \frac{m^{\neq 2}}{1-\mu^2}|\cF_{yes}|$$ 
and 
$$|\cF_{no}| \geq \frac{(4-4\mu^2)(1-\frac{\rho}{4})m^{\neq 2}}{2}|\cF_{yes}| = (2-2\mu^2)(1- \frac{\rho}{4})m^{\neq 2}|\cF_{yes}|\ .$$
The above conditional probability is thus at least $\frac{1-\mu^2}{2m^{\neq 2} + 1-\mu^2} = \frac{1-\mu^2}{(1+o(1))2m^{\neq 2}}$ and at most
$\frac{(1-\mu^2)^{-1}(1-\frac{\rho}{4})^{-1}}{2m^{\neq 2}} \leq \frac{(1 - \frac{\rho}{4})^{-2}}{2m^{\neq 2}} \leq 
\frac{1+\frac{2\rho}{3}}{2m^{\neq 2}}$. 

 To compute the expression in the claim, recall we condition on $h$ being matched to a halfedge at vertex $W$ outside of $F^-$,  so we define a function $f$ on $\cH$ where
$$f(h') = \begin{cases} 1\ \ &\text{ if }v(h') \in V(F^-)\\
0\ \ &\text{ else.}
\end{cases}$$

Recalling that $d(v) > 0$ for all $v \in V$, we partition $\cH$ into $\cH_1 := \{h' \in \cH : d(v(h')) =1\}$ and let $\cH_2 = \{h' \in \cH : d(v(h')) \geq 3\}$. For $h' \in \cH_1$, we have $\min\{d(v(h')) - 2, \frac{4}{\rho^2}\} = -1$. 
For $h' \in \cH_1$ we use the upper bound on the probability above, and for $h' \in \cH_2$ we use the lower bound, to obtain
\begin{align*}&\E{\min\left\{2m^{\neq 2}(d(W)-2), 2m^{\neq 2}\frac{4}{\rho^2}\right\}} \geq \\
& \ \ \geq \sum_{h' \in \cH_2} (1-f(h'))(1-o(1))(1-\mu^2)\min\left\{d(v(h'))-2, \frac{4}{\rho^2}\right\} + \sum_{h' \in \cH_1}\left(1+\frac{2\rho}{3}\right)(-1)\\
& \ \ \geq (1-o(1))\sum_{h' \in \cH}\left((1-f(h'))(1-\mu^2)\min\left\{d(v(h'))-2, \frac{4}{\rho^2}\right\} - \left(\frac{2\rho}{3} + \mu^2\right)\right)\\
& \ \ \geq (1-o(1))\sum_{h' \in \cH}\left((1 - f(h'))\left(1-\frac{\rho^2}{16}\right)\min\left\{d(v(h')) - 2, \frac{4}{\rho^2}\right\} - \left(\frac{2\rho}{3} + \frac{\rho^2}{16}\right)\right), 
\end{align*}
where in the last line we use the assumption $\mu \leq \frac{\rho}{4}$.

We may bound the final expression from below by
\[
\min_{f^*} \left[(1-o(1))\sum_{h' \in \cH}\left((1 - f^*(h'))\left(1-\frac{\rho^2}{16}\right)\min\left\{d(v(h')) - 2, \frac{4}{\rho^2}\right\} - \left(\frac{2\rho}{3} + \frac{\rho^2}{16}\right)\right)\right], 
\]
where the minimum is taken over all non-negative 
functions $f^*$ on $\cH$ which have maximum 1 and sum to at most $\frac{\rho m^{\neq 2}}{2}$.
Recalling that $j_{\D}$ is defined as the minimum $j \in [n]$ such that $\sum_{i=1}^j d_i(d_i - 2) > 0$, the above sum is minimized when 
$f^*(h') \in \{0,1\}$ for all $h' \in \cH$, and every $h'$ for which $f^*(h') = 1$ satisfies $d(v(h')) \geq d(v_{j_\D})$.
Now, by Theorem~\ref{whenhasgiantthm}, provided that $n$ is sufficiently large there are at least  $\frac{ \rho m^{\neq 2}}{2}$ halfedges incident to vertices $v(h')$ for which $f^*(h')=0$ and $d(v(h'))\ge d_{j_{\mathcal D}}$. 

If $d_{j_{\D}}>\frac{4}{\rho^2}+2$ then we are done.
Indeed, the summation over the halfedges for which $d(v(h')) \geq d_{j_\D}$ contributes at least 
$$\frac{\rho m^{\neq 2}}{2} \left(\left(1 - \frac{\rho^2}{16}\right)\left(\frac{4}{\rho^2}\right)\right) - m^{\neq 2}\left(\frac{2\rho}{3} + \frac{\rho^2}{16}\right) > \frac{2m^{\neq 2}}{\rho}\left(1-\frac{\rho^2}{16}\right) - \frac{3\rho m^{\neq 2}}{4}$$
which is at least $\frac{\rho m^{\neq 2}}{4}$ since $\rho < 1$.

Otherwise, we split our sum into two parts, the first over $h'$ such that $d(v(h')) \leq d_{j_\D}$ and the second over the remaining $h'$. For the first part of the sum, we count $d(v(h'))(d(v(h'))-2)$ for all vertices and by definition of $j_\D$, this sum is positive. For the second part of the sum, we have a lower bound of at least $\frac{\rho m^{\neq 2}}{2}$, so again we are done.

{\bf Case 2: } $m^{\neq 2} > \mu^3 m$, which implies $\sum_{i=n-d_n+1}^{n} d_i \le \frac{\rho m^{\neq 2}}{4}$ 

As in Case 1, consider how we obtain an element of ${\mathcal F}_{no}$ from an element of ${\mathcal F}_{yes}$ 
by switching an ordering of $hh'$ with some ordering of an edge $bb'$. 
If $d(v(h')) = 1$, we can switch $hh'$ with any $bb'$ for which $v(b), v(b') \not \in N(v) \cup V(F^-)$. By symmetry, we may do the same with $h'h$ and $b'b$. 
By our assumptions that $m^{\neq 2} > \mu^3 m$ and $\sum_{i \in F^-} d_i \leq \frac{\rho m^{\neq 2}}{4}$, the number of such switchings is at least $4(1-\frac{\rho}{4})m^{\neq 2}$ if $h'$ is incident to a vertex of degree 1, and is at most $4m^{\neq 2}$.

Given an element of $\cF_{no}$, suppose $h$ is matched with a halfedge $b \neq h'$ and $h'$ is matched with a halfedge $b'$. To obtain an element of $\cF_{yes}$, we must swap an ordering of $bh$ with an ordering of $b'h'$. Furthermore, at most two of these possibilities work and exactly two if $b', h'$ are not incident to $v(b)$ or $v$ or a neighbor of $v(b)$ or $v$.
Let $q(h')$ be the probability that at least one of $h', b'$ is incident to a vertex in $\{v, v(b)\} \cup N(v) \cup N(v(b))$. In other words, if we let $Q^{h'}$ be the set of all matchings in $\cF_{no}$ for which the above holds, then $q(h') = \frac{|Q^{h'}|}{|\cF_{yes}| + |\cF_{no}|}$.
We then have
$$|\cF_{no}| \leq \frac{4m^{\neq 2}}{2(1-q(h'))}|\cF_{yes}| = \frac{2m^{\neq 2}}{1-q(h')}|\cF_{yes}|\ , $$
$$|\cF_{no}| \geq 2(1-\frac{\rho}{4})m^{\neq 2}|\cF_{yes}|, $$
the second bound holding when $d(v(h')) = 1$.
We obtain that $\frac{|\cF_{yes}|}{|\cF_{yes}| + |\cF_{no}|}$ is at least 
$\frac{1-q(h')}{1-q(h') + 2m^{\neq 2}} \geq \frac{1-q(h')}{(1+o(1))2m^{\neq 2}}$, and is at most $\frac{1+\frac{\rho}{3}}{2m^{\neq 2}}$ if $d(v(h')) = 1$.

By the assumptions of Case 2, the size of the second neighborhood of any vertex is at most $\frac{\rho m^{\neq 2}}{4}$. This means that every matching in $\cM(\cD)$ is contained in at most $\frac{\rho m^{\neq 2}}{4}$ of the sets $Q^{h'}$.
Thus, 
the sum of $q(h')$ over all $h'$ incident to vertices of $V(J)\setminus V(F^-)$ is also at most $\frac{\rho m^{\neq 2}}{4}$. 

Now, define $f$ on $\cH$ by 
$$f(h') = \begin{cases} 
1&\ \ \text{if } v(h') \in V(F^-) \\
q(h')&\ \ \text{else. }
\end{cases}$$
The above remarks imply that $\sum_{h' \in \cH} f(h') \le \frac{\rho m^{\ne 2}}{2}$.
Conditioned on the vertex $W$ 
matched to $h$ being outside $F^-$, we then have that
\begin{align*} &\E{\min\left\{2m^{\neq 2}(d(W)-2), 2m^{\neq 2}\frac{4}{\rho^2}\right\}} \geq\\
&\ \ \ \geq (1-o(1))\sum_{h' \in \cH} \left((1-f(h'))\min\left\{d(v(h')) - 2, \frac{4}{\rho^2}\right\} - \frac{\rho}{3} - 1\right)
\end{align*}
The rest of the argument proceeds just as above, by bounding the right-hand side from below by minimizing over functions $f^*$ on $\cH$ which have maximum one and sum to at most $\frac{\rho m^{\neq 2}}{2}$.
\end{proof}

\subsection{Bounding the conductance of small subgraphs of the coloured reduction}
In this subsection, we use Lemma~\ref{nocenterhelpslem} to bound the conductance of connected subgraphs $F$ of the coloured reduction for which $C\log m^{\neq 2} \le d(V(F)) \le \frac{m^{\neq 2}}{C}$ where $C$ is a constant depending only on $\rho$. We start by bounding the {\em excess} of such subgraphs $F$,  which we define as $\ex(F) = d(V(F)) - 2|V(F)| + 2$. 
To do this, we count the expected number of submatchings of $M^*$ corresponding to rooted spanning subtrees  of $F$ using an exploration process which we now describe.

For positive integers $d,\ones$, and  a submatching of $M^*$ 
corresponding to a  spanning tree $T$ of $J$ rooted at $v$ with $\ones$ edges (and so $\ones+1$ vertices) and such that $d_J(T) :=\sum_{w \in V(T)} \deg_J(w)=d$ we may associate a binary string of length $d-\ones$ with exactly $\ones$ ones. 
We expose the vertices of $T$ starting from $v_1=v$ in a breadth-first manner, exposing  the children of $v_i$  (and hence their degree) and the paired halfedges of $M$ corresponding to the edges to these children when we  explore
from $v_i$.  We expose the  children of each vertex on a level  in increasing order of label. For each vertex $w$ of the tree, we define  $d^{(w)}$ to be the degree of $w$ in $J$ outside of the subtree of $T$ we have already exposed  when exploring from $w$ (which is $\deg_J(w)$ if $w=v$ and is $\deg_J(w)-1$ otherwise). We consider the  $ d^{(w)}$ unused halfedges from $v_i$ in order and use the next $d^{(w)}$ bits of the string to record which of these  halfedges  are in edges to new vertices of $T$. Ones represent edges to new vertices of $T$, and we add the corresponding edges to $T$ and corresponding edges to our submatching 
of $M^*$. Zeros represent either edges to vertices already added to $T$ or edges to vertices not in $T$. The tree $T$ and corresponding 
submatching may be recovered from $M^*$ and the resulting binary string since the breadth-first exploration order is deterministic.

Writing $\cB$ for the set of a binary strings of length $d-\ones$ with exactly $\ones$ ones, the preceding paragraph describes how every submatching of $M^*$ corresponding to a  tree $T$ containing $v$ with $\ones$ edges and $d(V(T))=d$ uniquely corresponds to a string in $\cB$. However, not every binary string in $\cB$  is associated to such a matching.
In particular, for a string in $\cB$ to be associated to such a matching, all its ``one'' entries must correspond to edges to vertices of $J$ that have not already been added to $T$, and the degrees in $J$ of all the vertices that are added to the tree must sum to exactly $d$. Say that a bit string $b$ is {\em valid} for $J$ and $v$ if both of these properties hold. Then it follows that the number of such submatchings of $M^*$ is 
\begin{equation}\label{treenumber}
\sum_{b \in \cB} \I{b\text{ is valid}} \le 
{d -\ones \choose d-2\ones} \le \left(\frac{e(d-\ones)}{d-2\ones}\right)^{d-2\ones}.
\end{equation}

We will apply this idea using Lemma~\ref{nocenterhelpslem} to help bound  probabilities, and obtain the following.

\begin{lem} 
\label{countsmallexcess}
There exist $\epsilon,\gamma,B>0$, and $a_0$    such that for any $a$ with $a_0 \le a \le \frac{m}{B}$,
the expected number of submatchings of $M$  
corresponding to trees with  $a$ edges   and $d_J(T) \leq (2+\epsilon)a$ is at most $\frac{m^{\neq 2}2^{-\gamma a}}{a}$ and hence the expected number of vertices in such trees is at most $m^{\neq 2}2^{-\gamma a}$. Furthermore, for $a \le \log \log m$,  the expected number of ordered sets of $\lceil \log m \rceil$  vertices in disjoint such trees is at most  $(m^{\neq 2} 2^{-\gamma a})^{\lceil \log m \rceil}$.
\end{lem}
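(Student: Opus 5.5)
The approach is a first-moment count over rooted spanning trees, encoded by the bit strings of \eqref{treenumber}, with probabilities controlled by Lemma~\ref{nocenterhelpslem}. Fix a root $v$ of $J$ and a value $d \le (2+\epsilon)a$. The expected number of submatchings of $M^*$ corresponding to trees rooted at $v$ with $a$ edges and $d_J(T)=d$ is
\[
\sum_{b\in\cB}\p{b \text{ is valid}}.
\]
There are at most $\binom{d-a}{d-2a}\le (e(1+\epsilon)/\epsilon)^{\epsilon a}=2^{O(\epsilon\log(1/\epsilon))a}$ strings, because $d-2a\le \epsilon a$. The task is therefore to show that any single string is valid with probability at most $2^{-c a}$ for some $c=c(\rho)>0$ independent of $\epsilon$. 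Choosing $\epsilon$ small then gives a bound of $2^{-2\gamma a}$ per root and per value of $d$. Summing over the at most $(2+\epsilon)a$ values of $d$ and over roots, and dividing by the factor $a+1$ that arises because each tree with vertex set counted is rooted at each of its vertices, yields the stated bound $m^{\neq 2}2^{-\gamma a}/a$. (The sum over roots is weighted by degree: it is really a sum over halfedges, of which there are $2m^{\neq 2}$.) Multiplying by the $a+1$ vertices per tree gives the vertex bound.

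To bound the probability that a fixed string is valid, I would reveal the matching one ``one''-bit at a time along the breadth-first exploration. When a bit equal to one is processed, the corresponding halfedge $h$ at the current vertex must be matched to a halfedge of a vertex $W$ outside the current partial tree $F^-$. Conditioned on this, Lemma~\ref{nocenterhelpslem} applies, since $d(F^-)\le d\le (2+\epsilon)a\le \rho m^{\neq 2}/4$ once $B$ is large. It says that the increment $X_i=\min(d(W)-2,4/\rho^2)$ has conditional mean at least $\rho/4$, and it is bounded in $[-1,4/\rho^2]$.

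Validity forces $\sum_i (d(W_i)-2)=d-2a-(\deg(v)-2)\le \epsilon a+O(1)$. Since $d(W_i)-2\ge X_i$, it also forces $\sum_{i\le a} X_i\le \epsilon a+O(1)$. The $X_i$ form a sequence with bounded increments and conditional drift at least $\rho/4$. An Azuma/Freedman-type martingale bound then gives
\[
\p{\textstyle\sum_i X_i\le \epsilon a}\le \exp\bigl(-\Omega(\rho^6 a)\bigr)
\]
when $\epsilon<\rho/8$. Zero bits only reveal non-tree edges, so they do not hurt the conditioning. The fact that the lemma's conclusion holds only whp over $J$ must be absorbed, and I would handle this by working on that high-probability event; making this rigorous is the main obstacle. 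Specifically, one must check that the conditioning on $F^-$ together with the revealed submatching $M_{F^-}$ is exactly what Lemma~\ref{nocenterhelpslem} allows, and that the bad event has probability small enough to be summed against $2^{\Theta(\epsilon a)}$ strings. This is where the lower cutoff $a\ge a_0$ is needed.

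For the final claim with $a\le\log\log m$, I would count ordered $\lceil\log m\rceil$-tuples of vertices, each in its own tree, by exploring the trees one after another. The union of all explored trees has total degree at most $(2+\epsilon)a\lceil\log m\rceil=o(m^{\neq 2})$, so Lemma~\ref{nocenterhelpslem} still applies with $F^-$ equal to the union of everything explored so far. The per-tree bound $m^{\neq 2}2^{-\gamma a}$ then multiplies across the trees, giving $(m^{\neq 2}2^{-\gamma a})^{\lceil\log m\rceil}$.
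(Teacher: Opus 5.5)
Your proposal is correct and follows essentially the same route as the paper: a union bound over roots and the bit strings of \eqref{treenumber}, with Lemma~\ref{nocenterhelpslem} supplying a drift of $\rho/4$ for a bounded-increment martingale whose concentration beats the $2^{O(\epsilon\log(1/\epsilon))a}$ string count, then a sum over $d$, and for the last claim an exploration of the $\lceil\log m\rceil$ trees one after another. Two of your steps are slightly more careful than the paper's versions of the same steps: you choose $\epsilon$ after fixing the concentration exponent rather than taking the paper's fixed $\epsilon=\rho^3$, and you divide explicitly by $a+1$ for the rooting. The ``whp over $J$'' issue you flag is also passed over without comment in the paper's own proof.
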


\begin{proof}

We choose $\epsilon=\rho^3$ and  $B$  so that $(2+\epsilon)\frac{m}{B} \le \frac{\rho m^{\neq 2}}{4}$. 
For a given string $b \in \cB$ and a given vertex $v \in V(G)$, we condition on $v$ being a vertex in $J$, and try to explore a subtree $T$ of $J$ corresponding to $b$. By Lemma~\ref{nocenterhelpslem}, at each step that we explore a new vertex $W$ of $T$, conditional on the coloured graph $F^-$ corresponding to the subtree of $T$ that has already been explored, we have 
\[
\E{\left.\min\Big(d(W)-2, \frac{4}{\rho^2}\Big)\right|W \in V(J) \setminus V(F^-)} \geq \frac{\rho}{4}, 
\]
provided that $\sum_{i \in F^-} d_i \le \frac{\rho m^{\ne 2}}{4}$.
It follows that for $d \le (2+\epsilon )a \le  \frac{\rho m^{\ne 2}}{4}$ we can bound the total degree revealed during the exploration from below by 
\[
\min\left\{\frac{\rho m^{\ne 2}}{4},\ \sum_{i=1}^{\ones+1} D_i\right\},
\]
where $(D_i,i \ge 1)$ are random variables with the property that $\E{D_i|T_{i-1}} \ge 2+\frac{\rho}{4}$ and such that $0 \le D_i \le 2+\frac{4}{\rho^2}$. (Here $T_i$ represents the subtree of $T$ revealed by the exploration of the first $i$ vertices starting from $v$.)

   By a union bound over the starting vertex, for  any $d \le (2+\epsilon)a$, the expected number of subtrees in $J$ with $\ones$ edges and $d_J(T) = d$ is at most

\begin{align*}
& m^{\ne 2} \left(\frac{e(d-\ones)}{d-2\ones}\right)^{d-2\ones}\p{\sum_{i=1}^{\ones+1}D_i =d} \\
& \le m^{\ne 2} \left(\frac{ed}{d-2\ones}\right)^{d-2\ones}\p{\sum_{i=1}^{\ones+1}(D_i-\e D_i) \le d-\left(2+\frac{\rho}{4}\right)(\ones+1)}
\end{align*}
  
  Applying a martingale concentration bound  such as \cite[Theorem 3.7]{MR1678578}, provided that $\rho_0$ is sufficiently small, this 
  expectation is less than 
  \[
m^{\ne 2} e^{-\rho^2 \ones/64 }. 
\]
  Summing over $ d\le (2+\rho^3)a$ provides the first desired result. The second follows 
  by analyzing a choice of exploration for the $\lceil \log m \rceil$ trees in the same way.  

\end{proof}

\begin{cor} 
\label{nosmallexcess}
There exist $\epsilon = \epsilon(\rho) > 0$ and $C = C(\rho) > 0$ such that the probability $J(G)$ contains a tree $T$ with $C \log m^{\neq 2} \leq d_J(T) \leq \frac{m^{\neq 2}}{C}$ and $d_J(T) \leq (2+\epsilon)|V(T)|$ is $o(1)$.
\end{cor}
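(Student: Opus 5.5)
The corollary should follow from Lemma~\ref{countsmallexcess} by a first-moment bound summed over the possible tree sizes. I read the tree $T$ in the statement as a subtree of $J$ whose vertex set is the object of interest. Then $d_J(T)=\sum_{w\in V(T)}\deg_J(w)$, and $|V(T)|=a+1$, where $a$ is the number of edges of $T$.

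Take $\epsilon$ and $B$ from Lemma~\ref{countsmallexcess}, with $\gamma$ the accompanying constant. The condition $d_J(T)\le(2+\epsilon)|V(T)|=(2+\epsilon)(a+1)$ differs from the lemma's hypothesis $d_J(T)\le(2+\epsilon)a$ by an additive constant. To absorb it, I apply the lemma with $\epsilon$ replaced by $2\epsilon$ and then halve: for $a\ge a_0$ large we have $(2+\epsilon/2)(a+1)\le(2+\epsilon)a$. So the statement's $\epsilon$ will be half the lemma's.

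\textbf{Bounding $a$.} Since every vertex of $J$ has degree at least $1$, a tree with $d_J(T)\ge C\log m^{\neq 2}$ and $d_J(T)\le(2+\epsilon)(a+1)$ has $a\ge \frac{C\log m^{\neq 2}}{2+\epsilon}-1$. In the other direction, $a\le d_J(T)\le m^{\neq 2}/C\le m/B$ once $C\ge B$, so every relevant $a$ lies in the range where the lemma applies.

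\textbf{First moment and union bound.} Any such tree in $J$ yields at least one submatching of $M^*$ counted by the lemma: the edges of a subtree of $J$ are pairs of $M^*$, and for the root we fix, say, the smallest-labelled vertex. The probability that such a tree exists is therefore at most the expected number of these submatchings. Summing over admissible $a$ gives
\[
\sum_{a\ge \frac{C\log m^{\neq 2}}{2+\epsilon}-1}\frac{m^{\neq 2}2^{-\gamma a}}{a}\;\le\; O(1)\cdot m^{\neq 2}\,2^{-\gamma C\log m^{\neq 2}/3}.
\]
Choose $C$ large in terms of $\gamma$ and $\epsilon$ (hence in terms of $\rho$), so that $\gamma C/(3\ln 2)\ge 2$ if $\log$ is natural, or $\gamma C/3\ge 2$ if $\log$ is base 2. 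Then the bound is $O\big((m^{\neq 2})^{-1}\big)=o(1)$, because $m^{\neq 2}=\omega(1)$.

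\textbf{Main obstacle.} The only delicate point is matching the corollary's objects to those counted in Lemma~\ref{countsmallexcess}. The lemma counts submatchings of $M$, but its proof explores trees in $J$ through $M^*$, so the notation should be read as $M^*$. The lemma also roots its trees at a vertex, and this is harmless since the union bound over roots is already absorbed into the lemma's factor $m^{\neq 2}$. Everything else is bookkeeping with the constants $\epsilon$, $C$ and $a_0$; in particular, $C$ large ensures $a\ge a_0$ for large $m^{\neq 2}$.
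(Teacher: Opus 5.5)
Your proposal is correct and is essentially the paper's argument. The paper also sums the first-moment bound from Lemma~\ref{countsmallexcess} over the admissible range of $a$ (roughly $\frac{C}{6}\log m^{\neq 2}\le a\le \frac{m^{\neq 2}}{2C}$) and applies Markov's inequality. Your handling of $|V(T)|=a+1$ versus $a$, and of rooting the trees, is bookkeeping the paper leaves implicit.
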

\begin{proof}
We take  $C>0$ sufficiently large,  sum the expectation of the number of bad trees over  $\frac{C}{6} \log m^{\ne 2} \le a \le \frac{m^{\ne 2}}{2C}$, and apply Markov's  inequality, 
\end{proof}

The preceding corollary implies that if $C \log m^{\neq 2} \leq d_J(T) \leq \frac{m^{\neq 2}}{C}$, then whp $\ex(V(T)) > \frac{\epsilon}{2+\epsilon} d_J(T)$. 
We use this to prove the following. 

\begin{lem}\label{lem:sparse-conductance} 
    There exists $C = C(\rho) >0$ such that with high probability every subgraph $F$ of $J(G)$ with $C\log m^{\ne 2} \le d_J(F) \le m^{\ne 2}/C$ satisfies $\cond(F) \ge 1/C$.
\end{lem}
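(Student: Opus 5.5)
We reduce to connected $F$: the conductance of $F$ is $\frac{d(V(F))-2|E(F)|}{d(V(F))}$, and if $F$ has components $F_1,\dots,F_k$ this is a weighted average of the $\cond(F_i)$. A small component of $F$ could in principle have small conductance, but we only need the bound for connected $F$, as in the definition of $\cond(x)$. So fix a connected $F\subseteq J$ with $C\log m^{\neq 2}\le d_J(F)\le m^{\neq2}/C$, and write $s=|V(F)|$, $d=d_J(F)$, $e=|E(F)|$, and $\out(F)=d-2e$.

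The plan is to show that $F$ cannot contain too many edges. Choose a spanning tree $T$ of $F$; then $d_J(T)=d$ and $|E(T)|=s-1$. By Corollary~\ref{nosmallexcess}, with $C$ at least the constant there, whp every such tree satisfies $d>(2+\epsilon)(s-1)$, so $\ex(V(F))\ge\frac{\epsilon}{2+\epsilon}d-O(1)$. The excess counts $e-(s-1)$ extra edges plus $\out(F)$, namely $d-2(s-1)=\out(F)+2(e-s+1)$. Hence a large excess means either many outgoing edges, which is what we want, or many cycle edges inside $F$. The hard step is ruling out the second option, i.e.\ showing that whp no connected $F$ in this range has $e-s+1\ge\eta d$ for a small $\eta=\eta(\rho)$. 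This does not follow from Corollary~\ref{nosmallexcess} alone.

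For this I would count again with the exploration of Lemma~\ref{countsmallexcess}. Explore a spanning tree of $F$ using a bit string as in \eqref{treenumber}. Then specify the $k=e-s+1$ extra edges by choosing $k$ pairs among the zero-labelled halfedges, giving at most $\binom{d}{2k}(2k)!$ choices, and bound the conditional probability that these pairs are matched. Using the switching estimate in the proof of Lemma~\ref{nocenterhelpslem}, each prescribed pair is present with conditional probability at most $\frac{1+2\rho/3}{2m^{\neq2}}$, and this holds sequentially while the total degree exposed stays below $\rho m^{\neq 2}/4$. The expected number of such configurations is therefore at most
\[
m^{\neq2}\,2^{d}\left(\frac{e d^2}{2k\,m^{\neq2}}\right)^{k}\cdot\left(\frac{O(1)}{m^{\neq 2}}\right)^{0}.
\]
With $k\ge\eta d$ and $d\le m^{\neq2}/C$, the bracket is at most $(eC^{-1}/(2\eta))$ per unit of $d/k$. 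So for $C$ large relative to $\eta$ this is at most $m^{\neq2}2^{-d}$, and summing over $d\ge C\log m^{\neq2}$ gives $o(1)$. I expect this to be the main obstacle. The delicate part is justifying the sequential edge-probability bound conditioned on the explored structure, including coloured edges, via the switchings of Section~\ref{subsec:coloredconfigmodel}, in both the Case~1 and Case~2 regimes.

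Combining the two bounds, whp every connected $F$ in the range satisfies $\out(F)=d-2(s-1)-2k\ge\left(\frac{\epsilon}{2+\epsilon}-2\eta\right)d-O(1)$. Choosing $\eta=\epsilon/(8+4\epsilon)$ and enlarging $C$ then gives $\cond(F)\ge 1/C$.
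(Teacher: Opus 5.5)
Your argument is correct in outline, but it takes a different route from the paper at the key step. Both proofs start from Corollary~\ref{nosmallexcess}, which gives $\ex(V(F))\ge \eps' d$ with $\eps'=\frac{\eps}{2+\eps}$, and both pay for the choice of spanning tree through \eqref{treenumber}. They differ in how they exclude that this excess is absorbed by chords inside $V(F)$.

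The paper fixes a tree $T\subseteq J$ and switches directly on the number $2i$ of boundary edges. A chord of $J[V(T)]$ and an edge of $J-V(T)$ can be switched into two boundary edges in order $(\ex-2i)m^{\neq 2}$ ways, while at most $(2i)^2\le \ex^2$ switchings go back. So consecutive levels have ratio $O(\ex/m^{\neq 2})=O(1/C)$.

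You instead isolate a structural statement: whp no connected $F$ in the range has cyclomatic number at least $\eta d$. You prove it by a first moment over pairs (tree, chord set), where each of the at least $\eta d$ chords costs a factor $O\bigl(d/(\eta m^{\neq 2})\bigr)=O(1/(\eta C))$ against $O(m^{\neq 2}2^d)$ trees.

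The paper's route needs no probability bound for a prescribed pair of half-edges that both sit at explored vertices; yours does. This is not literally Lemma~\ref{nocenterhelpslem}: there $h'$ lies outside $V(F^-)$, and in Case~2 the upper bound is only derived when $d(v(h'))=1$. It does follow from the same switching, though. Exchange the prescribed chord with an external edge, against at most two reverse switchings:
\begin{itemize}
\item in Case~1, take the external edge green, using Proposition~\ref{specificedgeredoryellowprop};
\item in Case~2, take it avoiding the second neighbourhoods of the chord's endpoints, using the no-center bound.
\end{itemize}

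What your route buys is transparent bookkeeping: the competition between the exponential tree count and the per-chord gain is explicit. In the paper's version, the switching gain has to be accumulated over $\Theta(\ex)$ levels of the boundary count to beat the $(ed/\ex)^{\ex}$ trees. Stopping at $k=\ex/C$, as written, yields only $(C'/C)^{\ex/C}$, which is not enough; telescoping up to $i\approx \ex/4$ repairs it.

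One correction to your count: the number of ways to choose $k$ chords among the zero-labelled half-edges is $\binom{d}{2k}(2k-1)!!$, not $\binom{d}{2k}(2k)!$. Only the former gives your displayed bound $\bigl(\frac{ed^2}{2k\,m^{\neq 2}}\bigr)^k$. The factor $2^k k!$ it saves is what turns $d^2/m^{\neq 2}$ into $d/(\eta m^{\neq 2})$; with $(2k)!$ the estimate fails as soon as $d\gg \sqrt{m^{\neq 2}}$.

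Your restriction to connected $F$ is the right reading of the statement, since a union of many small tree components of $J$ has conductance $0$. It also matches the paper, whose proof only treats subtrees.
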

\begin{proof}
Let $\eps'=\frac{\eps}{2+\eps}$, and for a tree $T$ with $V(T)\subset [n]$ write $d(T):=\sum_{i \in V(T)} d_i=d$.   
We prove the lemma by showing that for $C>0$ sufficiently large, the expected number of subtrees $T$ of $J(G)$ satisfying 
\begin{enumerate}
    \item $C \log m^{\ne 2} \le d(T) \le m^{\ne 2}/C$, 
    \item $\ex(V(T)) \ge \eps' d(T)$, and 
    \item $\cond(V(T)) < 1/C$
\end{enumerate}
tends to $0$ as $n \to \infty$. As observed above, we may restrict our attention to subtrees $T$ with $\ex(V(T)) \ge \eps'  d_J(T)$ as a result of \Cref{nosmallexcess}.

 Recall that for $S \subset V$, we let $\out(S)$ be the number of edges between $S$ and $V-S$. Note that then $$\cond(V(T)) \leq \frac{\epsilon'}{C} \Rightarrow \out(V(T)) \leq \frac{\epsilon' d_J(T)}{C} \Rightarrow \out(V(T)) \leq \frac{\ex(V(T))}{C}.$$
Thus, writing $N$ for the number of subtrees $T$ of $J$ satisfying (1), (2), and $\out(V(T)) \leq \frac{\ex(V(T))}{C}$, it suffices to show that $\e[N]$ tends to $0$ as $n \to \infty$. 

Fix $d$, $\ex$, and a tree $T$ whose vertex set $V(T)$ is a subset of $\{i \in [n]: d_i \ne 2\}$, with $d(T)=d$ and with $\ex(V(T))=\ex$.

Let $\cF_i$ be the set of matchings $M \in \cM$ corresponding to graphs $G$ such that $J(G)$ contains $T$ as a subgraph, and there are exactly $2i$ edges in $J(G)$ from $V(T)$ to $V(J) \setminus V(T)$.
Given an element of $\cF_i$, there are at most $(2i)(2i-1) \leq (\ex)^2$ ordered pairs of boundary edges that can be switched to obtain an element of $\cF_{i-1}$. 

In the other direction, fix an element $M$ of $\cF_{i-1}$ corresponding to a graph $J$, which necessarily contains $T$. For any edges $e,e'$ with $e \in E(J[V(T)])\setminus E(T)$ and $e'$ an edge of $J-V(T)$, there are two choices for a switch to $\cF_i$ unless one of $e,e'$ is not green. If $m^{\ne 2} < \mu^3 m$ then by Proposition~\ref{specificedgeredoryellowprop}, conditional on $J^-$, the probability that both are green is at least $1-\mu^2$. 

Since there are $\frac{\ex - (2i-2)}{2}$ edges in $E(J[V(T)])\setminus E(T)$,  and $m^{\neq 2}-d > m^{\neq 2}-\frac{m^{\neq 2}}{C}$ edges in $E(J - V(T))$, it follows that if $m^{\neq 2}<\mu^3 m$ then the number of switches from $\cF_{i-1}$ to $\cF_i$ is at least
    $$\frac{\ex - 2i+2}{2}\left(1-\frac{1}{C}\right)m^{\neq 2}(1-\mu^2)|\cF_{i-1}|\ .$$

If $m^{\neq 2} \geq \mu^3 m$, we count the number of pairs of edges $e, e'$ that cannot be switched. Given $e \in E(J[V(T)])\setminus E(T)$, the edges $e'$ which cannot be switched with $e$ are those incident to vertices in the second neighborhood of either endpoint of $e$. By our assumption that $\cD$ does not have a center, there are at most $\sum_{n-d_n+1}^n d_i \leq \frac{\rho m^{\neq 2}}{2}$ such edges $e'$. In this case, the number of switches from $\cF_{i-1}$ to $\cF_i$ is at least
$$\frac{\ex - 2i+2}{2}\left(1-\frac{1}{C}\right)m^{\neq 2} - (\ex-2i+2)\frac{\rho m^{\neq 2}}{4} = (\ex-2i+2)m^{\neq 2}\left(\frac12\left(1-\frac{1}{2C}\right) - \frac{\rho}{4}\right)\ .$$

Letting $C' = (2\min\left\{\frac12\left(1 - \frac{1}{C}\right)(1-\mu^2), \frac{1}{2}\left(1 - \frac{1}{2C}\right) - \frac{\rho}{4}\right\})^{-1}$, then it follows from the above case analysis that
    $$\frac{|\cF_{i-1}|}{|\cF_i|} \leq \frac{C'}{2}\cdot\frac{\ex^2}{(\ex-2i+2)m^{\neq 2}}$$
Applying this to $i \leq \frac{\ex}{4}+1$ gives
    $$\frac{|\cF_{i-1}|}{|\cF_i|} \leq \frac{C'}{2} \cdot \frac{\ex^2}{\frac{1}{2}(\ex) m^{\neq 2}} = C'\left(\frac{\ex}{m^{\neq 2}}\right)$$
and in particular, for $k \leq \frac{\ex}{4}+1$,
    $$\sum_{i \leq k} |\cF_i| \leq 2|\cF_{2k}| \left(C' \frac{\ex}{m^{\neq 2}}\right)^k\ .$$
Taking $k = \frac{\ex}{C}$, it then follows that
    $$\p{\out(V(T)) \leq \frac{\ex}{C}\ |\ T \subseteq J} = \frac{\sum_{i\leq \ex/C} |\cF_i|}{\sum_{i=0}^{m^{\neq 2}} |\cF_i|} \leq \frac{\sum_{i\leq \ex/C}|\cF_i|}{|\cF_{2\ex/C}|} \leq 2\left(C'\frac{\ex}{m^{\neq 2}}\right)^{\ex/C}$$
As $\ex < d$ and by our assumption $d < \frac{m^{\neq 2}}{C}$, the right-hand expression is at most $2 \left(\frac{C'}{C}\right)^{\ex/C}$. 
It follows that 
\begin{align*}
\E N& \le \sum_{d = C\log m^{\neq 2}}^{m^{\neq 2}/C}\sum_{\ex = \epsilon'd}^{d-1} \sum_T \p{(T \subset J) \wedge (\out(V(T)) \leq \frac{\ex}{C})}\, ,
\end{align*} 
where the inner sum is over trees $T$ with $d(T)=d$ and $\ex(V(T)) = \ex$. 
We may rewrite the inner sum as 
\begin{align*}
 & \sum_{\{T:d(T)=d,\ex(V(T))=\ex\}}\E{\I{T\subset J}\I{\out(V(T)) \le \frac{\ex}{C}}\big\}} \\
 & = \sum_{\{T:d(T)=d,\ex(V(T))=\ex\}}\p{T\subset J}\cdot 
 \p{\out(V(T)) \leq \frac{\ex}{C}\ |\ T \subseteq J} \\
 & \le 
 2 \left(\frac{C'}{C}\right)^{\ex/C}\cdot \E{\#\{T\subset J:d(T)=d,\ex(V(T))=\ex \}} \\
 & \le 
 2 \left(\frac{C'}{C}\right)^{\ex/C}\cdot
 m^{\ne 2} \left(\frac{ed}{\ex}\right)^{\ex}\, ,
\end{align*}
the last bound holding by \eqref{treenumber}.

By taking $C$ large enough, and noting that $C'$ is bounded from above as $C$ becomes large, we therefore obtain that 
\begin{align*}
    \e[N]&\le  \sum_{d = C\log m^{\neq 2}}^{m^{\neq 2}/C}\sum_{\ex = \epsilon'd}^{d-1} 2 \left(\frac{C'}{C}\right)^{\ex/C}\cdot
 m^{\ne 2} \left(\frac{ed}{\ex}\right)^{\ex} \\
    & \le (m^{\neq 2})^42^{-\ex/C}\ .
\end{align*}
Lastly, since $\ex>\epsilon' d > \epsilon'  C \log m^{\neq 2}$ and since $m^{\ne 2} \to \infty$ as $\ell \to \infty$, for $C$ large enough the final expression tends to 0 as $\ell \to \infty$.
\end{proof}

\subsection{Bounding the conductance of sets which intersect the kernel }
\label{subsection:kernel}

In order to lower-bound the conductance of sets  of vertices of $G$ which intersect the kernel, 
we consider  (i) the number of edges leaving  subsets of vertices of the kernel, (ii) the size of 
the  decorations hanging off its vertices,  and (iii) the size of the  decorated paths corresponding to its edges.

\subsubsection{Bounding the size of decorations and decorated paths}

We begin by showing that high-degree vertices must lie in the kernel $K$ of $G$, and that their degrees when restricted to the kernel remain large.

\begin{lem}
\label{highdegreeinthekernel}
    With high probability, every $v$ with $d_J(v)> \frac{m^{\neq 2}}{\log^2 m^{\neq 2}}$ is in the kernel and satisfies $d_K(v)>\frac{\rho^2\mu^6d(v)^2}{2^{12}m^{\neq 2}}$.
    \end{lem}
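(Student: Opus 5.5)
Fix a vertex $v$ with $d:=d_J(v)>\frac{m^{\neq 2}}{\log^2 m^{\neq 2}}$. The plan is to show that the number of halfedges at $v$ whose edge in $J$ leads into a decoration, rather than to the core, is small. Then $d_K(v)$, which counts the non-decoration edges at $v$, is large, and in particular $v$ lies in the core. Since there are at most $2m^{\neq 2}\log^2 m^{\neq 2}/m^{\neq 2}=2\log^2 m^{\neq 2}$ such vertices, a union bound needs a per-vertex failure probability of $o(\log^{-2} m^{\neq 2})$. In fact the switching bounds below give something exponentially small in $d^2/m^{\neq 2}$, which is at least $m^{\neq 2}/\log^4 m^{\neq 2}$.

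\textbf{Step 1.} First I count edges at $v$ going to vertices of degree at least $3$, or more precisely to vertices from which the rest of the graph has many halfedges to reach. As in Lemma~\ref{nocenterhelpslem}, a switching comparison shows that each halfedge of $v$ is matched to a halfedge at a vertex of degree $d_i$ with probability roughly proportional to $d_i/2m^{\neq 2}$.

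\textbf{Step 2.} I use the structure of the no-center case to get a large supply of pairs of edges from $v$ that reach a common region. The hypothesis $R_{\cD}\ge\rho m^{\neq 2}$ and the no-center assumption give this in both cases. If $m^{\neq 2}<\mu^3 m$, then by Proposition~\ref{specificedgeredoryellowprop} almost all edges are green. Otherwise the second neighbourhoods are small. The concrete plan is to show that whp at least $\frac{\rho\mu^3 d}{8}$ of $v$'s edges go to vertices lying in the giant/core structure. Two edges from $v$ into the same component of $J-v$ both lie on a cycle through $v$, so they are in the core. Hence it suffices to show that few edges from $v$ go to components of $J-v$ that receive exactly one edge from $v$ and are trees.

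\textbf{Step 3.} I bound the number of pendant trees hanging at $v$ via a switching argument. Let $\cF_i$ be the set of graphs in which exactly $i$ edges at $v$ lead to decorations. Take such a decoration edge $vx$ together with an edge $ab$ on which the exploration is supercritical, namely an edge with both ends away from $v$ and its neighbours. Switching $vx,ab$ replaces the decoration edge by an edge from $v$ into the rest of the graph, and it turns $x$'s tree into a subtree hanging from $a$. This gives roughly $i\cdot\rho\mu^3 m^{\neq 2}$ switches from $\cF_i$ to $\cF_{i-1}$. In the reverse direction, creating a new pendant edge at $v$ requires using an edge at $v$ together with an edge whose removal disconnects a tree. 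A cruder bound of this count is about $d\cdot m^{\neq 2}$, which is not enough. What I would actually do is use the density estimate: the fraction of edges of $J$ cutting off a tree of size $s$ decays, by Lemma~\ref{countsmallexcess}. This gives that the ratio $|\cF_i|/|\cF_{i-1}|$ is below $1/2$ once $i$ exceeds $d(1-\frac{\rho^2\mu^6 d}{2^{12}m^{\neq 2}})$.

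The main obstacle is the weak quadratic bound $d^2/m^{\neq 2}$. It suggests the true mechanism is different: two edges from $v$ landing in the same component, or returning to $v$. Among $d$ edges, the number of pairs that meet each other is about $d^2/m^{\neq 2}$. So I would instead lower-bound $d_K(v)$ by counting cycles through $v$. Expose the $d$ edges from $v$ and grow each neighbour's exploration for a constant number of steps, using the positive drift from Lemma~\ref{nocenterhelpslem}, until each surviving exploration has $\Theta(\rho)$ open halfedges. This yields about $\rho\mu^3 d$ surviving branches with $\Omega(\rho d)$ open halfedges in total. The expected number of branches whose halfedges match into another branch is then of order $\rho^2\mu^6 d^2/m^{\neq 2}$. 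Concentration via switchings, with disjoint cycles found greedily, gives at least half of that whp. Each such branch contributes an edge of $K$ at $v$, which gives the bound in the statement with the constant $2^{-12}$.
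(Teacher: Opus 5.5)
\textbf{There is a genuine gap when $m^{\neq 2}<\mu^3 m$.} Your Step~2 reduction is fine: an edge at $v$ is a non-kernel edge iff it leads to a tree component of $J-v$ receiving only that edge. The union bound is also fine. In the case $m^{\neq 2}>\mu^3 m$, your final mechanism (two neighbours of $v$ whose spare halfedges meet) is essentially the paper's, which switches $xz,wy$ to create a triangle $vxy$ on two non-kernel neighbours of degree at least three. When $m^{\neq 2}<\mu^3 m$, however, nothing bounds $d(v)$, and $v$ can carry all of $R_{\cD}$. For example, let $\cD^{\neq 2}$ be one vertex $v$ of degree $\rho m^{\neq 2}$ plus $(2-\rho)m^{\neq 2}$ vertices of degree $1$, with $n_2\gg m^{\neq 2}/\mu^3$. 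All standing hypotheses hold: there is no $\mu$-center, $R_{\cD}=\rho m^{\neq 2}$, and there is a giant through $v$. Yet every neighbour of $v$ in $J$ other than $v$ itself is a leaf, so there are no surviving branches and no inter-branch collisions. Correspondingly, Lemma~\ref{nocenterhelpslem} gives no drift, because its hypothesis $\sum_{i\in F^-}d_i\le\rho m^{\neq 2}/4$ fails as soon as $v\in F^-$. In this example every kernel edge at $v$ comes from a loop of $J$ at $v$. You mention \emph{returning to $v$} heuristically, but the count you actually run does not include it. The missing idea is the paper's switching for this case. On the event that at least $\rho\mu^3 d(v)/32$ edges at $v$ are green (Proposition~\ref{specificedgeredoryellowprop} plus Chernoff), switch two green non-kernel edges $vx,vy$ into a green loop at $v$ plus an edge $xy$. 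This adds exactly two kernel edges at $v$ and needs nothing from the neighbours.

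\textbf{The concentration step is only asserted.} Even where your mechanism is right, the step that carries the proof is the claim that \emph{concentration via switchings gives at least half of that whp}, with failure probability $o(\log^{-2}m^{\neq 2})$. You give no switching for it. The exploration and drift apparatus is also not needed to get it. The paper works directly with $\cF_i$, the graphs with exactly $i$ kernel edges at $v$. On the Good event, the loop (or triangle) switching gives at least $(\rho\mu^3 d(v)/32-i)^2$ switches from $\cF_i$ to $\cF_{i+2}$, against at most $(i+2)m$ back. Hence $|\cF_{i+2}|>2|\cF_i|$ for $3\le i\le \rho^2\mu^6 d(v)^2/(2^{12}m^{\neq 2})$, which gives a failure probability exponentially small in $d(v)^2/m^{\neq 2}$.

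\textbf{Do not fall back on your Step~3.} It needs $\Omega(m^{\neq 2})$ \emph{supercritical} edges $ab$ that turn $vx$ into a core edge, which presupposes a large core. The paper proves that only afterwards (Lemma~\ref{d(K)lower bound}), and that proof uses this very lemma. The threshold $d(1-\rho^2\mu^6 d/2^{12}m^{\neq 2})$ you state there is also not derived from anything.
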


\begin{proof}
    There are at most $2\log^2 m^{\neq 2}$ such  high-degree vertices.   
    So it is enough to show that for a specific such vertex $v$, 
    the probability that $v$ is not in the kernel or that $d_K(v) \le \frac{\rho^2\mu^6d(v)^2}{2^{12}m^{\neq 2}}$ is  $o(\log^{-2} m^{\neq 2})$. 
    In other words, let $\cF_{i}$ be the family of matchings that correspond to graphs $G$ such that in $J(G)$, the 
    number of edges incident to $v$ which lie on paths  corresponding to edges of the kernel is at most $i$.   We want to show $\p{\cup_{i \le \frac{\rho^2\mu^6d(v)^2}{2^{12}m^{\neq 2}}} \cF_{i} } = o(\log^{-2}m^{\neq 2})$. In order to do so, we exploit the fact that if $v$ is in the kernel
    then any edge that is incident to $v$ that is part of a cycle must also be in the kernel.

    We first define an event $Good$ such that $\p{(Good)^c \cap  \left(\bigcup_{i \le \frac{d(v)^2}{10m^{\neq 2}}} \cF_{i} \right)} \leq \p{(Good)^c} = o(\log^{-2}m^{\neq 2})$. If $m^{\neq 2} \le \mu^3 m$, let $Good$ be the event that 
    there are at least $\frac{\rho\mu^3 d(v)}{32}$ endpoints of green edges incident to $v$.
    By Proposition~\ref{specificedgeredoryellowprop}, $\p{Good} \leq \p{X < \frac{\rho\mu^3 d(v)}{32}}$ where $X \sim \mathrm{Bin}(d_J(v), 1-\frac{\mu^2}{3})$. Applying Chernoff bounds, we obtain that  
     $\p{Good} \ge 1-o(\log^{-2} m^{\neq 2})$. 

    If $m^{\neq 2} > \mu^3 m$, let $Good$ be the event that 
    at least $\frac{\rho\mu^3 d(v)}{32}$ neighbours of $v$ have degree at least three.
    We claim that again $\p{Good} \ge 1-o(\log^{-2} m^{\neq 2})$. To see this we consider the family ${\cal K}_i$ of matchings in which $v$ is incident to $i$ vertices of degree at least three. By the standing conditions of this section, there are at least $\frac{\rho m^{\neq 2}}{2}$ edges incident to vertices of degree three, and the fact that the degree sequence has no center means at most $\frac{\rho m^{\neq 2}}{4}$ of these are incident to the closed neighborhood of $v$. So, there are at least $\frac{\rho m^{\neq 2}(d(v)-i)}{4}$ switchings from ${\mathcal K}_{i+1}$ to ${\mathcal K}_{i}$. On the other hand there are at most $(i+1)m$ switches in the other direction.
    So, for $i \le \frac{\rho\mu^3 d(v)}{16}$, we have $|{\mathcal K}_i|>2|{\mathcal K}_{i+1}|$, and the claim follows. 
   
    What remains is to show that $\p{Good \cap \cup_{i \le \frac{d(v)^2}{10m^{\neq 2}}} \cF_{i} }=o(\log^{-2} m^{\neq 2})$.
    To this end, for $i \ge 3$ (which implies $v$ is in the kernel), we consider switchings from $Good \cap {\mathcal F}_i$ to $Good \cap {\mathcal F}_{i+2}$. 

    If $m^{\neq 2} \le \mu^3 m$, given a matching in $Good \cap \cF_i$, we can switch any two green edges incident to $v$ not in the kernel,
     to obtain a green loop 
    on $v$. By the definition of $Good$ in this case, there are at least $(\frac{\rho\mu^3 d(v)}{32}-i)^2$ such switchings. On the other hand, there are 
    at most $(i+2)m$ switches in the other direction.

    If $m^{\neq 2} >\mu^3 m$, given a matching in $Good \cap \cF_i$, there are at least $(\frac{\rho\mu^3 d(v)}{32}-i)^2$ pairs $(x,y)$ 
    where both $x$ any $y$ are outside the kernel,  are incident to $v$ and have degree at least three. We obtain a graph in $\cF_{i+2}$.
    which contains the triangle $vxy$ by switching on an  edge $xz$ and an edge $wy$. 
     On the other hand, to switch in the other direction, we must use an edge which forms a component of $G[N[v]]$
    so there are at most $(i+2)m$ switchings in the other direction.

    So in either case for $3 \leq i \leq  \frac{\rho^2\mu^6d(v)^2}{2^{12}m^{\neq 2}}$, we have $|Good \cap {\mathcal F}_{i+2}|>2|Good \cap {\mathcal F}_i|$.
    Now $\cF_{1}$ and $\cF_{2}$ are empty by the definition of the kernel, and a similar argument shows that the
    size of $\cF_{0}$ is less than the size of $\cF_{4}$. 
    We are done by applying \Cref{switchingineq}. 
    
\end{proof}

\begin{lem}
\label{decorationsize}
    There exists $C=C(\rho)>0$ such that with high probability, each decoration, decorated path, and unicyclic component contains at most
    $C \log m^{\neq 2}$ edges of $J$.  
\end{lem}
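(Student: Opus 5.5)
The three objects in the statement are all built from trees or near-trees in $J$ that have small total degree but small excess. So the plan is to reduce everything to counting such sparse connected subgraphs of $J$, and then apply Lemma~\ref{countsmallexcess}, Corollary~\ref{nosmallexcess} and Lemma~\ref{lem:sparse-conductance}.

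\textbf{Decorations.} A decoration $D$ attached at a core vertex $x$ is a tree hanging off $x$ by a single edge. In $J$ its vertices (those of degree $\ne 2$) together with the edge to $x$ form a subtree $T$ of $J$. Since $D$ contains no cycle and meets the rest of $J$ only through that one edge, we have $\out(V(T))\le 1$ and $\ex(V(T))\le 1$ (for $T$ not including $x$). Suppose $T$ had $a\ge C\log m^{\ne 2}$ edges. Because every vertex of $T$ has degree at least $1$ and $d_J(T)=2a+\ex-2\le 2a$, we get $d_J(T)\le(2+\eps)|V(T)|$. If in addition $d_J(T)\le m^{\ne2}/C$, Corollary~\ref{nosmallexcess} rules this out. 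To remove the upper cutoff, I would argue that any decoration with more than $C\log m^{\ne 2}$ edges contains a subtree $T'$, obtained by truncating a breadth-first exploration from the root, with $d_J(T')$ in the window $[C\log m^{\ne 2},\, 2C\log m^{\ne 2}]$. Such a $T'$ still has boundary consisting only of the edges to the rest of the decoration, and it is still a tree with $d_J(T')\le (2+\eps)|V(T')|$. Corollary~\ref{nosmallexcess} (or directly the first-moment bound of Lemma~\ref{countsmallexcess}) excludes it. There is one point to check here: the truncation must keep the total degree within a factor of the target. Vertices of $J$ in decorations have degree below $m^{\ne2}/\log^2 m^{\ne2}$ by Lemma~\ref{highdegreeinthekernel}, since high-degree vertices lie in the kernel. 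Even so, one vertex could push $d_J$ past the window. I would handle that by noting that a vertex of degree $d$ in a decoration forces $d-1$ subtrees, so total degree at most twice the edge count still holds, and we may instead use $a$, the number of edges, as the parameter in Lemma~\ref{countsmallexcess}, which only needs $a\le m/B$.

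\textbf{Unicyclic components.} The same argument applies with excess $2$: delete one cycle edge to get a spanning tree $T$ with $d_J(T)\le 2a+2\le(2+\eps)(a+1)$, and truncate as before.

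\textbf{Decorated paths.} A decorated path corresponds to a kernel edge $uv$. Its $J$-part is a path of degree-$\ge3$ (in $J$) vertices from $u$ to $v$, together with the decorations hanging off interior vertices. Deleting $u$ and $v$ leaves a tree $T$ (the interior path plus decorations) whose only external edges are the two path-ends, so $\ex(V(T))\le 2$. This is again a sparse tree, and the same truncation and first-moment argument bounds its number of edges by $C\log m^{\ne2}$.

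\textbf{Main obstacle.} The main obstacle is the uniformity of the truncation step, that is, producing a subtree whose size lies in a fixed logarithmic window with excess still at most $\eps$ times its degree. My first attempt would be to take the exploration order used in \eqref{treenumber} and stop after exactly $a_*=\lceil C\log m^{\ne 2}\rceil$ edges. I would then apply the first-moment bound of Lemma~\ref{countsmallexcess} to such prefix trees. These have $d_J\le(2+\eps)a_*$ whenever the whole object is sparse, because every vertex except the boundary vertices contributes exactly its tree degree. The resulting expectation is $m^{\ne2}2^{-\gamma a_*}=o(1)$ for large $C$, and a union over the three types finishes the proof.
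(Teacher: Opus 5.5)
There is a genuine gap in your truncation step. The reduction to Corollary~\ref{nosmallexcess} for sparse trees with $d_J$ in the window $[C\log m^{\neq 2}, m^{\neq 2}/C]$ is fine, but the step that is supposed to handle larger objects fails.

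A prefix $T'$ of a breadth-first exploration of a decoration is not sparse. Here $d_J(T')$ counts the full $J$-degree of every vertex of $T'$, so $d_J(T') = 2|E(T')| + \out_J(V(T'))$. The term $\out_J(V(T'))$ is the number of edges from $T'$ to the unexplored part of the decoration, i.e.\ the exploration queue, and it can be linear in $|E(T')|$. For instance, if the first $a_*$ explored vertices all have $J$-degree $3$, then $d_J(T')\approx 3a_*$. In that case neither Corollary~\ref{nosmallexcess} nor Lemma~\ref{countsmallexcess} applies, since the latter needs $d_J(T)\le (2+\eps)a$. Your justification, ``every vertex except the boundary vertices contributes exactly its tree degree'', is exactly where this breaks: the boundary vertices need not be few, and sparsity of the whole decoration says nothing about a prefix. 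Your remark that a degree-$d$ vertex forces $d-1$ subtrees only controls the total degree of the entire decoration, not of $T'$. The same problem affects decorated paths and unicyclic components, where you again invoke ``the same truncation''.

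The missing idea is to truncate only along sets whose boundary stays small. For decorations, use pendant subtrees, i.e.\ a vertex together with all its descendants. Each has a single outgoing edge and is therefore sparse. Take the lowest vertex $v_i$ whose pendant subtree has at least $m^{\neq 2}/C$ vertices. Each child's pendant subtree is smaller than that, so by the gap it has at most $C\log m^{\neq 2}$ vertices. Since $v_i$ is not in the kernel, Lemma~\ref{highdegreeinthekernel} gives $d(v_i)\le m^{\neq 2}/\log^2 m^{\neq 2}$. Hence the pendant subtree at $v_i$ has at most $1+Cm^{\neq 2}/\log m^{\neq 2}<m^{\neq 2}/C$ vertices, a contradiction. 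This is where the degree bound you quoted actually does its work.

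The same argument, applied to a non-kernel core vertex together with all its decorations (only its two core edges leave this set), bounds the total decoration mass at each core vertex by $C\log m^{\neq 2}$. Then treat a decorated path by taking subpaths together with all their hanging decorations. These have only two outgoing edges, so they are sparse. Extending a subpath by one vertex adds at most $C\log m^{\neq 2}+1$ vertices, so if the decorated path is too big, some subpath lands in $[C\log m^{\neq 2}, 2C\log m^{\neq 2}+1]$, contradicting Corollary~\ref{nosmallexcess}. Unicyclic components are handled the same way: first bound the trees hanging off each cycle vertex, then take subpaths of the cycle. This two-step structure is what the paper's proof does.
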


\begin{proof}
Each decoration, decorated path, unicyclic component of $G$, and vertex not in the kernel along with the decorations hanging off of it corresponds to a connected subgraph of $J$ 
with excess less than $2$. \Cref{nosmallexcess} thus implies that whp, no such subgraph contains between $C \log m^{\neq 2}$ and $\frac{m^{\neq 2}}{C}$ edges of $J$ for some $C = C(\rho) > 0$.

Let $v$ be a vertex  in  the core  of $J$ but not the kernel, and suppose for sake of contradiction that the total number of vertices contained in decorations at $v$ is at least $\frac{m^{\neq 2}}{C}$. Let $T_1, \dots, T_{d(v)}$ be the tree decorations at $v$, and let $v_i$ be a lowest vertex in $T_i$ such that the subtree rooted at $v_i$ has at least $\frac{m^{\neq 2}}{C}$ vertices. Then, the subtrees rooted at the children of $v_i$ must have at most $C\log m^{\neq 2}$ vertices each, by assumption. By \Cref{highdegreeinthekernel}, we have $d(v_i) \leq \frac{m^{\neq 2}}{\log^2 m^{\neq 2}}$, which implies that there are at most $\frac{Cm^{\neq 2}}{\log m^{\neq 2}}$ vertices covered by the subtrees rooted at the children of $v_i$. But this is a contradiction to the choice of $v_i$.

Reapplying \Cref{nosmallexcess} we see that whp for every  
vertex  $v$ not in the kernel but in the core, the total number of vertices in the decorations hanging off $v$ is at 
most $C \log m^{\neq 2}$.    So whp, if a decorated path in $G$ contains more
than $C\log m^{\neq 2}$ edges  of $J$, then either it or a subpath of it is 
a tree with excess $2$ containing between $C \log m^{\neq 2}$  and $2C \log m^{\neq 2}$ vertices. 
\Cref{nosmallexcess} thus implies that whp there is no such  decorated path.

A similar  two-step argument applies to the unicyclic components. For each such component $K$ with cycle $Y$, we first bound the total size 
of the  components of $K-Y$  hanging off a vertex of $Y$ and then consider the subpaths of $Y$.  

 \end{proof}

\begin{lem}
\label{decorationsize2}
    There exist $\alpha>0$ and $l_0>0$  such that whp   for all $\ell>l_0$,  
     the number of edges   in 
    the decorations, decorated paths, and unicyclic components of $J$ containing more than $\ell$ edges of $J$  is at most  $2^{-\alpha \ell}m^{\neq 2}$. 
\end{lem}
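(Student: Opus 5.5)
Each decoration, decorated path and unicyclic component is a connected subgraph of $J$ of excess less than $2$. So the plan is to bound, in expectation, the number of edges of $J$ lying in such low-excess structures of size about $\ell$, then sum over sizes and apply Markov's inequality. The key input is the first part of Lemma~\ref{countsmallexcess}. For $a_0\le a\le m/B$, the expected number of vertices lying in trees $T\subseteq J$ with $a$ edges and $d_J(T)\le(2+\epsilon)a$ is at most $m^{\neq 2}2^{-\gamma a}$.

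First I will associate to each structure $S$ with $k>\ell$ edges of $J$ a spanning tree $T_S$ of the corresponding subgraph of $J$. For a decoration this is the tree itself. For a decorated path I delete one edge of the underlying path; for a unicyclic component I delete one cycle edge. Excess at most $2$ (in the sense $d-2|V|+2$ on the vertex set) gives $d_J(T_S)\le 2|E(T_S)|+O(1)\le (2+\epsilon)|E(T_S)|$ once $|E(T_S)|\ge a_0$. I also need $|E(T_S)|\le m/B$. By Lemma~\ref{decorationsize}, whp every such structure has at most $C\log m^{\neq 2}$ edges, so this holds for large $m$. Thus every edge counted in the statement lies in some $T_S$ with $a=|E(T_S)|\in[\ell-1,\,C\log m^{\neq2}]$ satisfying the hypotheses of Lemma~\ref{countsmallexcess}. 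The structures in question are edge-disjoint, or nearly so: a decoration hanging off a decorated path is counted inside that path. So the total number of such edges is at most $\sum_a (a+1)\cdot\#\{\text{such trees with $a$ edges}\}$.

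Taking expectations and using the lemma gives an expected value of at most $\sum_{a\ge \ell-1} 2m^{\neq2}2^{-\gamma a}=O(m^{\neq 2}2^{-\gamma\ell})$. I will then take $\alpha=\gamma/3$, say. By Markov's inequality, the probability that the count for a given $\ell$ exceeds $2^{-\alpha\ell}m^{\neq2}$ is $O(2^{-2\gamma\ell/3})$. A union bound over all $\ell>l_0$ gives failure probability $O(2^{-2\gamma l_0/3})$, which is small but not $o(1)$ for fixed $l_0$.

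The main obstacle is making this a genuine ``whp for all $\ell>l_0$'' statement. To fix it I will split the range of $\ell$. For $\ell\ge \ell_1(m)$, where $\ell_1=\ell_1(m)\to\infty$ slowly, the union bound above gives $o(1)$. For $l_0<\ell<\ell_1$ I will instead use concentration: for $\ell\le\log\log m$, the second (moment) statement of Lemma~\ref{countsmallexcess} about ordered $\lceil\log m\rceil$-tuples of vertices in disjoint trees controls high moments. This yields failure probability $(e\,2^{-\gamma\ell/3})^{\log m}=o(m^{-1})$ per $\ell$, which is summable over the finitely many relevant $\ell$. A second technical point is checking that the excess bound really gives $d_J(T_S)\le(2+\epsilon)a$ for the vertex sets involved, since deleting one edge from a unicyclic structure changes $d$ by $2$. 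This needs only $a\ge 4/\epsilon$, so I will choose $l_0\ge \max(a_0,4/\epsilon)+1$.
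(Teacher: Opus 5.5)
Your proposal is correct and follows the paper's proof. You use the first-moment bound of Lemma~\ref{countsmallexcess} with Markov's inequality for large $\ell$ (the paper takes $\ell\ge\log\log m$), and the $\lceil\log m\rceil$-th moment statement of the same lemma for the remaining $\ell\le\log\log m$. Your extra bookkeeping only makes explicit steps the paper leaves implicit: the spanning trees of the structures, the excess check, and capping sizes at $C\log m^{\neq 2}$ via Lemma~\ref{decorationsize}, which does not depend on the present lemma.
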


\begin{proof}
    For $\ell \ge \log \log m$, this follows immediately from Lemma \ref{countsmallexcess} and Markov's Inequality. 
    For $\ell \le \log \log m$, letting $X$ be the number of  decorations, decorated paths, and unicyclic components  of size at most $\ell$,
    Lemma \ref{countsmallexcess} implies that $E[X^{\lceil \log m \rceil}] \le  (m^{\neq 2} 2^{-\gamma \ell})^{\lceil \log m \rceil}$.
    Thus, $\p{X>2m^{\neq 2} 2^{-\gamma \ell}}<2^{-\lceil \log m \rceil}$. 
\end{proof}

\subsubsection{Lower-bounding the number of edges of the kernel}
Let $K$ denote the kernel of $G$, which has minimum degree at least 3. Let $k=|V(K)|$,  and let $d(K) \geq 3|V(K)|$ be the sum over vertices in $K$ of their degrees restricted to $K$.

We first obtain a lower bound on the fraction of edges that lie in the kernel. 
\begin{lem}
\label{d(K)lower bound} 
There exists $\gamma =\gamma(\rho)>0$ such that whp, $d(K) \ge \gamma m^{\neq 2}$. 
\end{lem}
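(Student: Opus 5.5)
Since $K$ is the homeomorphic reduction of the core, I will prove that the core of $J$ carries a linear number of edges of $J$. It then remains to show that these edges are not lost when paths of degree-two vertices in the core are suppressed.

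\textbf{Step 1: an accounting identity.} Every edge of $J$ lies in one of three places:
\begin{itemize}
\item a decoration;
\item a unicyclic or tree component;
\item a decorated path corresponding to an edge of $K$.
\end{itemize}
Each edge of $K$ corresponds to exactly one decorated path, so
\[
m^{\neq 2} = \#\{\text{edges of $J$ in decorations and non-kernel components}\} + \sum_{e \in E(K)} |\text{decorated path of } e|_J .
\]
Lemma~\ref{decorationsize} says that whp every decorated path contains at most $C\log m^{\neq 2}$ edges of $J$. Lemma~\ref{decorationsize2} gives a much stronger statement: the total number of $J$-edges lying in decorated paths with more than $\ell$ edges is at most $2^{-\alpha\ell}m^{\neq 2}$. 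Consequently, if the first term is at most $(1-\beta)m^{\neq 2}$ for some constant $\beta>0$, then we fix a large constant $\ell$. The kernel edges whose decorated paths have at most $\ell$ $J$-edges then account for at least $(\beta - 2^{-\alpha\ell})m^{\neq 2}$ edges of $J$. Hence $|E(K)| \ge (\beta/2)m^{\neq 2}/\ell$, and we may take $\gamma = \beta/\ell$.

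\textbf{Step 2: a linear number of edges outside decorations and small components.} The main task is therefore to show that whp at least $\beta m^{\neq 2}$ edges of $J$ lie in the core of the non-tree, non-unicyclic part. I would use the excess. The total excess of $J$ is
\[
\sum_{v\in V(J)} (d_v-2) = 2m^{\neq 2} - 2|V(J)|.
\]
This quantity is linear only when enough degree is carried by high-degree vertices, and that is exactly what the hypothesis $R_{\cD}\ge \rho m^{\neq 2}$ is designed to guarantee.

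A cleaner route is to use the giant component. By Theorem~\ref{whenhasgiantthm}, whp there is a gigantic component with at least $\epsilon m$ edges of $G$. Corollary~\ref{nosmallexcess} and Lemma~\ref{lem:sparse-conductance} should show that this component has linear excess. Concretely, take a spanning tree of its image in $J$ together with BFS pieces. By Corollary~\ref{nosmallexcess}, any connected piece with total degree between $C\log m^{\neq 2}$ and $m^{\neq 2}/C$ has excess at least $\epsilon' d$. I would partition the giant's reduction in $J$ into connected pieces of degree roughly $m^{\neq 2}/C$. This partition can be obtained by repeatedly cutting subtrees from a spanning tree, which yields pieces of size in $[m^{\neq 2}/(2C\Delta'), m^{\neq 2}/C]$ once Lemma~\ref{highdegreeinthekernel} is used to handle large degrees. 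Summing the excesses of the pieces gives total excess at least $\epsilon'' \cdot d_J(\text{giant})$. Excess is superadditive up to the connecting edges, and this gives a linear excess, hence linear $d(K)$, since
\[
\ex = d(K) - 2|V(K)| + 2 \le d(K).
\]

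The giant's reduction in $J$ must itself have $\Omega(m^{\neq 2})$ edges. Otherwise, Proposition~\ref{sumoflengthsprop} shows that its paths in $G$ contain only $o(m)$ vertices when $m^{\neq 2}\le \delta m$. When $m^{\neq 2}>\delta m$ the bound is immediate, because $G$-edges and $J$-edges are then comparable.

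\textbf{Main obstacle.} I expect the hardest step to be the decomposition of the giant's $J$-image into connected pieces in the window where Corollary~\ref{nosmallexcess} applies. Vertices of very high degree could make a piece jump past $m^{\neq 2}/C$. I would handle these by placing vertices with $d_J(v) > m^{\neq 2}/\log^2 m^{\neq 2}$ directly into the kernel via Lemma~\ref{highdegreeinthekernel}. If such a vertex has degree at least $\sqrt{m^{\neq 2}}$, its kernel degree alone is $\Omega(d(v)^2/m^{\neq 2})$, which gives the bound outright whenever $d(v)=\Omega(m^{\neq 2})$. Otherwise all degrees are small and the tree-cutting works.
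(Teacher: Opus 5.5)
The reduction you set up is correct, and it is a different route from the paper's. Excess defined with full $J$-degrees satisfies $\ex(A\cup B)=\ex(A)+\ex(B)-2$. So for a partition of the giant component $F$ of $J$ into $k$ connected pieces, $\ex(V(F))=\sum_i\ex(S_i)-2(k-1)$, and $\ex(V(F))=2(|E(K_F)|-|V(K_F)|+1)\le d(K)+2$. Hence a partition into pieces whose $J$-degree sums all lie in the window $[C\log m^{\neq 2},m^{\neq 2}/C]$ of Corollary~\ref{nosmallexcess} would finish the proof, and your Step 1 would be unnecessary.

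\textbf{The gap.} The gap is in producing that partition, and your proposed fix does not close it. Cutting lowest heavy subtrees from a spanning tree is only guaranteed to give window pieces when every degree is at most about $m^{\neq 2}/(C^2\log m^{\neq 2})$. Now take a vertex $u$ with $m^{\neq 2}/(C^2\log m^{\neq 2})\ll d_J(u)=o(m^{\neq 2})$. Suppose $u$ carries decorations (trees attached to the rest of $J$ only through $u$), each of weight below $C\log m^{\neq 2}$, with total weight above $m^{\neq 2}/C$; for instance $d_J(u)=m^{\neq 2}/\sqrt{\log m^{\neq 2}}$ with decorations of weight $\sqrt{\log m^{\neq 2}}$.
\begin{itemize}
\item Only the piece containing $u$ can reach the window, and it absorbs at most $m^{\neq 2}/C$ of this weight.
\item Every other piece inside these decorations is below the window and is only guaranteed $\ex(S_i)\ge 1$, a net contribution of $-1$ to the identity.
\item There can be order $d_J(u)$ such pieces, carrying a constant fraction of the total weight, so no linear bound follows.
\end{itemize}
Lemma~\ref{highdegreeinthekernel} does not rescue this. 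It gives $d_K(u)=\Omega(d_J(u)^2/m^{\neq 2})=o(m^{\neq 2})$ in this range. Since each such obstruction carries pendant weight above $m^{\neq 2}/C$, there are only $O(C)$ of them, so summing over them does not help either. Your dichotomy ``$d(v)=\Omega(m^{\neq 2})$, otherwise all degrees are small and the tree-cutting works'' skips exactly this range.

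\textbf{What is missing.} You need to apply the window bound to $S=\{u\}\cup B$, where $B$ is a batch of $u$'s light pendant subtrees of total weight in $[m^{\neq 2}/2C,\,m^{\neq 2}/C]$. For $S$ connected in the spanning tree, $\ex(S)$ equals the number of tree edges leaving $S$ plus the number of non-tree half-edges in $S$. It is therefore at most $d_J(u)$, plus the non-tree half-edges in $B$, plus the tree edges from $B$ to previously cut pieces (at most one per cut piece). This leaves two cases:
\begin{itemize}
\item $d_J(u)\ge c\,m^{\neq 2}$, in which case Lemma~\ref{highdegreeinthekernel} gives linear $d_K(u)$;
\item $B$ itself carries non-tree edges proportional to its weight, which you add to the global count instead of treating $B$'s subtrees as loss-making pieces.
\end{itemize}
This is precisely the role of the paper's argument for the vertices in $High$. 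There, Lemma~\ref{lem:sparse-conductance} applied to $v$ together with decorations of total size between $m^{\neq 2}/2C$ and $m^{\neq 2}/C$ forces $d(v)\ge m^{\neq 2}/2C^2$. With this repair, your route would replace the paper's closing switching argument over low-degree kernel vertices by a deterministic consequence of Corollary~\ref{nosmallexcess} and Lemma~\ref{highdegreeinthekernel}.

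\textbf{A smaller point.} Your claim that the giant's $J$-image has $\Omega(m^{\neq 2})$ edges is not ``immediate'' when $m^{\neq 2}>\delta m$. There $n_2$ can still be a constant multiple of $m$, so you need a path-length argument like the one in Proposition~\ref{sumoflengthsprop}, not mere comparability of $m$ and $m^{\neq 2}$.
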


\begin{proof} 
By the standing assumptions of the section, there exists $\eps=\eps(\rho)$ such that whp $G$ has a component containing $\epsilon n$ 
vertices.  Applying \Cref{whenhasgiantthm} we obtain 
that  there is an $\epsilon '$ (which is determined by $\epsilon$ 
and hence $\rho$) such that whp there is a component of $J$
with $\epsilon' m^{\neq 2}$ edges. 
We compute how many of these edges are contributed by the kernel versus the decorations and components other than the kernel.

To begin, we compute the number $m^*$ of edges which are 
incident to vertices of the kernel.
 Fixing $\ell$ to be large enough,
 Lemma \ref{decorationsize2} implies that whp
the collection of all unicyclic components, decorations, and decorated paths  
of size exceeding $\ell$ 
spans at most $2^{-\ell\alpha}m^{\neq 2}$ edges. 
We choose $\ell$ so that in addition to this property, we also have $2^{-\ell \alpha} \le \frac{\epsilon'}{8}$. 
Thus any component of $J$ with at least $\epsilon' m^{\neq 2}$ edges is not unicyclic.
Furthermore, the decorations and decorated paths with fewer than $\ell$ edges span at most $\ell m^*$ edges in total. 
This implies that whp  the number of edges of such a component 
 is at most $(\ell+1)m^*+ \frac{\epsilon' m^{\neq 2}}{8}$.
So whp $m^* \ge \frac{7\epsilon' m^{\neq 2}}{8(\ell+1)}$.

We now bound $E(K)$ as a function of $m^*$.
Let $High$ be the set of vertices  of $K$ which have degree at 
least $C \log m^{\neq 2}$. 

If there exists a vertex in $High$ such that the sum of the sizes of the decorations hanging off  it is at least $\frac{m^{\neq 2}}{2C}$
we can choose some  subset of these decorations of size between $\frac{m^{\neq 2}}{2C}$ and $\frac{m^{\neq 2}}{C}$.
Applying  Lemma \ref{lem:sparse-conductance}  to the union of $v$ and these decorations, we obtain that the degree of $v$ is at least $\frac{m^{\neq 2}}{2C^2}$. But now, by Lemma \ref{highdegreeinthekernel},  we are done just by considering the degree of $v$ in $K$. 

Assuming this is not the case,  Lemma \ref{lem:sparse-conductance} 
implies that whp for every vertex $v$ of $High$, $d_K(v) \ge \frac{d(v)}{C}$ so we are done unless $d(High) \le \frac{\epsilon'm^{\neq 2}}{64}$
and hence unless $d(V(K)-High) \ge \frac{7\epsilon' m^{\neq 2}}{64}$.  

Let ${\mathcal F}_i$ be the family of $G$ for which $d(V(K)-High) \ge \frac{m^{\neq 2}}{16}$ and there are $i$
edges of $K$ joining vertices of $V(K)-High$. There are  at least $
(\frac{\epsilon' m^{\neq 2}}{16}-i)(\frac{\epsilon' m^{\neq 2}}{16}-i-C\log m^{\neq 2})/2$
switchings from ${\mathcal F}_i$ to ${\mathcal F}_{i+1}$ using two edges 
from vertices in $V(K)-High$ to vertices in decorations hanging off them. 
There are at most $2(i+1)m^{\neq 2}$ swaps in the other direction.
For $m^{\neq 2}$ sufficiently large this implies that for $i  \le 
\left(\frac{\epsilon'}{64}\right)^2 m^{\neq 2}$, we have $\frac{|{\mathcal F}_i|}{|{\mathcal F}_{i+1}|} \le \frac12 $ and  hence
$\p{\bigcup_{i \le  \left(\frac{\epsilon'}{64}\right)^2 m^{\neq 2}} {\mathcal F}_i}=o(1)$. 

\end{proof}

\subsubsection{Bounding the number of edges  leaving a large set of vertices in the kernel}

 By  Lemma \ref{d(K)lower bound}, whp  $d(K) \ge \mu m^{\neq 2}$. 

In this section, we  bound $\out(W)$ for $W$ a subset of vertices in the kernel with $d(W) <\frac{d(K)}{2}$. 

\begin{lem}
\label{kernelbound}
    Whp every $W \subset K $ with $d(W) \le \frac{d(K)}{2}$  satisfies $\out(W) \ge  \frac{d(W)}{\log^{1/3} d(K)}$.  
\end{lem}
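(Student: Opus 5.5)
We will prove the statement by a first-moment argument over candidate sets $W$, using switchings on the kernel $K$ (exactly as in the proof of Lemma~\ref{lem:sparse-conductance}) to bound the probability that a given $W$ has few outgoing edges. We split into two regimes according to $d(W)$. For $d(W)\le m^{\neq 2}/C$ with $C=C(\rho)$ as in Lemma~\ref{lem:sparse-conductance}, we will not need a new argument. If $W$ is small ($d(W)\le C\log m^{\neq 2}$), then $\out(W)\ge 1$ since $K$ is connected, which we will also show whp. Otherwise, we view $W$ together with the decorated paths and decorations hanging off it as a subgraph of $J$; Lemma~\ref{decorationsize} bounds the extra mass by a factor $O(\log m^{\neq 2})$. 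Lemma~\ref{lem:sparse-conductance} then gives conductance at least $1/C$ in $J$, which translates into $\out_K(W)\ge d(W)/(C'\log m^{\neq 2})$. This is more than the required $d(W)/\log^{1/3}d(K)$ only if we are more careful. So in the middle range we will instead rerun the excess-based switching argument directly on $K$. Every vertex of $K$ has degree at least $3$, so any connected $W$ in $K$ has excess at least $d(W)/3$, and the switching from Lemma~\ref{lem:sparse-conductance} (internal non-tree edge versus edge of $K-W$) gives $\out(W)\ge d(W)/C$ directly. We may restrict to connected $W$, since $\out$ is additive over components of $K[W]$.

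The main regime is $m^{\neq 2}/C\le d(W)\le d(K)/2$, where $d(K)\ge\gamma m^{\neq 2}$ by Lemma~\ref{d(K)lower bound}. Here we fix a spanning tree $T$ of $K[W]$ and count as in \eqref{treenumber}: the number of candidate trees with $d(T)=d$ is at most $m^{\neq 2}\binom{d}{\ex}\le m^{\neq 2}2^{d}$. For a fixed $T$ we consider the families $\cF_i$ of graphs in which exactly $i$ kernel edges leave $V(T)$. To move from $\cF_{i}$ to $\cF_{i+1}$ we switch an internal edge of $K[W]$ not in $T$ with an edge of $K-W$. There are at least about $\frac{d}{6}\cdot\frac{d(K)}{4}$ such pairs, after discarding non-green edges via Proposition~\ref{specificedgeredoryellowprop} or neighbourhood conflicts via the no-center assumption. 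In the reverse direction there are at most $(i+1)^2$ switchings. Hence for $i\le d/\log^{1/3}d(K)$ the ratio $|\cF_{i}|/|\cF_{i+1}|$ is at most $O(i^2/(d\,d(K)))$. Telescoping from $i_0=d/\log^{1/3}d(K)$ up to $2i_0$ gives
\[
\p{\out(V(T))\le i_0 \mid T\subseteq K}\le \Big(\frac{O(d^2)}{\log^{2/3}d(K)\,d\,d(K)}\Big)^{i_0}\le \big(O(\log^{-2/3} d(K))\big)^{d/\log^{1/3}d(K)}.
\]
This equals $\exp(-\Theta(d\log\log d(K)/\log^{1/3}d(K)))$.

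The obstacle is that this decay rate is weaker than the $2^{d}$ entropy of trees, so a naive union bound fails. To handle this, we will first use the decoration and path structure to enumerate $W$ more cheaply. We will also reduce entropy by noting that, in the kernel, it suffices to union-bound over the cut-defining sets rather than over all trees: the number of ways to specify $W$ by its boundary halfedges is at most $\binom{2m^{\neq 2}}{i_0}^2$. Combining this with the switching bound, which compares to $\cF_{i}$ with $i$ of order $d$ (ratio gain $\Omega(d(K)/i)$ per step), should give a probability exponentially small in $i_0\log\log$. If that still falls short, the fallback is to exploit the gap $\log^{1/3}$ versus constant: only $\exp(o(d/\log^{1/3}))$ choices should be needed once $W$ is described through a coarse partition of $K$ into pieces of mass $\log^{1/3}d(K)$, each of which is handled by the small-set bound above.
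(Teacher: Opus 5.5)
Your reduction to connected $W$ and your middle-range rerun of the argument of Lemma~\ref{lem:sparse-conductance} on $K$ (using that minimum degree $3$ gives excess at least $d(W)/3$) are reasonable. There is nonetheless a genuine gap exactly where you place the obstacle, namely $m^{\neq 2}/C\le d(W)\le d(K)/2$, and this is where the content of the lemma lies.

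\textbf{Why the main regime fails.} A first moment over spanning trees costs a factor $e^{\Theta(d)}$ (about $2^{2d/3}$ when all degrees are $3$). Against this, your fixed-tree switching (a non-tree internal edge against an edge of $K-W$) gives only $\exp(-\Theta(d\log\log d(K)/\log^{1/3}d(K)))$ at $i_0=d/\log^{1/3}d(K)$. Telescoping further to $i=\Theta(d)$ gives at best $e^{-cd}$. A rough computation with all vertices of $W$ of degree $3$ and $d(W)\approx d(K)/2$ gives $c\approx 0.375$, below the tree-entropy exponent $\tfrac23\ln 2\approx 0.46$. The cost comes from using only non-tree edges and paying for trees rather than sets.

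\textbf{Why the proposed remedies do not repair it.} A set of boundary halfedges does not specify $W$ before $K$ is revealed. So for each such set, the event you would need to bound is not a fixed-tree or fixed-set event to which your switching estimate applies, and the count $\binom{2m^{\neq 2}}{i_0}^2$ cannot simply be multiplied against it. The coarse-partition fallback is not an argument: the partition depends on the random graph, $W$ need not be a union of its pieces, and bounds on the pieces do not control $\out(W)$.

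\textbf{The small case is also incomplete.} The bound $\out(W)\ge 1$ suffices only when $d(W)\le\log^{1/3}d(K)$. It does not cover $\log^{1/3}d(K)<d(W)\le C\log m^{\neq 2}$. Moreover, the paper derives connectivity of $K$ from this very lemma (Corollary~\ref{kernelalltogether}), so you cannot assume it.

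\textbf{The missing idea.} Stop enumerating trees. Instead, bound for a fixed vertex set $W$ the probability that it is almost closed, with configuration-model strength.
\begin{enumerate}
\item The paper union-bounds over $W\subseteq V(K)$ with $|W|=w$ (at most $\binom{k}{w}$ choices). It also union-bounds over the fewer than $2^{d(W)/100}$ choices of the $j<d(W)/\log^{1/3}d(K)$ boundary halfedges.
\item It then exposes the partners of the remaining halfedges of $W$ one at a time. A switching that swaps the endpoints of two decorated paths shows that each specific pairing has conditional probability at most $\frac{1001}{1000(d(K)-2j-2i+1)}$. Here the absence of a $\mu$-center and $d(K)\ge\gamma m^{\neq 2}$ control the forbidden switches. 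So the kernel matching is nearly uniform.
\item Hence the probability that all these halfedges pair inside $W$ is at most $2^{d(W)/50}\min\{Z_W,(Z'_W)^{99/100}\}$, where $Z_W$ and $Z'_W$ are the corresponding configuration-model products.
\item One has $Z_W\le\binom{2a}{a}^{-1}$ with $a=\lceil d(W)/2\rceil$. Crucially using $d(W)\ge 3w$ in the kernel, one also has $Z'_W\le\binom{k}{w}^{-3/2}$. For $d(W)<\log\log d(K)$ one has $Z'_W\le d(K)^{-4d(W)/9}$.
\item These bounds beat $\binom{k}{w}$ in every range: $w\ge k/2$, $w<k/2$, and $d(W)$ tiny.
\end{enumerate}
This handles all $W$, connected or not and of any size, in one argument. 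It needs no appeal to Lemma~\ref{lem:sparse-conductance} or to connectivity of $K$.
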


\begin{proof}
Applying  \Cref{d(K)lower bound}, it is sufficient to prove the 
lemma conditioned on $d(K) \ge \gamma m^{\neq 2}$, and we impose this conditioning. 
We now switch on  the ordered paths representing the edges of the kernel 
and  consider  a matching  $\cM^K$ of size $|E(K)|$  on the halfedges analogous to $\cM^*$. Note that the unicyclic components,
the vertices of the kernel,
their degrees in the kernel,  and $V(\cM^K)$ are fixed by such a matching. The decorations and which vertices of the kernel they hang off
and the interior of the decorated paths not including the edges incident to vertices of the kernel are also 
fixed (and in particular, we know which edges of $\cM^K$ correspond to edges of $G$ although this is not fixed).

Let $\cC_w = \{W \subseteq K : |W| = w, d(W) \leq \frac{d(K)}{2},\ \out(W)<\frac{d(W)}{\log^{1/3} d(K)}\}$,
and let $X_w = |\cC_w|$. To prove the lemma it is enough to show that $\sum_w \E {X_w}=o(1)$. In order to do so, we bound 
$\p{W \in \cC_w}$ for each $W \subseteq K$ with $|W|=w$.

We compute this probability by summing up the conditional probability that $W \in \cC_w$ given the specific choice of halfedges to be matched between $W$ and $K-W$; call these {\em boundary halfedges}.
There are at most $\sum_{j<\frac{d(W)}{ \log^{1/3} d(K)}} {d(W) \choose j}<2^{\frac{d(W)}{100}}$ choices for boundary halfedges incident to vertices of $W$. For any such set of halfedges, the conditional probability that $W \in \cC_w$ is the 
probability that each remaining halfedge incident to a vertex of $W$ is matched to another halfedge of $W$. We 
claim and prove below that whp  this probability is at most  $2^\frac{d(W)}{50}Z_W$ for 
$$Z_W=  \frac{(1)(3)...(2\lceil d(W)/2 \rceil-1)}{(2\lceil d(W)/2 \rceil+1)(2\lceil d(W)/2 \rceil+3)...(4\lceil d(W)/2 \rceil -1)}\ .$$
and at most  $2^\frac{d(W)}{50}(Z_W')^{99/100}$ for 
$$Z_W'=  \frac{(1)(3)...(2\lceil d(W)/2 \rceil-1)}{(d(K)-2\lceil d(W)/2 \rceil+1)(d(K)-2\lceil d(W)/2 \rceil+3)...(d(K)-1)}\ .$$

Assuming the claim holds, we may now bound $\sum_w \E{X_w}$. 
We treat separately $X'_w$ which sums over $W$ with $d(W) \ge \log \log d(K)$ and $|W|=w \ge k/2$ where $k = |V(K)|$,
$X^+_w$ which sums over $W$ with $d(W) \ge \log \log d(K)$ and $|W|=w < k/2$,
and $X^*_w$  which sums over $W$ with $|W|=w$ and $d(W) < \log \log d(K)$.

Let $\cS_w = \{W \subseteq K : |W| = w, d(W) \geq \log \log d(K)\}.$ First note that
$$\E{X'_w} \leq \sum_{W \in \cS_w} 2^{d(W)/50}Z_W\ .$$

If $2w \ge k$ and $d(W) \ge \log \log d(K)$, to bound $Z_W$, we observe that
$$Z_W = \frac{((2\lceil d(W)/2 \rceil-1)!!)^2}{(4 \lceil d(W)/2 \rceil-1)!!}\ .$$
Using the formula $(2d-1)!! = \frac{(2d)!}{2^d d!}$, 
and that the number of ways of choosing $a$ elements of $\{1,\ldots,2a\}$ and $a$ elements of $\{2a+1,\ldots,4a\}$ is at most the number of ways of choosing $2a$ elements from $\{1,\ldots,4a\}$, implying ${2a \choose a}^2 \le {4a \choose 2a}$,  
we obtain that $$\frac{((2a-1)!!)^2}{(4a-1)!!} = \frac{{2a \choose a}}{{4a \choose 2a}} \leq \frac{1}{{2a \choose a}}\, .$$

 Thus, 
$$\E{X'_w} \leq \sum_{W \subseteq K, |W|=w, d(W) \ge \log \log d(K)} 2^{d(W)/50}\frac{(2 \lceil (d(W)/2 \rceil)!)^2}{\lceil d(W) \rceil!}\leq {k \choose w} 2^{-48d(W)/50}\ .$$
We conclude this case by applying    the fact that $d(W) \geq 3w$ by definition of the kernel and the fact that $k \le 2w$. Since $w \ge k/2$, we have ${k \choose w} \le 2^{2w}$, so we can write 
\begin{align*} \E{X'_w} &\leq 2^{2w-47d(W)/50 - \log \log d(K)/50}\\
    &\leq 2^{-w/2}(\log \log d(K))^{-1}\ .
\end{align*}
Since $d(K)=\omega(1)$  we obtain $\E{\sum_w \ X'_w} =o(1)$.

For the case of $2w < k$ and $d(W)  \ge \log \log d(K)$, the computations proceed analogously to the previous case. We first write
$$\E{X^+_w} \leq \sum_{W \in \cS_w, {k \choose w} > 2^{d(W)/20}} 2^{d(W)/50}(Z_W')^{99/100} + \sum_{W \in \cS_w, {k \choose w} \leq 2^{d(W)/20}} 2^{d(W)/50} Z_W\ .$$
Using the definition of $Z_W'$, the bounds $d(K) \ge 2d(W)$ and $d(W) \geq 3w$, and the fact that the number of ways to choose $w$ elements from each of the sets $\{1, \dots, k\}, \{k+1, \dots, 2k\}$, and $\{2k+1, \dots, 3k\}$ is at most the number of ways to choose $3w$ elements of $\{1, \dots, 3k\}$, implying
${k \choose w}^3<{3k \choose 3w}$, we obtain that
\[
Z_W' \leq \frac{1}{\sqrt{{3k \choose 3w}}} \leq \frac{1}{{k \choose w}^{3/2}}\, .
\]
Using our previous bounds on $Z_W$, we then have
\begin{align*}\E{X_w^+} &\leq \sum_{W \in \cS_w, {k \choose w} > 2^{d(W)/20}}2^{d(W)/50}{k \choose w}^{-297/200} + \sum_{W \in \cS_w, {k \choose w} \leq 2^{d(W)/20}} 2^{-d(W)/50}2^{-d(W)/2}\\
&\leq 2^{-d(W)/1000} + 2^{-d(W)/3}\\
&\leq 2^{1-d(W)/2000} \cdot 2^{-3w/2000}
\end{align*}

Since $d(K)=\omega(1)$, by our assumptions on $w$ for this case, we have $w = \omega(1)$ as well and obtain $\E{\sum_w \ X^+_w} =o(1)$.

To compute $\sum_w E[X^*_w]$,  we proceed in a similar manner, exploiting the fact that $2^\frac{d(W)}{50}<\log d(K)$. Let $\cS'_w = \{W \subseteq K : |W| = w, d(W) < \log \log d(K)\}$. For $W \in \cS'_w$, note that 
\begin{align*} Z_W' &\leq \left(\frac{d(W)-1}{d(K)-d(W)+1}\right)^{d(W)/2} \\
&\leq \left(\frac{\log \log d(K)}{d(K) - \log \log d(K)}\right)^{d(W) / 2}\\ 
&\leq (d(K)^{-8/9})^{d(W)/2} = d(K)^{-4d(W)/9}\ .
\end{align*}
Using this and the fact that $d(W) \geq 3w$, we have
\begin{align*} \E{X^*_w} &\le  \sum_{W \in \cS_w'} \log d(K)(Z_W')^{99/100}\\
&\leq {k \choose w}   (\log d(K)) d(K)^{-11d(W)/25}\\
& \le k^w  (\log d(K)) d(K)^{-33w/25 } \\
&\le (\log d(K)) d(K)^{-8w/25 }\ .
\end{align*}

Since $d(K)=\omega(1)$ and $w \geq 1$, we obtain $\sum_w E[X^*_w]=o(1)$.

It remains to prove our claim.  Conditioned on our choice of $j$ boundary halfedges and their partners in $\cM^K$, we perform $\frac{d(W)-j}{2}$ iterations in which, for some halfedge hanging off a vertex in $W$ which is not yet matched, we
expose its partner in $\cM^K$.

We make a second claim  that given our initial conditioning and the outcome of the first $i-1$   iterations, the probability that the halfedge $h_i$ considered in the $i^{th}$ iteration matches with a halfedge hanging off a vertex in  $W$ is at most $\frac{1001 (d(W)-j-2i+1)}{1000(d(K)-2j-2i+1)}$.

We prove this second claim by showing that the probability  $h_i$ matches with any other halfedge $h'$  hanging off a vertex in $W$
is at most $\frac{1001}{1000(d(K)-2j-2i+1)}$. Let ${\mathcal F}_{yes}$ (respectively $\cF_{no}$) be the collection of matchings $M \in \cM^K$ corresponding to graphs $G$  satisfying our conditioning including  the set of potential boundary halfedges and outcome of the first $i-1$ iterations with $h$ matched to $h'$ (respectively, $h$ not matched to $h'$).

We consider switchings between matchings in ${\mathcal F}_{yes}$ and ${\mathcal F}_{no}$ by swapping the endpoints of two decorated
paths.  There are at most 2 such switchings from an element of ${\mathcal F}_{no}$ to elements of ${\mathcal F}_{yes}$. We note that $d(W) > j+2i$ so $d(K)-2i-2j
>d(K)-d(W)-j>d(K)/3$. So, 
since  the degree sequence has no  $\mu$-center, and $d(K) \ge \gamma m^{\neq 2}$ by Lemma~\ref{d(K)lower bound}, there are at least $2(d(K)-2j-2i+1-\frac{d(K)}{6(10^6)}) >(2-10^{-6})(d(K)-2j-2i+1)  $ switchings from an element of ${\mathcal F}_{yes}$ to elements of ${\mathcal F}_{no}$. Applying \eqref{switchingineq}, this proves our second claim.

It remains to show that  for   $j \le \frac{d(W)}{\log d(K)^{1/3}}$  of the same parity as $d(W)$, we have: 
$$\prod_{i=1}^{(d(W)-j)/2} \frac{1001 (d(W)-j-2i+1)}{1000(d(K)-2j-2i+1)}\le 2^{d(W)/50}\min\{Z,(Z')^{99/100}\}$$
which follows if we show that  
$$\prod_{i=1}^{(d(W)-j)/2} \frac{ d(W)-j-2i+1}{d(K)-2j-2i+1}\le 2^{d(W)/100}\min\{Z,(Z')^{99/100}\}\,.$$

Now  for all $i \leq \frac{d(W)-j}{2}$, note that $d(K)-2j-2i+1>d(W)-j+1 \ge \frac{d(W)}{2}$. 
Since
$j \le \frac{d(W)}{(\log d(K))^{1/3}}$ we then have 
$$\prod_{i=1}^{(d(W)-j)/2} \frac{ d(K)-2i+1}{d(K)-2j-2i+1}\le 
\left(1+\frac{4j}{d(W)}\right)^{(d(W)-j)/2} \le 2^{d(W)/200}.$$

So, it is enough to show
\begin{equation}\label{eqn:boundingZZ'}
\prod_{i=1}^{(d(W)-j)/2} \frac{ d(W)-j-2i+1}{d(K)-2i+1}\le 2^{d(W)/200}\min\{Z,(Z')^{99/100}\}\ .
\end{equation}

Recall the definitions of $Z_W$ and $Z_W'$:
$$Z_W=  \frac{(1)(3)...(2\lceil d(W)/2 \rceil-1)}{(2\lceil d(W)/2 \rceil+1)(2\lceil d(W)/2 \rceil+3)...(4\lceil d(W)/2 \rceil -1)}\ ,$$
$$Z_W'=  \frac{(1)(3)...(2\lceil d(W)/2 \rceil-1)}{(d(K)-2\lceil d(W)/2 \rceil+1)(d(K)-2\lceil d(W)/2 \rceil+3)...(d(K)-1)}\ .$$
We may expand the left-hand side of $\eqref{eqn:boundingZZ'}$ as
$$L := \frac{1(3)\cdots(d(W)-j-1)}{(d(K)-d(W)+j+1)(d(K)-d(W)+j+3)\cdots(d(K)-1)}\ .$$
It is clear, then, that the numerator of $L$ is obtained by deleting some $j'$ largest terms from the common numerator of $Z$ and $Z'$, where $j'-\frac{j}{2} <2$.
Similarly, the denominator of $L$ is obtained by deleting $j'$ smallest terms from the denominator of $Z'$.  Since there are more than $100j'$ terms in the denominator, $L$ is therefore at most $(Z')^{99/100}$. On the other hand, since $d(K)$ is even, the denominator of $L$ is also obtained by deleting the $j'$ smallest terms from the denominator of $Z$ and adding some even amount $a \geq -2$ to each remaining term. 
Since the denominator terms of $Z$ are all at least $d(W)$ and there are fewer than
$d(W)$ of them, shifting one term of the denominator multiplies the value of the product by at most 3.  Since the $j$ smallest terms of the denominator of $Z$ are 
all at most $d(W)+2j+3<\sqrt{3}d(W)$ and the $j$ largest  terms of the numerator  of $Z$ are 
all at least $d(W)+2j+3>\frac{d(W)}{\sqrt{3}}$, it follows that $L$ is at most $Z3^{j'+1}<2^{d(W)/200}Z$. 
\end{proof}

\begin{cor}
\label{kernelalltogether}
    Whp there is exactly  one component of $G$ which contains two or more cycles, and this component contains the kernel.   
\end{cor}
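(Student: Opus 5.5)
Our plan is to read off both halves of Corollary~\ref{kernelalltogether} from Lemma~\ref{kernelbound} and Lemma~\ref{d(K)lower bound}, via the structural decomposition of $G$ into its kernel, decorated paths, decorations and unicyclic components.

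\emph{Which components contain two or more cycles.} A component of $G$ with at least two cycles has a core of minimum degree $2$ that is not a single cycle. Hence that core contains a vertex of degree at least $3$, and so the component contains a vertex of the kernel $K$. Conversely, a component that is a tree, a cycle, or unicyclic contributes no vertex to $K$, since its core is empty or a single cycle. So it suffices to show that whp all vertices of $K$ lie in a single component of $G$. Components of $G$ that meet the core correspond to components of $K$, after subdividing kernel edges into paths and attaching decorations. So it is enough to show that $K$ is connected whp.

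\emph{Connectivity of $K$.} We work on the whp event of Lemma~\ref{kernelbound} and the event $d(K)\ge \gamma m^{\neq 2}$ of Lemma~\ref{d(K)lower bound}; the latter guarantees $d(K)=\omega(1)$, so that $\log^{1/3} d(K)$ is finite and positive. Suppose $K$ were disconnected. Take a union $W$ of components of $K$ with $d(W)\le d(K)/2$. For instance, take the component of smallest degree sum; it has $d(W)\le d(K)/2$ because there are at least two components. Then $\out(W)=0$. But Lemma~\ref{kernelbound} gives $\out(W)\ge d(W)/\log^{1/3} d(K)>0$, since $d(W)\ge 3|W|\ge 3$. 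This is a contradiction, so whp $K$ is connected, and whp it is nonempty because $d(K)\ge\gamma m^{\neq 2}>0$.

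\emph{Assembling.} The unique component $C$ of $G$ containing $K$ is then the only component containing a kernel vertex. It contains at least two cycles because $K$ is nonempty with minimum degree $3$. Every other component contains no kernel vertex and so has at most one cycle. This gives existence and uniqueness simultaneously.

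The only subtle point is the justification that Lemma~\ref{kernelbound} applies to every $W\subset K$ with $d(W)\le d(K)/2$, including unions of whole components, and not merely to sets connected in $K$. As stated, the lemma quantifies over all such $W$, so no extra work is needed. One should also double-check the bookkeeping that edges of $K$ correspond exactly to decorated paths between kernel vertices, so that $\out_K(W)=0$ is equivalent to the absence of $G$-paths from $W$ to $K\setminus W$ through the core.
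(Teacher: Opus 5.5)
Your proposal is correct and follows essentially the same route as the paper: the kernel is connected by Lemma~\ref{kernelbound}, it is nonempty by Lemma~\ref{d(K)lower bound}, and a component contains two or more cycles exactly when it meets the kernel. You also spell out the step the paper leaves implicit, namely that a component $W$ of smallest degree sum in a disconnected kernel has $d(W)\le d(K)/2$ and $\out(W)=0$, which contradicts Lemma~\ref{kernelbound}.
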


\begin{proof}
    \Cref{kernelbound} implies the kernel is connected and thus lies in one component whp.
    Lemma \ref{d(K)lower bound} implies the kernel is non-empty. 
    It is not hard to see that a  connected component contains two or more cycles if and only if it intersects the kernel. 
 \end{proof}

\subsection{Putting it all together}

We now translate our results about $J$ to results about $G$,
in order to prove Theorem \ref{thm:mainnew5}.  Recall that for $S \subset V(G)$, we use $F_S$ to denote the subgraph of the coloured multigraph $J$ whose vertices are those of $J$ contained in $S$ and whose coloured edges are those for which the entirety of the corresponding path is contained in $S$. Using our results from the previous section on subgraphs of $J$, we may obtain the following bound for subsets of $V(G)$.

\begin{prop}
\label{saulgoodman}
    Whp there  is a  $C$  such  that every $S \subset V(G)$ inducing a connected subgraph of $G$  with $d(S)> \frac{Cm \log m^{\neq 2}}{m^{\neq 2}}$ satisfies $d(S) \le \frac{4Cd(F_S)m}{m^{\neq 2}}$.
\end{prop}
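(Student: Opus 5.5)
The plan is to express $d(S)$ through the $J$-edges whose paths meet $S$, and then show that whp no connected family of $J$-edges has total path length much larger than its size times $m/m^{\neq 2}$.

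\textbf{Decomposition.} Let $S$ induce a connected subgraph of $G$. Suppose first that $S$ contains no vertex of $J$. Then $S$ lies in the interior of a single path, or in a cycle component. By Proposition~\ref{greenpathsprop} with $c=4\log m^{\neq 2}$ and a union bound over the at most $m^{\neq 2}$ green edges, whp $L_{\max}\le C_0\frac{m\log m^{\neq 2}}{m^{\neq 2}}$. The same bound holds for cycle components by Proposition~\ref{fewcycles}. So $d(S)\le 2L_{\max}+O(1)$, and choosing $C>4C_0$ makes the hypothesis $d(S)>\frac{Cm\log m^{\neq 2}}{m^{\neq 2}}$ impossible.

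Otherwise $V(F_S)\neq\emptyset$. Every vertex of $S$ outside $V(F_S)$ has degree $2$ and lies on a path whose $J$-edge is incident to $V(F_S)$. This holds because $S$ is connected, so any partial path meets $S$ in a segment containing one of its endpoints in $V(F_S)$. Let $E^+(S)$ be the set of $J$-edges incident to $V(F_S)$, and set $s:=|E^+(S)|\le d(F_S)$. Then
\[
d(S)\le d(F_S)+2\sum_{e\in E^+(S)}\mathrm{len}(e),
\]
so it suffices to show that whp every such edge set has total length at most $\max\{c\,s,\ \tfrac{C}{4}\log m^{\neq 2}\}\cdot\frac{m}{m^{\neq 2}}$. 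Combined with $d(S)>\frac{Cm\log m^{\neq 2}}{m^{\neq 2}}$, this bound forces the first term of the maximum to dominate. That gives $d(S)\le 4C\,d(F_S)\,m/m^{\neq 2}$ once $C\ge c$.

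\textbf{Union bound over connected families.} The sets $E^+(S)$ are determined by a connected subgraph of $J$ (take a spanning tree of $F_S$ together with its incident edges). By the encoding behind \eqref{treenumber}, there are at most $m^{\neq 2}\,2^{O(s)}$ candidate sets of size $s$ (choose a root, then a bit string of length $O(s)$). For a fixed set, red and yellow edges contribute length at most $2$ each. For the green ones I apply Proposition~\ref{ballsinbins} with $B=\max\{cs,\tfrac C4\log m^{\neq2}\}\,N/(g-1)$. The binomial $\mathrm{Bin}(g-1,B/N)$ then has mean at least $\max\{cs,\tfrac C4\log m^{\neq 2}\}\gg s$, so Chernoff gives probability at most $e^{-cs/8}(m^{\neq 2})^{-C/32}$. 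By Proposition~\ref{redandyellowpathsprop}, $N/(g-1)=O(m/m^{\neq 2})$ when $m^{\neq 2}\le m/100$. Multiplying by $m^{\neq 2}2^{O(s)}$ and summing over $s\ge1$ gives $o(1)$ for $c,C$ large.

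\textbf{Main obstacle.} The delicate point is the union bound. Proposition~\ref{ballsinbins} is stated for a fixed set of edges, whereas $E^+(S)$ is random and depends on $J$. I would handle this by conditioning on $J^-$ and the colouring, noting that lengths are independent of which tree is chosen. The exploration-string count must also only use connectivity, since degrees may be huge. The regime $m^{\neq 2}>m/100$ is easier, since then $m/m^{\neq 2}\le 100$. If $g$ is small there, total path length is trivially bounded by $n_2+m^{\neq 2}=O(m^{\neq 2})$. The $\tfrac C4\log m^{\neq 2}$ floor in $B$ is exactly what lets the $m^{\neq 2}$ root-choice factor be absorbed, and it is why the statement requires $d(S)>\frac{Cm\log m^{\neq 2}}{m^{\neq 2}}$.
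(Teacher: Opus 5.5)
\textbf{There is a genuine gap in the regime $m^{\neq 2}>m/100$.} The rest of your plan matches the paper's proof. You reduce to connected subgraphs of $J$ and the $J$-edges incident to them, and count candidates via the exploration strings (at most $m^{\neq 2}2^{O(d)}$). You condition on the coloured $J$ so that green lengths follow Proposition~\ref{ballsinbins}, and use Chernoff with a $\log m^{\neq 2}$ floor to absorb the root choice. For $m^{\neq 2}\le \delta m$ this is complete, because Proposition~\ref{redandyellowpathsprop} gives $g=\Omega(m^{\neq 2})$ whp and hence $N/(g-1)=O(m/m^{\neq 2})$. Your separate handling of $S$ with $V(F_S)=\emptyset$ is a point the paper passes over.

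In the regime $m^{\neq 2}>m/100$, neither Proposition~\ref{redandyellowpathsprop} nor Proposition~\ref{greenpathsprop} (which you also use for $L_{\max}$) applies. A priori nothing prevents $g$ from being small while $N$ is of order $m^{\neq 2}$, since $n_2$ can be up to $99m^{\neq 2}$. In that case the binomial in Proposition~\ref{ballsinbins} has mean $(g-1)B/N$, which need not be large for $B=\Theta(\max\{s,\log m^{\neq 2}\})$, so the Chernoff step fails. Your fallback, that total path length is $O(m^{\neq 2})$, is the wrong kind of bound: the statement needs the green edges incident to a \emph{small} $F_S$ to be short. For example, a single green edge of length $\sqrt{m^{\neq 2}}$ at a degree-$3$ vertex $v$ gives $S=\{v\}\cup(\text{interior of the path})$ with $d(S)\approx 2\sqrt{m^{\neq 2}}$ and $d(F_S)=3$. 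This violates the conclusion even though $m/m^{\neq 2}\le 100$.

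What is missing is a proof that such configurations are unlikely. The paper gets this by splitting on $g$ rather than on $m/m^{\neq 2}$. When $g\le m^{\neq 2}/100$, one of $r,y$ exceeds $m^{\neq 2}/3$. The paper then switches interior vertices of the long green paths incident to $F$ onto red edges (two at a time) or yellow edges (one at a time). There are a constant factor more such moves out of a bad configuration than back into it, which gives geometric decay in the excess length $\sigma$. You need this argument, or an equivalent one bounding path lengths directly given the uncoloured $J^-$, to close this case, including your $L_{\max}$ bound for sets avoiding $V(J)$.
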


\begin{proof}
     Given a subgraph $F$ of $J$, let $F^*$    be the union of  $V(F)$ and the interior of the paths of the edges 
     of $J$ incident to $V(F)$. Then any set $S$ is contained in $F^*_S$. 
     So, it is enough to show that the expected number of connected 
     subgraphs  $F$ in  $J$ for which $d(F^*)>\max\left\{\frac{4Cd(F)m}{m^{\neq 2}},  \frac{Cm \log m^{\neq 2}}{m^{\neq 2}}\right\}$ is $o(1)$.

    Applying \Cref{treenumber} the expected number of choices for a submatching of $M^*$ yielding  a  spanning tree of a fixed subgraph $F$  of $J$ with $d(V(F))=d$
   and $\ex(V(F))=\ex$  is less than $m^{\neq 2}{d \choose \ex}$. Summing over all choices of $\ex$, then, the expected  number of  choices of submatching of $M^*$  yielding a tree spanning a subgraph $F$ of   $J$ with $d(V(F))=d$ is less than $m^{\neq 2}2^d$. We show that,  conditioned on the existence of any  fixed such submatching and the subgraph $F$ it specifies, the probability that $d(F^*)> \max\left\{\frac{4Cd(F)m}{m^{\neq 2}},  \frac{Cm \log m^{\neq 2}}{m^{\neq 2}}\right\}$ is $o(\frac{1}{m^{\neq 2}d^22^d})$ thereby proving the proposition.
   
   We will actually prove this bound conditioned on a full choice of $M^*$ extending the  submatching. This allows us to determine the choice of the set of at most $d$ edges of $M^*$ which are incident to vertices of $F$. Of course, $d(F^*)$ is simply  the sum of $d(F)$ and twice the  length of all the paths corresponding to these edges. We then condition further on the set $Z$ of the at-most-$d$ edges incident to $F$ which are green.
   Since the red and yellow paths correspond to paths of total length at most $2d$, we need to show that under this conditioning, the probability of the event $E_Z :=$ \Big\{the sum of the length 
   of the paths corresponding  to the edges of $Z$ exceeds 
   $\max\left\{\frac{4Cdm}{m^{\neq 2}}-4d,  \frac{Cm \log m^{\neq 2}}{2m^{\neq 2}}\right\}$\Big\} is $o\left(\frac{1}{m^{\neq 2}d^22^d}\right)$.

   We bound  separately the probability of    $E_Z \cap \{g>\frac{m^{\neq 2}}{100}\}$   and $E_Z \cap \{g \le \frac{m^{\neq 2}}{100}\}$.
   
To bound $E_Z \cap \{g>\frac{m^{\neq 2}}{100}\}$ we further condition on  the set of  all green edges.   By Proposition \ref{ballsinbins}, for $N = n_2 - 2g - y$ and letting  
     $$X=\mathrm{Bin}\left(g-1,\frac{1}{N}\left(\frac{\max\{2Cd,\frac{C}{2}\log m^{\neq 2}\}m}{m^{\neq 2}}\right)\right)\ ,$$
     we have  $\p{E_Z} \leq \p{X<d}$. Since $N<m$,  whp we have 
     $$\E{X} > \max\left\{\frac{Cd}{100}, \frac{C \log m^{\neq 2}}{200}\right\}-4d>40d+40\log m^{\neq 2}.$$
     Furthermore, $\p{E_Z} \leq \p{X<\frac{\E{X}}{2}}$, which is at most $2^{-\e {[X]}/12}<2^{-2d}(m^{\neq 2})^{-2}$.

We consider next the possibility that $ g\le \frac{m^{\neq 2}}{100}$ and $r>\frac{m^{\neq 2}}{3}$. For $i \le 40d+40\log m^{\neq 2} <\frac{m^{\neq 2}}{100}$,
    let ${\mathcal F}_i$ be the family of graphs $G$ for which $ g \le \frac{m^{\neq 2}}{100}+i$ and $r>\frac{m^{\neq 2}}{3}-i$ and 
     the sum $\sigma$  of the length of the edges of $Z$ exceeds 
     $\frac{\max\{Cd,C\log m^{\neq 2}/2\}m}{m^{\neq 2}}-2i$. So
     $E_Z \cap \{g\le \frac{m^{\neq 2}}{100}\} \cap \{r>\frac{m^{\neq 2}}{3}\} \subseteq {\mathcal F}_0$. 

We can move from a graph in ${\mathcal F}_i$ to  a graph
      in ${\mathcal F}_{i+1}$ by taking any two vertices in green 
      edges of $S$ with at least four internal vertices and using them to 
      subdivide a red edge.  There are at least $(\frac{m^{\neq 2}}{3}-i)\frac12(\sigma-2i-4d)^2>\frac{m^{\neq 2}\sigma^2}{4}$  ways of doing this.
      To swap in the other direction, we use the two  internal vertices of some green edges to subdivide edges corresponding to $Z$. There are at most $(\frac{m^{\neq 2}}{100}+i)(\sigma+d+1)^2<\frac{m^{\neq 2}\sigma^2}{40}$
      ways of doing this. Thus, $|\cF_i| \le \frac{|\cF_{i+1}|}{10}$, 
      and 
      \[
      \p{E_Z \cap (g\le \frac{m^{\neq 2}}{100}) \cap (r>\frac{m^{\neq 2}}{3})} \le  \p{{\mathcal F}_0} \le 10^{-40d-40\log m^{\neq 2}}\, .
      \]

The remaining case is when $ g\le \frac{m^{\neq 2}}{100}$ and $r \le \frac{m^{\neq 2}}{3}$, so that $y \le \frac{m^{\neq 2}}{3}$. The argument is identical to the previous case except that in the switching, we move only one vertex from a green to a yellow edge.

\end{proof}

We now have all the ingredients to prove our main result for degree sequences without a center.

\begin{proof}[Proof of Theorem~\ref{thm:mainnew5}]

Combining  \Cref{decorationsize} and  \Cref{saulgoodman}, 
we obtain that whp the maximum size of a decoration, decorated path, or unicyclic component is $O(\frac{m\log m^{\neq 2}}{m^{\neq 2}})$.
Thus every component of $G$ disjoint from the kernel has size and hence diameter $O(\frac{m\log m^{\neq 2}}{m^{\neq 2}})$. Hence whp these components have  mixing time $O((\frac{m \log m^{\neq 2}}{m^{\neq 2}})^2)$.

Lemma \ref{kernelbound} implies that the kernel is connected, so it 
remains to  bound the  diameter
 and mixing time of the component  $K^*$ containing it   by considering the conductance of the connected sets  within it.  
We consider  each   subset $S$ of the vertices of $K^*$ forming a connected set with $d(S) \le d(K^*)/2$. 

Let $C_1$ be the constant given by \Cref{lem:sparse-conductance}, $C_2$ be the constant given by \Cref{saulgoodman}, and $C = C_1C_2$. If $d(S) \le \frac{4Cm \log m^{\neq 2}}{m^{\neq 2}}$
Proposition \ref{saulgoodman}  implies  that whp  every $S$ with  $\frac{4C m\log  m^{\neq 2}}{m^{\neq 2}} \le d(S) \le \frac{m}{\log \log m^{\neq 2}}$,  satisfies $C_1\log m^{\neq 2} <d(F_S)$.
By Proposition \ref{sumoflengthsprop2}, we have  that for any such $S$, $d(F_S)<\frac{m^{\neq 2}}{C_1}$. 
Hence   Lemma \ref{lem:sparse-conductance}
and Proposition \ref{saulgoodman} imply whp for all such $S$  we have $\cond(S)=\Omega(\frac{m^{\neq 2}}{m})$.

If $\frac{m}{\log \log m^{\neq 2}}<d(S) \le \frac{d(K^*)}{2}$ and   
$\out(S)=\out(V-S)>\frac{m^{\neq 2}}{\log^{6/7} m^{\neq 2}}$, then  $\cond(S) \ge \frac{m^{\neq 2}}{m\log^{4/5} m^{\neq 2}}$. 

We claim that whp, there is no  $S$ with  $\frac{m}{\log \log m^{\neq 2}}<d(S) \le \frac{m}{2}$  and $\out(S)=\out(V-S)<\frac{m^{\neq 2}}{\log^{6/7} m^{\neq 2}}$. 
To prove our claim we note that  by Lemmas  \ref{d(K)lower bound} and \ref{kernelbound}, whp for any such $S$,   either $d(K \cap S)<\frac{d(K)}{\log^{1/2} d(K)}$ or $  d(K \cap S) \le d(K)- \frac{d(K)}{\log^{1/2} d(K)}$.  
So, some $W \in \{S,K^*-S\}$ satisfies $d(W)> \frac{m}{\log \log m^{\neq 2}},\ d(K \cap W)<\frac{d(K)}{\log^{1/2} d(K)}$, and  $\out(W)
<\frac{m^{\neq 2}}{\log^{6/7} m^{\neq 2}}$.

For each decorated path or decoration  which contains an edge from $W$ to $V-W$  and either  contains more than $\frac{m\log^{6/7} m^{\neq 2}}{4m^{\neq 2}\log \log m^{\neq 2}}$ edges incident to $W$ or is incident to an edge which is also incident to $K \cap W$, we add the interior of the decorated path or the decoration to $W$. We still have  
$d(W)> \frac{m}{\log \log m^{\neq 2}}, \ \out(W)
<\frac{2m^{\neq 2}}{\log^{6/7} m^{\neq 2}}$, and   $d(K \cap W)<\frac{2m^{\neq 2}  }{\log^{1/2} m^{\neq 2}}$. 

For  each decorated path or decoration  which contains an edge from $W$ to $V-W$ but is incident to no edge which is also incident to $K \cap W$ and contains fewer than 
$ \frac{m\log^{6/7} m^{\neq 2}}{4m^{\neq 2} \log \log m^{\neq 2} }$ edges incident to $W$, we  delete from $W$ its intersection with the decorated path or decoration.   We still have $\out(W)<\frac{2m^{\neq 2}}{\log^{6/7} m^{\neq 2}},\ d(K \cap W)<\frac{2m^{\neq 2}}{\log^{1/2} m^{\neq 2}}$, and (because of our bound on $\out(W)$) $d(W)> \frac{m}{2\log \log m^{\neq 2}}$.

We note that   every decorated path and decoration  now either lies entirely in $W$ or entirely outside $W$. 
Furthermore, there are at most $d(K \cap W)+\out(W)<\frac{3m^{\neq 2}}{\log^{1/2} m^{\neq 2}}$ decorated paths and decorations  which  lie entirely within $W$. 
Applying Lemma \ref{decorationsize2} with $\ell=(\log m)^{1/4}$ we see that whp  for any choice of such a $W$, the number of edges of $J$ on  these paths is 
$o(\frac{m^{\neq 2}}{\log^{1/7} m^{\neq 2}})$. But, $d(W)> \frac{m}{2\log \log m^{\neq 2}}$ and so applying \Cref{saulgoodman}, our claim is proved.

Summing up, we have the following whp. 
\begin{enumerate}
    \item If $d(S) \le \frac{4Cm \log m^{\neq 2}}{m^{\neq 2}}$  and $S$ is not a component, then $\cond(S)>\frac{1}{d(S)}$.
    \item If $\frac{4C m\log  m^{\neq 2}}{m^{\neq 2}} \le d(S) \le \frac{m}{\log \log m^{\neq 2}}$, then $\cond(S)=\Omega(\frac{m^{\neq 2}}{m})$. 
     \item If $\frac{m}{\log \log m^{\neq 2}}<d(S) \le m$,  then  $\cond(S)>\frac{m^{\neq 2}}{m\log^{4/5} m^{\neq 2}}$. 
\end{enumerate}

Applying Observation \ref{conductanceboundnew} and Lemma \ref{conductancebound}, we see that the whp the diameter of every component is $O(\frac{m\log m^{\neq 2}}{m^{\neq 2}})$   and the mixing time of every component is   $O((\frac{m\log m^{\neq 2}}{m^{\neq 2}})^2)$.
\end{proof}

\section{Degree Sequences with a Center}\label{sec:withcenter}

It remains to prove Theorem \ref{thm:mainnew1}
for degree sequences with a $\mu$-center, which we do in this section.
As was the case for degree sequences without a $\mu$-center, applying Proposition
\ref{fewcycles} allows us to restrict our attention to non-cycle components.

We recall that degree sequences with a $\mu$-center satisfy $m^{\neq 2}\geq \mu^3 m$ and  $\sum_{i=n-d_n+1}^n d_i \ge \mu^2  m^{\neq 2} (\ge \mu^5 m)$. 
We note that this implies that $d=d_n \ge \mu^{2.5} \sqrt{m}$
and that $\frac{m}{m^{\neq 2}}<\mu^{-3}$. 
Let $S_{\ge i}$ be the set of vertices of degree at least $i$.
We use $S$ for the vertices of $S_{\ge\lambda \sqrt{m}}$
for some $\lambda$ sufficiently small in terms of $\mu$ and $\rho$.

Recall that $B_i(v)$ is the set of vertices joined to $v$ by a path 
with at most $i$ edges. We first bound the distance from any vertex to the vertex with maximum degree. 

\begin{lem}
\label{lastlem} 
For any sequence of degree sequences with a $\mu$-center and any function $g$ going to infinity with $m$, whp every vertex of $S$ is in  $B_{<g(m)}(n)$. 
\end{lem}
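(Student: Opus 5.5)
Throughout, $n$ denotes the vertex of maximum degree $d=d_n\ge \mu^{2.5}\sqrt{m}$, and $S=S_{\ge \lambda\sqrt m}$. Since $d(S)\le 2m$, we have $|S|\le 2\sqrt m/\lambda$.

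The plan is a bounded-depth expansion argument from $n$, carried out with switchings. We then take a union bound over the at most $2\sqrt m/\lambda$ vertices of $S$. The key input is the $\mu$-center condition. The $d$ largest degrees $d_{n-d+1},\dots,d_n$ sum to at least $\mu^2 m^{\neq 2}\ge \mu^5 m$. So there is a set $T$ of at most $d$ vertices carrying a constant fraction $\mu^5$ of all halfedges.

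We proceed in three steps, with $v\in S$ fixed.

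\emph{First step: $n$ reaches a constant fraction of all halfedges in two steps.} We show that whp the total degree of $N(n)\cup N(N(n))$ is at least $c\,m$ for some $c=c(\mu)>0$. Let $\mathcal F_i$ be the graphs in which $n$ has exactly $i$ neighbours in $T$. We switch an edge $nx$ with $x\notin T$ against an edge $ty$ with $t\in T$, creating $nt$ and $xy$. Since $d(T)\ge \mu^5 m$, this yields at least roughly $(d-i)\mu^5 m$ switchings from $\mathcal F_i$ to $\mathcal F_{i+1}$. In the other direction there are at most $(i+1)\cdot 2m$ switchings. By \eqref{switchingineq} the ratio $|\mathcal F_{i}|/|\mathcal F_{i+1}|$ is at most $1/2$ for $i\le \mu^5 d/8$. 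Hence whp $n$ is adjacent to a set $T'\subseteq T$ with $|T'|\ge \mu^5 d/8$. To get $d(T')$ large rather than just $|T'|$, we weight the switching by degree, choosing $t$ with probability proportional to $d_t$. This should give $d(T')\ge c'\mu^5 d\cdot (d(T)/|T|)$, which is of order $m$ up to constants. If that weighting fails, a fallback is to go one more level: the neighbours of the vertices in $T'$ carry $\Omega(m)$ degree by the same switching argument.

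\emph{Second step: $v$ is adjacent to this large set.} Let $U=B_{2}(n)$, or $B_3(n)$ if the fallback is needed; this is a set of bounded radius with $d(U)\ge cm$. Condition on the exposed structure around $n$. Let $\mathcal F_{yes}$ be the graphs in which $v$ has a neighbour whose halfedges are matched into $U$, and $\mathcal F_{no}$ the rest. Because $v$ has $\lambda\sqrt m$ halfedges and $U$ carries $cm$ halfedges, the probability that none of $v$'s edges lands at $U$ should be about $(1-c)^{\lambda\sqrt m}$. This gives distance at most $4$ with failure probability $e^{-\Omega(\sqrt m)}$, which survives the union bound over $S$.

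The main obstacle is making this independence heuristic rigorous with switchings while conditioning on the exposed neighbourhood of $n$. The standard switching inequality handles one edge at a time. I would therefore run an iterative exposure: reveal $v$'s halfedges one by one. At each step we switch the partner of the next halfedge into $U$, obtaining via \eqref{switchingineq} a conditional probability at least $c/4$ that it lands in $U$. This holds as long as fewer than $\lambda\sqrt m/2$ halfedges of $v$ are exposed and simplicity constraints block only $O(\sqrt m)$ choices, which is negligible relative to $cm$.

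Finally, a simpler alternative for robustness is worth noting. Since $g(m)\to\infty$, the statement only needs $\mathrm{dist}(v,n)=O(1)$ whp, and a union bound costs only $|S|=O(\sqrt m)$ factors. So any failure probability $o(m^{-1/2})$ per vertex suffices, and constants such as $4$ versus $6$ in the radius are immaterial.
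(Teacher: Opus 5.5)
Your second step does not follow from your first, and the per-vertex bound it asserts is false in general. Step one lower-bounds the total degree of $U=B_2(n)$ (or of $T'=N(n)\cap T$), but that total counts exactly the half-edges already spent building $U$. You must condition on the exposed structure around $n$: otherwise switching $v$'s edge $vx$ with an edge $uy$, $u\in U$, can delete the edge that put $u$ in $U$. After that conditioning, the only partners left for $v$'s half-edges are unexposed half-edges on the outer layer of $U$, and nothing in your argument shows there are $\Omega(m)$ of them.

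Concretely, let $n$ have degree $D=\lfloor m/2\rfloor$, let $v$ have degree $k=\lceil \lambda\sqrt m\rceil$, and let the remaining $L=2m-D-k$ vertices be leaves. This sequence is feasible, has $n_0=0$, $m^{\neq 2}=m$, a $\mu$-center and $R_{\cD}=D$, and $S=\{v,n\}$. Every realization is a double star, or two disjoint stars plus a perfect matching on the leftover leaves. A direct count gives $\p{vn\in E}/\p{vn\notin E}=Dk/(L-D-k+2)=\Theta(\lambda\sqrt m)$. So with probability $\Theta(1/(\lambda\sqrt m))$ the vertex $v$ is the centre of a star component, at infinite distance from $n$.

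Your step one holds in this example: $T$ contains about $D$ leaves, so $d(T')=\Theta(m)$. But every one of those half-edges belongs to a leaf already matched to $n$. The only switchings that help are the $Dk$ that create $vn$ itself, so there is no exponential decay to iterate. Hence neither $e^{-\Omega(\sqrt m)}$ nor the $o(m^{-1/2})$ of your last paragraph holds uniformly; the lemma survives here only because $|S|=2$.

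What is missing is a per-vertex estimate that improves as $|S|$ grows, since when $S$ is large its own vertices supply the short routes to $n$. The paper obtains this by splitting on $d_n$.

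\emph{Case $d_n\ge f(m)\sqrt m$.} A single switching gives $\p{vn\notin E}\le 4/(\lambda f(m))$, which is enough if $|S|$ is small. Otherwise whp $9|S|/10$ vertices of $S$ are adjacent to $n$. Switching $v$ onto a path through some $y\in N(n)\cap S$ gives at least $\lambda^2 m|S|/2$ switchings against $2m$ in reverse, so $\p{Bad_v}\le 16/(\lambda^3 f(m)|S|)$. This carries exactly the factor $1/|S|$ the union bound needs.

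\emph{Case $d_n\le f(m)\sqrt m$.} The $\mu$-center forces at least $\sqrt m/f(m)^2$ vertices of degree at least $\sqrt m/f(m)^2$. The paper grows the ball around $n$ through this set in stages ($B_3$, $B_9$, $B_{12}$, $B_{15}$, then $B_{f(m)^{12}}$). This staged argument yields distance at most $f(m)^{12}=g(m)$, not $O(1)$.
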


\begin{proof}
We can and do assume $g(m) \le \log \log m$. 
We set $f(m)=g(m)^{1/12}$, so we need to prove whp every vertex of $S$ is in  $B_{f(m)^{12}}(n)$. The outline of the proof is the following. If $d_n \ge \sqrt{m}f(m)$,
then a simple switching  argument shows that the probability a vertex of $S$ is not adjacent to $n$ is $o(1)$; this allows us to conclude that whp most of the vertices of $S$ are adjacent to $n$. It is then a simple matter to show  via a second switching argument that all of $S$ is within distance 4 of $n$. If $d_n \le f(m)\sqrt{m}$, then because the degree  sequence is centered there are many vertices of degree exceeding
$\sqrt{m}/f(m)^2$. We show that a significant proportion of them are within distance $9$ of $n$. We then use this clump of vertices to show almost all of the vertices of $S$ are within distance 15 of $n$. We can then complete the proof.

{\bf Case 1:} $d_n \ge \sqrt{m}f(m)$. 

In this case for any vertex $v$ of $S$, there are  more than $(\lambda f(m)-2)m$ switchings from any graph in which $vn \not \in E$  to graphs in which $vn \in E$, and at most $2m$ switchings from any graph in which  $vn  \in E$  to 
graphs in which $vn  \not \in E$.  So, $\p{vn \notin E} < \frac{4}{\lambda f(m)}$.
It follows that if $|S| \le \frac{f(m)}{\log \log f(m)}$, then with high probability all of $S$ is adjacent to $n$, and we are done. Else, we assume that $|S| \ge \frac{f(m)}{\log \log f(m)}$. The expected number of vertices of $S$ nonadjacent to $n$ 
is then at most $\frac{4|S|}{\lambda f(m)}$, so by Markov's inequality, whp  $\frac{9|S|}{10}$ vertices of $S$ are in $N(n)$. 

For each vertex $v$ of $S$, we let $Bad_v$ be the event  that $|N(n) \cap S| \ge \frac{9|S|}{10}$ and $v \not \in B_4(n)$. For every $H \in Bad_v$ there are at least $\frac{\lambda^2 m|S|}{2}$ switchings 
from $H$ to graphs not in $Bad_v$  where $n$ and $v$ are non-adjacent but share exactly one common neighbour, using an edge $uv$ and an edge $yx$ for some $y \in N(n) \cap S$ and $x \neq n$. For every graph $H' \not \in Bad_v$ in which $n$ and $v$ have a unique common neighbour, there are at most $2m$ switchings using the edge from $v$ to the common neighbour. 
So, since $\p{vn \not \in E} \leq \frac{4}{\lambda f(m)}$, we then have $\p{Bad_v} \le \frac{16}{\lambda^3 f(m) |S|}$.
Thus, whp no $Bad_v$ occurs and whp $S \subseteq B_4(n)$.

{\bf Case 2:} $d_n \le \sqrt{m}f(m)$. 

Since the degree sequence is centered, we know that $\sum_{i : d_i > \frac{\mu^5 m}{2d_n}} d_i \geq \frac{\mu^5 m}{2}$, and hence there must be at least $\frac{\mu^5 m}{2d_n}>\frac{\sqrt{m}}{f(m)^2}$
vertices of degree at least $\frac{\mu^5 m}{2d_n}$. We let $S'$ be the set of vertices of degree  at least  $\frac{\mu^5 m}{2d_n}>\frac{\sqrt{m}}{f(m)^2} $.
Then $\frac{\sqrt{m}}{f(m)^2} \le |S'| \le 2\sqrt{m}f(m)^2$ and $d(S')>\frac{\mu^5 m}{2}$. 
It is enough to show that every vertex of $S'$ is in $B_{f(m)^{12}}(n)$.

\begin{claim}
\label{lastclaim}
  Whp either $|B_3(n)| \ge  \frac{m^{1/2}}{(\log m)^{10}}$  or for some $v \in S'$,  $|B_3(v)  \cap S'| \ge  \frac{m^{1/2}}{(\log m)^{10}}$.    
\end{claim} 

\begin{claimproof}
    We let ${\mathcal F}_{i,j}$ be the family of graphs $H$ that satisfy the following properties: 
\begin{enumerate}
    \item $n$ is adjacent to exactly $i+j$ 
    vertices of $S'$,
    \item no $v \in V-\{n\}$ is adjacent to more than $\frac{m^{1/2}}{(\log m)^{10}}$  vertices of $S'-\{n\}$, 
    \item  every  vertex  $v$ of
    $S'-\{n\}$  is at distance at most 3 in $H-\{n\}$ to at most  $(2j+1)\frac{m^{1/2}}{(\log m)^{10}}$  
vertices of $S'$, and
    \item $|B_3(n) \cap S'| \le \frac{(j^2+2j+1)m^{1/2}}{(\log m)^{10}}$. 
\end{enumerate}      
Note that it is enough to prove that $$\p{\bigcup_{i=0}^{\lfloor \frac{m^{1/2}}{(\log m)^{10}}  \rfloor} {\mathcal F}_{i,0}}=o(1)\ .$$
We do this by showing that for each such $i$, we must have
$\p{{\mathcal F}_{i,0}}<\frac{\p{{\mathcal F}_{i,\lceil \log m \rceil}}}{m}
<\frac{1}{m}$. 

There are at least $\frac{\mu^5 m}{2}$ edges incident to $S'$.  For any graph $H$ in ${\mathcal F}_{i,j}$ 
for $i  \le  \frac{m^{1/2}}{(\log m)^{10}}$ and $j \le \lceil \log m \rceil$, at most 
$|B_3(n) \cap S'| d_n <\frac{\mu^5 m}{4}$ of these are incident to vertices in 
$B_3(n) \cap S'$. Thus, there are at least $\frac {\mu^5 m^{3/2}}{f(m)^3}$ switchings using edges $nu$ and $xy$ where 
$u,x \not \in S'$ and $y \in S'-B_3(n)$. 

We verify that the result of such a switching is in the family $\cF_{i,j+1}$. Regarding (1), any such switching increases $|N(n) \cap S'|$ to $i+j+1$. 
Regarding (2), for any $v$ in $V-\{n\}$ the  number of neighbours of $v$ in $S'-\{n\}$ does not increase. Regarding (3), for any $v \in S'-\{n\}$ (including $v = y$), the number of vertices  of $S'$ of distance at most three from $v$ in $H-\{n\}$ increases by at most $|N(u)\cap (S'-\{n\})|+|N(x) \cap (S'-\{n\})|\le 2 \frac{m^{1/2}}{(\log m)^{10}}$ to at most $(2j+3) \frac{m^{1/2}}{(\log m)^{10}}$. 
Regarding (4), the new vertices in  $B_3(n) \cap S'$  are either vertices that were in $B_3(y) \cap S'$ or are neighbours of $u$ or $x$. So, $|B_3(n) \cap S'|$  increases by at most $(2j+3) \frac{m^{1/2}}{(\log m)^{10}}$ to at most $\frac{((j+1)^2+2(j+1)+1)m^{1/2}}{(\log m)^{10}}$. 
The result of such a switching is thus a graph in $\cF_{i,j+1}$. 

In the other direction, there are at most $2(i+j+1)m$ switches from a graph in ${\mathcal F}_{i,j+1}$
to  a graph in ${\mathcal F}_{i,j}$, So,  $|{\mathcal F}_{i,j}| \le \frac{|{\mathcal F}_{i,j+1}|}{\log m}$
and $\p{{\mathcal F}_{i,0}}<\frac{\p{{\mathcal F}_{i,\lceil \log m \rceil}}}{(\log m)^{\log m}}
<\frac{1}{m}$, proving the claim.
\end{claimproof}

Recall that our goal is to show that whp, the ``bad'' cases where there is a vertex in $S'$ far from $n$ do not occur. We now do this by defining a sequence of bad events which allow us to iteratively refine our understanding of $G$, first using $B_9(n)$, then $B_{12}(n)$, and finally $B_{15}(n)$. At each step, we use the switching inequality to show that the bad cases of the current step contribute $o(1)$ in probability.

First, let $Bad$ be those graphs for which  $|B_9(n)|<\frac{\sqrt{m}}{(\log m)^{20}}$ but there is a $v \in S'$ with $|B_3(v) \cap S'| \ge \frac{\sqrt{m}}{(\log m)^{10}}$.
Given $H \in Bad$, let $v$ be such a vertex,
so $v \not \in B_6(n)$. We let
$T$ be a tree  of height 3  rooted at $v$ which contains exactly $\lceil \frac{\sqrt{m}}{(\log m)^{10}} \rceil$  vertices of $B_3(v) \cap S'$ and whose leaves are in $B_3(v) \cap S'$. Then $|T| \le \lceil \frac{3\sqrt{m}}{(\log m)^{10}}\rceil$. We consider switchings from $H$ to graphs that satisfy $|B_9(n)| \geq \frac{\sqrt{m}}{(\log m)^{20}}$ and thus are not in $Bad$.
There are at least $\frac{m^{3/2}}{(\log m)^{12}}$ switchings from $H$ to a graph outside $Bad$  using an edge $xy$ not in $T$ with $ y \in T \cap S'$  and an edge $nz$ for some $z \not \in S'$.
In the other direction, since $v$ has at most $\frac{\sqrt{m}}{(\log m)^{20}}$ neighbours in $S'$ in any graph in $Bad$, for any graph $H'$ not in  $Bad$ there are at most $2m(\frac{\sqrt{m}}{(\log m)^{20}}+1)$
switchings from $H'$ to $Bad$  using an edge from $n$ to a vertex  of $S'$. By applying \eqref{switchingineq}, we can see that whp $G(\cD) \not \in Bad$.
Combined with Claim \ref{lastclaim} this tells us that whp $|B_9(n)  \cap S'|\ge \frac{\sqrt{m}}{(\log m)^{20}}$.  

We next consider the event  $Bad'_v$  that $|B_9(n)| \ge \frac{\sqrt{m}}{(\log m)^{20}}$  and $v \in S'-B_{12}(n)$. For each $H \in Bad'_v$, we let
$T$ be a tree  of height 10  rooted at $n$ which contains exactly $\lceil \frac{\sqrt{m}}{(\log m)^{20}}\rceil$ vertices of $B_9(n) \cap S'$ and whose leaves are in $B_9(n) \cap S'$. Then $|T| \le \lceil \frac{3\sqrt{m}}{(\log m)^{20}} \rceil$. We can and do ensure that for every $w \in T$, the path from $w$ to $n$ in $T$ is a shortest path from $w$ to $n$.

There are at least $\frac{m^{3/2}}{(\log m)^{22}}$ switchings from $H$ to a graph outside $Bad'_v$  using an edge $vz$ and an edge not in $T$---say $xy$---such that either $x$ or $y$ is in $T \cap S'$ and the ordering $xy$ is such that the  length of the shortest
path in $H$  from $n$ to $y$   is no longer  than the length of the shortest 
path  in $H$ from $n$ to $x$.
Since $\mathrm{dist}(z, n) \geq 11$ in $H$,
and neither $x$ nor $z$ are neighbors of  $v$ after the switching, it follows that $vy$ is the first edge of every  shortest path from $v$ to  $n$ in the new graph.
For any graph, there are at most $2m$ switchings that use the edge which is the first edge of every shortest path from $v$ to $n$. 
So $\p{Bad'_v}\le \frac{(\log m)^{22}}{\sqrt{m}}$.

Thus, $\p{\{|B_9(n)| \ge \frac{\sqrt{m}}{(\log m)^{20}}\} \cap \{|S'-B_{12}(n)|> \frac{|S'| (\log m)^{23}}{\sqrt{m}}\}}=o(1)$. Since 
$|B_9(n)| \ge \frac{\sqrt{m}}{(\log m)^{20}}$  whp, it follows that whp
$|B_9(n)| \ge \frac{\sqrt{m}}{(\log m)^{20}}$ and 
$|S'-B_{12}(n)| \le  \frac{(\log m)^{23}|S'|}{\sqrt{m}}< (\log m)^{24}$.

We next consider the event  $Bad^*_v$  that  
$|S'-B_{12}(n)| \le  (\log m)^{24}$  and $v \in S'-B_{15}(n)$.  We can mimic the argument above, using a tree  $T'$ containing $B_{12}(n) \cap S'$ 
in the place of $B_{9}(n) \cap S'$. Because the degree sequence is centered, there are at least $\frac{\mu^5 m}{4}$ edges 
incident to vertices of $T'$   which we can use to switch and hence at least $\frac{\mu^5 m^{3/2}}{4f(m)^2}$
switchings from a graph in $Bad^*_v$.  It follows that  $\p{Bad^*_v}\le \frac{4f(m)^2}{\mu^5 \sqrt{m}}$
and whp $|S'-B_{15}(n)| \le f(m)^5$.

Lastly, we consider the event $Bad^+_v$ that $|S'-B_{15}(n)| \le f(m)^5$ and $v \not \in B_{f(m)^{12}}(n)$.
Let $P$ be a shortest path from $v$ to $n$. For any  $H \in  Bad^+_v$,  we can switch any edge $xy$ of $P$ such that $x$ is closer to $v$
than $y$ and such that $\frac{f(m)^{11}}{2} \leq \mathrm{dist}(x,v) \leq f(m)^{11}$  with at least $\frac{\mu^5  m}{4}$
edges incident to vertices of $T'$ such that the new edge incident to $x$ is  one of the first 16 edges on every shortest $n-v$ path;  such a path has length  between $\frac{f(m)^{11}}{2}$ and $f(m)^{11}+16$, so $v \not \in B_{15}(n)$. For any graph, there are at most $16m$
switchings which use an edge that is one of the first 16 edges in every shortest path from $n$ to $v$.
Thus, 
$$\p{Bad^+_v} \le \frac{128 \p{\{|S'-B_{15}(n)| \le f(m)^5\} \cap \{v \not \in B_{20}(n)\}}}{\mu^5 f(m)^{11}}\ .$$

For any graph satisfying  $|S'-B_{15}(n)| \le f(m)^5$ and $v \not \in B_{20}(n)$, there are $\frac{\mu^5 m^{3/2}}{4f(m)^2}$ switchings to a graph not in this set 
using an edge incident to  $T'$ and an edge incident to $v$ so that the new edge from $v$ is the first edge on every shortest path 
from $v$ to $n$ in the new graph. Thus, $\p{\{|S'-B_{15}(n)| \le f(m)^5\} \cap \{v \not \in B_{20}(n)\}} \le \frac{4f(m)^2}{\mu^5 \sqrt{m}}$ and so  $\p{Bad^+_v} \le \frac{512 f(m)^2}{\mu^{10} \sqrt{m}f(m)^{11}}$.

As $|S'| \le \sqrt{m}f(m)^2$ it follows that with high probability no $Bad^+_v$ occurs. 
Since whp, $|S'-B_{15}(n)| \le f(m)^5$, this completes the proof. 
\end{proof}

To prove Theorem~\ref{thm:mainnew1}, we consider $\mu$-centered degree sequences in two separate cases: those whose high-degree vertices contribute a large fraction of the total number of edges and those which do not. 
We first show the following for the former case.

\begin{prop}
\label{dn-3large}
    Suppose we have a sequence of degree sequences with a $\mu$-center such that $d(S) > \rho\mu^{10}m$. If $d_{n-3}  \ge \frac{m}{D \log m}$, then whp every vertex of  
$S_{\ge \frac{n^{1/3}}{\log m}}-S$ is adjacent to a vertex of $S$.  
\end{prop}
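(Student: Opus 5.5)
Since $d_{n-3}\ge \frac{m}{D\log m}$, the vertices $n-3,n-2,n-1,n$ all have degree at least $\frac{m}{D\log m}$, and for large $m$ this exceeds $\lambda\sqrt m$. So these four vertices lie in $S$, and it suffices to show that whp every $v\in S_{\ge n^{1/3}/\log m}-S$ is adjacent to at least one of them. We will bound, for a fixed such $v$, the probability that $v$ has no neighbour among $T=\{n-3,\dots,n\}$, and then take a union bound over all such $v$. There are at most $\frac{2m\log m}{n^{1/3}}$ such vertices, because each has degree at least $n^{1/3}/\log m$. So the per-vertex probability needs to be $o\big(\frac{n^{1/3}}{m\log m}\big)$. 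Equivalently, $v$ should fail with probability at most roughly $m^{-1}$ times a small factor.

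\textbf{Switching step.} The natural approach is the layered switching argument used earlier, for example in the proof of Lemma~\ref{highdegreeinthekernel}. Let $\cF_i$ be the set of graphs in which $v$ has exactly $i$ neighbours in $T$. We compare $\cF_0$ with $\cF_1$.
\begin{itemize}
\item \emph{From $\cF_0$ to $\cF_1$.} Pick an edge $vx$ with $x\notin T$ and an edge $ty$ with $t\in T$, and switch them to create $vt$ and $xy$. Since $v\notin S$, we have $d(v)<\lambda\sqrt m$. Choices of $y$ that are neighbours of $x$, or equal to $v$, must be excluded. This leaves at least about $d(v)\cdot\frac12\sum_{t\in T}d(t)\ge d(v)\cdot\frac{m}{D\log m}$ switchings.
\item \emph{From $\cF_1$ to $\cF_0$.} A switching must use the unique edge $vt$ together with some other ordered edge, so there are at most $2m$ switchings.
\end{itemize}
By \eqref{switchingineq} this gives
\[
\p{\cF_0}\le \frac{2D\log m}{d(v)}\,\p{\cF_1}\le \frac{2D(\log m)^2}{n^{1/3}}.
\]

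\textbf{The main obstacle.} This single-step bound is far from enough for the union bound. To amplify it, the plan is to exploit the four vertices of $T$. I would iterate over $j$, working with graphs in which $v$ has exactly $j$ neighbours in $T$, and with switchings that add an edge from $v$ to a distinct vertex of $T$. This could give $\p{\cF_0}\lesssim \big(\frac{D(\log m)^2}{n^{1/3}}\big)^{4}$, which is $o\big(\frac{n^{1/3}}{m\log m}\big)$ as long as $m\le n^{1+o(1)}$.

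The assumption that the sequence has a $\mu$-center does not by itself bound $m$ in terms of $n$. It does give $m^{\neq 2}\ge\mu^3m$, and on its own this is not enough. I would therefore also bring in $d(S)>\rho\mu^{10}m$, and treat the case $m\gg n^{4/3}$ separately. In that case the vertex degrees are large compared with $n^{1/3}$, and I would bound the count of vertices in $S_{\ge n^{1/3}/\log m}-S$ directly rather than through the crude estimate $2m\log m/n^{1/3}$. For instance, it is at most $n$, and the per-vertex bound can be sharpened by using $d(v)$ itself: the failure probability is at most $(D\log m/d(v))^{4}$, and summing this over $v$ is controlled by $\sum_v d(v)^{-4}\cdot(D\log m)^4$.

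Verifying that this sum is $o(1)$ using $n^{1/3}/\log m$ as the degree floor is the step I expect to be the crux. If four vertices do not suffice, I would increase the power by also using the vertices of $S$ of degree at least $\frac{m}{D\log m}$ beyond the top four, while checking that the switchings remain valid, i.e.\ that they do not create multiple edges. This validity check is where the condition $v\notin S$ is needed.
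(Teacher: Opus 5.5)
Your approach is the paper's. The paper lets $E_{v,i}$ be the event that $v$ has exactly $i$ neighbours in $T=\{n-3,\dots,n\}$ and none in $S\setminus T$. It shows there are more than $\frac{d(v)m}{4D\log m}$ switchings from $E_{v,i}$ to $E_{v,i+1}$ and at most $2(i+1)m$ back, and chains the four steps to get $\p{E_{v,0}}=O\big((\log m)^4/d(v)^4\big)$.

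The step you call the crux closes in one line, and your worry about $m\gg n^{4/3}$ is unfounded. There are at most $n$ candidate vertices $v$. Since $d(v)\ge n^{1/3}/\log m$, each fails with probability $O\big(D^4(\log m)^8/n^{4/3}\big)$. The union bound therefore gives $O\big(D^4(\log m)^8\,n^{-1/3}\big)=o(1)$, because the graph is simple, so $m\le n^2$ and $\log m\le 2\log n$. You need no case split on $m$ versus $n$, no use of $d(S)>\rho\mu^{10}m$, and no high-degree vertices beyond the top four. The count $2m\log m/n^{1/3}$ was simply the wrong quantity to union-bound over.

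One detail in your switching count needs tightening. You must show that each $x\in N(v)\setminus T$ leaves many valid choices of $y\in N(t)$, i.e.\ that $|N(x)\cap N(t)|$ is small. Your justification via $d(v)<\lambda\sqrt m$ is beside the point; what matters is $d(x)$.
\begin{itemize}
\item The paper builds ``no neighbour in $S\setminus T$'' into its events. This is harmless, since $E_{v,0}$ is exactly the bad event and the switch $vx,ty\to vt,xy$ preserves the condition. It then follows that every such $x$ has $d(x)<\lambda\sqrt m<\frac12 d(t)$.
\item Without that conditioning, note instead that at most $4D\log m=o(d(v))$ vertices have degree at least $\frac{m}{2D\log m}$. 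So all but $o(d(v))$ of $v$'s neighbours $x$ still leave at least $d(t)/2-1$ valid choices of $y$.
\end{itemize}
The hypothesis $v\notin S$ plays no role in this count.
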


\begin{proof}
  For every $v \in S_{\ge\frac{n^{1/3}}{\log m}}-S$ and $i \le 4$, let $E_{v,i}$ be the event that $v$ is adjacent to exactly  $i$ elements of $\{n-3,n-2,n-1,n\}$ and none of $S-\{n-3,n-2,n-1,n\}$.
For any $H \in E_{v,i}$ with $i \le 3$, every neighbour of $v$ outside 
$\{n-3,n-2,n-1,n\}$ is adjacent to less than half the neighbours of  all of 
$\{n-3,n-2,n-1,n\}$.   So, for $i \le 3$ there are more than $\frac{d(v)m}{4D \log m}$
choices for switchings from $E_{v,i}$ to $E_{v,i+1}$.
On the other hand there are at most $2(i+1)m$ switchings in the other direction.
So, $\p{E_{v,i}}=O(\frac{\log m }{d(v)}\p{E_{v,i+1})}$ and 
$\p{E_{v,0}}=O(\frac{(\log m)^4}{d(v)^4})=o(\frac{(\log m)^8}{n^{4/3}})=o(n^{-1})$. Summing over all $v$ finishes the proof.  
\end{proof}

\begin{lem}
\label{notlastlem} 
    Suppose we have a sequence of degree sequences with a $\mu$-center such that $d(S) > \rho \mu^{10}m$. 
Whp, for any $h$ going to infinity with $n$ and sufficiently large $D$  we have that 
\begin{enumerate} [(I)]
    \item if  $d(S-S_{\ge \frac{m}{ D \log m}}) >\frac{\rho \mu^{10} m}{2}$, then every vertex of $S_{\ge D\log m}$ is within distance 6 of $S$, and
    \item every vertex of $S_{ \ge n^{1/3}h(n)}$ is within distance 6 of $S$. 
\end{enumerate}
\end{lem}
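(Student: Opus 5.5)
We prove both parts with the same switching template as Proposition~\ref{dn-3large} and Lemma~\ref{lastlem}. Fix a vertex $v$ of large degree. Define $E_{v,i}$ to be the family of graphs in which exactly $i$ neighbours of $v$ are at distance at most $5$ from $S$ and $v$ is at distance more than $6$ from $S$, for $i$ up to some cap. We aim to show $\p{E_{v,i}} \le q\,\p{E_{v,i+1}}$ with $q$ small enough that, after iterating a bounded or logarithmic number of times, $\p{E_{v,0}}=o(n^{-1})$. A union bound over the at most $n$ vertices $v$ then finishes.

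\emph{Forward switchings.} Starting from $H\in E_{v,i}$, switch an edge $vx$, with $x$ far from $S$, with an edge $yz$, where $y$ is a neighbour of a vertex of $S$ and $z$ is chosen so that the new edge $vy$ puts a neighbour of $v$ at distance $1\le 5$ from $S$. The other new edge $xz$ must not shorten any path so as to create a spurious count; we handle this by requiring $z\notin S$ and $y\notin N(S)$-collisions, as in Case~1 of Lemma~\ref{lastlem}. The number of available edges $yz$ is essentially $d(S)$ minus the edges inside $B_1$ of the old configuration. For (I) we use only the mid-degree part $S-S_{\ge \frac{m}{D\log m}}$, whose total degree exceeds $\frac{\rho\mu^{10}m}{2}$. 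Since each of those vertices has degree less than $\frac{m}{D\log m}$, deleting the few vertices touched by $v$'s current $B_6$-structure loses only a negligible proportion. This gives at least $(d(v)-i)\cdot\frac{\rho\mu^{10}m}{4}$ forward switchings.

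\emph{Backward switchings.} A reverse switching must use one of the $i+1$ edges from $v$ to the good neighbours, together with an arbitrary edge, giving at most $2(i+1)m$ reverse switchings. By \eqref{switchingineq},
\[
\p{E_{v,i}}\le \frac{8(i+1)}{\rho\mu^{10}(d(v)-i)}\p{E_{v,i+1}}.
\]
Iterating for $i<k$ with $k=3$ or $4$ gives $\p{E_{v,0}}\le O\big((\rho\mu^{10}d(v))^{-k}\big)\cdot O(1)$.

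\emph{Part (I).} Here $d(v)\ge D\log m$, so the iteration must be run more times, about $\log n$ steps, using that the ratio $\frac{8(i+1)}{\rho\mu^{10}(d(v)-i)}\le \frac12$ for $i\le \frac{d(v)}{2}$ once $D$ is large in terms of $\rho,\mu$. This yields $\p{E_{v,0}}\le 2^{-\Omega(D\log m)}=o(n^{-1})$, using $n\le 2m$.

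\emph{Part (II).} Here $d(v)\ge n^{1/3}h(n)$ and we do not have the mid-degree hypothesis. We split into two cases.
\begin{itemize}
\item If the hypothesis of (I) holds, part (I) already covers $v$.
\item Otherwise $d(S_{\ge \frac{m}{D\log m}})>\frac{\rho\mu^{10}m}{2}$, and there are at most $O(D\log m)$ such huge vertices.
\begin{itemize}
\item If $d_{n-3}\ge\frac{m}{D\log m}$, Proposition~\ref{dn-3large} gives adjacency to $S$ directly, since $n^{1/3}h(n)\ge\frac{n^{1/3}}{\log m}$.
\item If not, at most three huge vertices carry degree at least $\frac{\rho\mu^{10}m}{2}$ in total, so one of them, $u$, has $d(u)=\Omega(m)$. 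A direct two-step switching toward $u$ applies, as in Case~1 of Lemma~\ref{lastlem}: $\p{vu\notin E}=O(1/d(v))$ is too weak, so we instead count $k=4$ rounds on neighbours of $v$ reaching $N(u)$. This gives $O(d(v)^{-4})=o(n^{-4/3})$, which is summable over $v$.
\end{itemize}
\end{itemize}

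\emph{Main obstacle.} The hard part is verifying that the forward switchings genuinely land in $E_{v,i+1}$ and not in some other family. The new edge $xz$ could bring $v$ itself within distance $6$ of $S$, which only helps, or destroy an existing short path from another good neighbour, which hurts. To prevent the second case we restrict $x$ to neighbours of $v$ that are not good and $z$ to vertices outside a bounded-radius neighbourhood of $v$. The cost is to show that this neighbourhood has total degree $o(m)$, which requires a cap on degrees in $B_5(v)$ analogous to condition (2) in Claim~\ref{lastclaim}.
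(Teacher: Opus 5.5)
\paragraph{Part (I): the switching families are inconsistent.}
If some neighbour of $v$ is within distance $5$ of $S$, then $v$ is within distance $6$. So your family $E_{v,i}$ is empty for every $i\ge 1$, and the chain $\p{E_{v,i}}\le q\,\p{E_{v,i+1}}$ has nothing to switch into. Independently, your forward switching creates $vy$ with $y\in N(S)$, which puts $v$ at distance $2$ from $S$, so it could never land in $E_{v,i+1}$ as you defined it.

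Suppose you drop the distance condition, so that $E_{v,0}=\{\mathrm{dist}(v,S)>6\}$. You then face exactly the obstacle you flag and leave open. Deleting $yz$ can destroy another neighbour's short path to $S$. Excluding a ball $B_6(v)$ is cheap only if that ball has small total degree, and nothing in (I) prevents a vertex of degree $\Theta(m)$ from lying near $v$. Moreover, the number of edges $yz$ with $y\in N(S)$ and $z\notin S$ is not ``essentially $d(S)$'': if most neighbours of $S$ are leaves, there are almost none.

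The missing idea is to aim for direct adjacency, as the paper does. Let the families count the neighbours of $v$ in the mid-degree part $S-S_{\ge \frac{m}{D\log m}}$. Switch $vw$, with $w\in N(v)\setminus S$, against $zs$, with $s$ in that part and $z\in N(s)$, creating $vs$ and $zw$ (with the usual exclusions to keep the graph simple).
\begin{itemize}
\item Deleting $zs$ cannot remove any of $v$'s neighbours in $S$, so no short path is destroyed.
\item The mid-degree hypothesis is what makes excluding $s\in N(v)$ cost only $i\cdot\frac{m}{D\log m}$ of the $\frac{\rho\mu^{10}m}{2}$ available edges.
\end{itemize}
Also, your ratio $\frac{8(i+1)}{\rho\mu^{10}(d(v)-i)}$ is at most $\frac12$ only for $i\le c\rho\mu^{10}d(v)$, not for $i\le d(v)/2$. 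At $i=d(v)/2$ it exceeds $\frac{8}{\rho\mu^{10}}>1$ whatever $D$ is. Iterating only up to $c\rho\mu^{10}D\log m$ still gives $o(1/n)$ once $D$ is large in terms of $\rho,\mu$.

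\paragraph{Part (II): the ``$k=4$ rounds'' are not justified.}
In the single-hub subcase you never define these rounds, and the natural chain stalls after one step. One round, making $v$ adjacent to $u$, gives $\p{vu\notin E}=O(1/d(v))$, and this is essentially sharp in a simple graph. Further rounds that count common neighbours of $v$ and $u$ need $\Omega(m)$ edges at non-leaf vertices of $N(u)$, or many non-leaf neighbours of $v$. Neither is guaranteed.

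Here $m=O(n)$, so $a=|S_{\ge n^{1/3}h(n)}-S|$ can be of order $n^{2/3}/h(n)$. A bound of $O(1/d(v))$ per vertex therefore does not survive the union bound.

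The paper closes this with a case split on $a$:
\begin{itemize}
\item If $a<n^{1/3}\sqrt{h(n)}$, the union bound already works.
\item Otherwise Markov's inequality gives that whp at least $a/2$ of these vertices are adjacent to $n$. They supply $\Omega(an^{2/3}h(n)^2)$ switchings that give $v$ a unique common neighbour with $n$, against $O(n)$ reverse switchings. Hence the probability of this event together with $\mathrm{dist}(v,n)>4$ is $O\bigl(\frac{n^{1/3}}{a\,h(n)^2}\bigr)\p{vn\notin E}$, which sums to $o(1)$ over the $a$ vertices.
\end{itemize}
The supply of non-leaf neighbours of the hub that your later rounds would need is exactly what this Markov step provides, and it is available only when $a$ is large. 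That is why the case split is needed.
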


\begin{proof}

Suppose first that for some sufficiently large $D$, we have $d(S-S_{\ge  \frac{m}{ D \log m}}) >\frac{\rho \mu^{10} m}{2}$. 
For any $v \in S_{\ge D\log m}-S$ and $i \le  \frac{\rho\mu^{10} D \log m}{100} \le \frac{\rho \mu^{10}d(v)}{100}$, let ${\mathcal F}_i$ 
be the family of graphs where $v$ is adjacent to  at most 
$2i$ vertices which are either  in $S$ or have more than $2i$ 
neighbours which are adjacent to some vertex of  $S$.  Note that $\cF_0$ contains the event that $\mathrm{dist}(v,S) \geq 4$.

For any  graph in  ${\mathcal F}_i$, the number of switchings using edges $vw$ and $zs$ such that
\begin{itemize}
 \item $w \in N(v) - S$, 
 \item $w$ has at most 
$2i$ neighbours in common with vertices of $S$,
\item $s \in S-S_{\ge \frac{m}{D \log m}}-N(v)$, and 
\item $z \in N(s)-N(w)$
\end{itemize}
is at least $$(d(v)-2i)\left( \left(\frac{\rho\mu^{10}m}{2}-\frac{2im}{D \log m}\right) \frac{\lambda \sqrt{m}-2i}{\lambda \sqrt{m}}\right) 
\ge \frac{d(v)\rho \mu^{10}m}{5}.$$ 
Each such switch yields a graph in ${\mathcal F}_{i+1}$.
There are at most $(2i+2)m$ switches  from a graph in ${\mathcal F}_{i+1}$ using an edge from $v$ to $S$. So, we obtain 
$|{\mathcal F}_i| \le \frac{|{\mathcal F}_{i+1}|}{10}$.

Thus, whp, for sufficiently large $D$, $\p{{\mathcal F}_0}=o(\frac{1}{m})$ and hence whp every vertex of $S_{\ge D \log m}$ is within distance three  of a vertex of $S$, proving statement (I).

To prove (II) holds, we can restrict our attention to the 
case that $d(S-S_{\ge \frac{m}{ D \log m}})  \le \frac{\rho \mu^{10} m}{2}$.
Applying Proposition \ref{dn-3large}, we see that (II) holds whp if $d_{n-3} \ge \frac{m}{D \log m}$ so we can assume this 
is not the case. This implies   $4n>d(S_{\ge \frac{m}{ D \log m}})  \ge \frac{\rho \mu^{10} m}{2}$  and $d_n \ge \frac{\rho \mu^{10} m}{8}$.   

Moreover, for any vertex 
$v \in S_{\ge n^{1/3}h(n)}-S$, more than  half the neighbours of $v$ (indeed all but 4) are nonadjacent to more than half the neighbours of $n$,
so switching  between graphs for  
which $v$ neighbours  $n$ and those for which it does not, 
we obtain  $\p{vn \not \in E}  \le  \frac{64}{\rho \mu^{10} d(v)} =o\left(\frac{1}{n^{1/3}\sqrt{h(n)}}\right)$. 

So letting $a=|S_{\geq n^{1/3}h(n)}-S|$ it follows that (II)  holds whp  if $a<n^{1/3}\sqrt{h(n)}$. Else, whp 
at least $\frac{a}{2}$ vertices of $S_{\ge n^{1/3}h(n)}-S$ 
are adjacent to $n$.  

For any $v$ in $S_{\ge n^{1/3}h(n)}-S$ and graph $H$ 
such that 
(i) at least $\frac{a}{2}$ vertices of $S_{\geq n^{1/3}h(n)}-S$ 
are adjacent to  $n$, 
and (ii) $\mathrm{dist}(v,n) > 4$,
the number of switchings from $H$  to graphs for which 
(i) holds and for which $vn \not\in E$  but $v$ has a unique neighbour in $S_{\ge n^{1/3}h(n)} \cap N(n)$ is at least $\left(\frac{a}{2}(n^{1/3}h(n)-1)\right)(n^{1/3}h(n)) = \Omega(an^{2/3}h(n)^2)$. For any graph 
where $v$ has only one common neighbour with  $n$, there 
are at most $2m=O(n)$ switches using the edge from $v$ to this common neighbour.
Since $\p{vn \not \in E} = o\left(\frac{1}{n^{1/3}\sqrt{h(n)}}\right)$, it follows that the probability that (i) holds and there is a vertex of  $S_{\geq n^{1/3}h(n)}-S$  at distance at least 4 from $S$  is $o(1)$. Since (i) holds whp, this implies (II) holds whp as well.  
\end{proof}

\subsection{Proof of Theorem~\ref{thm:mainnew1}}
We now have everything we need to prove our main theorem. For the reader's convenience, we recall the precise statement here.
\begin{customthm}{\bf \ref{thm:mainnew1}}
For any $h$ going to infinity with $n$, the following holds. Let $(\cD_{\ell})_{\ell \ge 1}$ be a sequence of feasible degree sequences such that  $n_0(\cD_{\ell})=0$ and $G(\cD_{\ell})$  has a giant component. Then whp  
    the diameter of every  component  is $O(\frac{m \log m^{\neq 2}}{m^{\neq 2}})$  and all  but the largest contain at most one cycle and  have $O(\max\{n^{1/3}h(n), \frac{m \log m^{\neq 2}}{m^{\neq 2}}\})$ vertices. 
\end{customthm}

\begin{proof}
From Theorem~\ref{whenhasgiantthm} we know that $R_{\cD} \ge \rho m^{\ne 2}$.
Recall that $S=S_{\ge\lambda \sqrt{m}}$ is the set of vertices of degree at least $\lambda \sqrt{m}$, 
where $\lambda>0$ was chosen sufficiently small in terms of $\mu$ and $\rho$ (and $\mu$ was chosen sufficiently small in terms of $\rho$). 

We  partition our sequence of degree sequences into three subsequences,
one consisting of those without a $\mu$-center, 
one  consisting of those with a $\mu$-center in which $d(S) \ge \rho \mu^{10} m$ and one consisting of those with a $\mu$-center in which  $d(S) < \rho\mu^{10}  m$.
We prove Theorem \ref{thm:mainnew1} for all three  subsequences which proves it for the whole sequence. 
For degree sequences without  a $\mu$-center, we simply apply  \Cref{thm:mainnew5}.

{\bf Case 1:}  The subsequence has a $\mu$-center and $d(S) <  \rho\mu^{10} m$.

We expose all the edges leaving $S$ and delete the set $Z$ of vertices of degree zero in $G-S$; this determines the degree sequence ${\cal D}'$ of $(G-S)-Z$ on vertex set $V'$. Let $d'_i$ be the degree of $i$ in this degree sequence. We note that conditioned on the set of edges leaving $S$, $Z$ is the set of vertices of $G-S$ all of whose neighbours lie in $S$ and $G[V']$ is a uniformly random simple graph on ${\cal D'}$. Furthermore, if $H$ is the component of $G$ containing $S$, then every component of $G[V']$ is either a component of $G$ or a component 
of $H-S$, so an understanding of $G[V']$ will allow us to complete the proof.

We claim  that $\D'$ satisfies the conditions for the existence of a giant component.
Each entry in $\cD'$ which is not equal to 2 corresponds either 
 to a vertex of degree not equal to  2  in $\cD$ or to a vertex of degree 1 in $G - S - Z$; there are at most $d(S)$ vertices of the latter type. Since every vertex of $S$ 
 had degree not equal to  2 in $\cD$ but does not appear in $\cD'$, we have $m^{\neq 2}-4d(S) \le  m^{\neq 2}({\mathcal D}') \le m^{\neq 2}$.

Recall that $R_{\cD} = \sum_{i > j_{\cD}} d_i > \rho m^{\neq 2}$. Thus to prove our claim, we need 
only show $R_{\mathcal D'}>\frac{\rho m^{\neq 2}({\mathcal D}')}{4}$.
To see this we consider 
separately the case where $d_{j_{\mathcal D}} \ge \rho^{-5}$ and the case where $d_{j_{\mathcal D}} \le \rho^{-5}$.

In the first case we consider the set $U$ of vertices of degree 
at least $\frac{\rho^{-5}}{2}$ in $\cD'$, which form a suffix in our ordering of $\cD'$. Since
$$
\sum_{i > j_{\D} : d_i' > d_i/2} d_i' \geq 
\sum_{i > j_{\D}} d_i 
-4 d(S)
\geq
\frac{\rho m^{\neq 2}}{2}
$$
which sums over a subset of $U$, we must also have $\sum_{i \in U} d_i' \geq \frac{\rho m^{\neq 2}}{2}$.
On the other hand, since $d'_i(d'_i-2)$ is nonnegative unless $d'_i=1$, in which case $d_i'(d_i'-2)=-1$, and since there are at most $2m^{\neq 2} (\cD') \le 2m^{\neq 2}$ vertices of degree 1, we also have
$$\sum_{i \in V'-U} d_i'(d_i'-2) \ge -2m^{\neq 2}\ .$$

Let $i^*$ be the largest $i \in U$ such that $\sum_{i \in U, i > i^*}d_i' \geq \frac{\rho m^{\neq 2}}{4}$. Then $\sum_{i \in V' - U} d_i'(d_i'-2) + \sum_{i \in U, i < i^*}d_i'(d_i'-2) \geq -2m^{\neq 2} + \left(\frac{\rho^{-5}}{2} - 2\right)\sum_{i \in U, i < i^*} d_i' > 0$ since we assume $\rho$ is sufficiently small, so $j_{\cD'} \leq i^*$. Thus, $R_{\cD'} \geq \sum_{i \in U, i > i^*} \geq \frac{\rho m^{\neq 2}}{4}$ as desired.

For the second case, if $d_{j_{\mathcal D}} \le \rho^{-5}$, 
we consider how the sum $R_{\cD}$ changes upon the deletion of $S$. 
We note that since $S$ is the set of highest-degree vertices, our lower bound on $R_\cD$ and our upper bound on the sum of the degree of the vertices of $S$
ensures that for all $i \in S$, we have $i >j_{\cD}$.  By our assumptions for this case, each time we decrease the degree of some $i \le j_\cD$ by one, $d_i(d_i-2)$ decreases by less than $4\rho^{-5}$.
So, for $i \leq j_{\D}$, we have $\deg_{\D'}(i)(\deg_{\D'}(i)-2)$ is at least $d_i(d_i-2) - 4\rho^{-5}\deg_{\D}(i, S)$ where $\deg_{\D}(i, S)$ is the number of edges from $i$ to $S$.

By definition of $j_{\D}$, we know $\sum_{i \leq j_{\D}} d_i(d_i-2) \geq 0$ so
\begin{align*}\sum_{i \le j_\cD} \deg_{\D'}(i)(\deg_{\D'}(i)-2) &\ge \sum_{i \leq j_{\D}} (d_i(d_i-2) - 4\rho^{-5}\deg_{\D}(i,S))\\
&\geq 0- 4\rho^{-5}\sum_{i \in S} d_i\\
&\ge 4\rho^{-4}\mu^{10}m \ge -\rho \mu^9 m\ .\\
\end{align*}

Now let $T=S\cup \{i > j_{\cD}: \deg_{\D'}(i) < d_{j_{\cD}}\}$, and let $U=\{j_\cD+1,\ldots,n\}-T$. Then $|T \setminus S| \le d(S)$ and for $i \in T \setminus S$, we have $d_i \leq \deg_{\D'}(i) + \deg_{\D}(i,S) \leq d_{j_{\D}} + \deg_{\D}(i,S)$. Thus,
\begin{align*} 
\sum_{i \in T} \deg_{\D'}(i) &\leq \sum_{i \in  T} d_i =\sum_{i \in T \setminus S} d_i +d(S) \leq (|T \setminus S|d_{j_\cD}+d(S) )+d(S)\\
&\le 2d(S)\rho^{-5} \le \rho\mu^9 m\ .
\end{align*} 
This implies that since $\deg_{\D'}(i)(\deg_{\D'}(i)-2) \ge -1 \ge -\deg_{\D'}(i)$, we have $\sum_{i \in T} \deg_{\D'}(i)(\deg_{\D'}(i) - 2) \geq -\rho\mu^9 m$.
Since the degree sequence $\cD$ has a center, we also have $d(U)=R_\cD-d(T)- d_{j_\cD} > \frac{99R_\cD}{100}$.  Now, by definition $U$ forms a suffix of $\cD'$,
and each vertex $i$ of $U$ satisfies $\deg_{\D'}(i)-2 \ge 1$.
If $d_{j_{\D'}} \leq \deg_{\D'}(u)$ for all $u \in U$, then we are done. Otherwise, let $U_< \subset U$ be the vertices of $U$ that occur before $j_{\D'}$ in $\D'$. Then $R_{\D'} \geq \deg_{\D'}(U) - \deg_{\D'}(U_<)$. Since $\sum_{i<j_{D'}} d'_i(d'_i-2) \leq 0$, we have 
\begin{align*} \deg_{\D'}(U_<) &\leq \sum_{i \in U_<} \deg_{\D'}(i)(\deg_{\D'}(i)-2) \\
&\leq -\sum_{i \leq j_{\D}} \deg_{\D'}(i)(\deg_{\D'}(i)-2) - \sum_{i \in T} \deg_{\D'}(i)(\deg_{\D'}(i)-2) \\
&\leq 2\rho \mu^9 m\ .
\end{align*}

It follows that $R_{\cD'} \ge d(U)-(2\rho\mu^9 m) \ge  \frac{98R_\cD}{100}$, so again 
$R_{\cD'}>\rho m^{\neq 2}/4 $.  
This proves our claim.

In deleting $S$, we have changed both $m^{\neq 2}$ and $m$ by at most $4 \rho \mu^{10}m$
 so we  have
$m^{\neq 2}({\mathcal D'} ) \ge \frac{\mu^3 m({\mathcal D}')}{2}$.  
If we choose $\lambda$ small enough in  terms of $\mu, \rho/4$, and $\mu_{\rho/4}$ we can  apply Theorem 
\ref{thm:mainnew5}, to obtain that  its components have diameter $O(\log m)$ and all but the largest have size $O(\log m)$. 

By Lemma \ref{lastlem}, any  two vertices  in the same component are  joined by a path $P$  such that the components  of $P-S$ containing an endpoint of $P$ 
are shortest paths  of $G-S$ between their endpoints and the rest of $P$ 
has at most $g(m)=O(\log \log m)$ edges . This implies $P$ has length $O(\log m)$.  It remains to show 
that there is a unique  component of size exceeding $n^{1/3}h(n)$
which is the  one containing $S$. 

We note that by Theorem \ref{whenhasgiantthm}, the largest component 
of $G-S$ contains $\epsilon' m$ edges for some $\epsilon'>0$ and so we need only show that there is no such component in $G-S$ that has no edges 
to $S$. In order to do so, we show that the expected number of edges 
in such a component is $o(m)$ from which it follows that the 
probability there is such a component is $o(1)$.
Thus, we need only show that each vertex  of $G-S$ is in such a component 
with probability $o(1)$. 

Let $\cF_v$ be the collection of graphs $H$ for which $H-S$ contains some component $K$ that has $\epsilon' m$ edges, has no edges to $S$ in $H$, and contains the vertex $v$. For every $H \in \cF_v$, there are  at least $\epsilon' \lambda m^{3/2}$ switchings using an edge $xy$ of $K$ with $x$ no further from $v$ than $y$ and an edge $us$ with $s \in S$. This yields a graph not in $\cF_v$ where the first edge of every shortest path from $S$ to $v$  is either $xs$ or $uv$. For each such graph there are at most $2m$ switchings into $\cF_v$ using the first edge of a shortest path from $S$ to $v$. Thus $\p{v \in K} \leq \frac{2}{\epsilon' \lambda \sqrt{m}} = o(1)$ so we are done.

{\bf Case 2: }  The subsequence has a $\mu$-center and  $d(S) \ge  \rho \mu^{10} m$. 

Note that since the degree sequence has a $\mu$-center, we must have $m^{\neq 2}>\frac{\rho \mu^{10} m}{2}$, and in particular $m^{\neq 2} = \Omega(m)$. 

We set $d^*$ to be $D \log m$ if $d(S-S_{\ge  \frac{m}{ D \log m}}) \ge \frac{\rho \mu^{10} m}{2}$. Otherwise, we know $m \le n \log n$, and  we set $d^*=n^{1/3}h(n) \le m^{1/3}h(n)<m^{3/8}$. By \Cref{notlastlem}, we note that whp every vertex of  $S_{\ge d^*}$ is within distance 
6 of $S$. 

Our next step is to show that whp every vertex $v$ of $V-S_{\ge d^*}$ 
is either within distance $O(\log m)$ of a vertex of $S_{\ge d^*}$ or in a component  containing $O(\log m)$ vertices of degree at least $2$ and hence of diameter $O(\log m)$. Assuming this holds, then since whp every two vertices of $S_{\geq d^*}$ are within distance 6 of $S$ and any two vertices of $S$ are within distance $g(m) = \log \log m$ of one another, it follows that whp there is a component containing all the vertices of $S_{\ge d^*}$ which has diameter 
$O(\log m)$. This proves the diameter bound in Theorem \ref{thm:mainnew1}.

Now, and again later in the proof, for each vertex $v$ in $V-S_{\ge d^*}$, we  carry out an exploration  of  the  type discussed earlier, by building a tree $T$ rooted at $v$, 
a subgraph $H$ of $G$, and the submatching $M_H$  of $\cM_\cD$ corresponding to $H$ in the following way. 
If we add a vertex $w$ with  $2 \le d(w) \le d^*$ to $T$ in the current iteration, then in the next iteration, we find the unmatched halfedge incident to $w$ with the lowest index and expose the halfedge to which it is matched.
If we do not add such a vertex in the current iteration, then in the next iteration we find the lowest-indexed vertex in $T$ that is incident to at least one unmatched halfedge. For the lowest-indexed unmatched halfedge incident to this vertex, we expose the halfedge to which it is matched.

We carry out our exploration
from $v$ until one of the following holds:  \begin{enumerate}[(a)] 
\item we have exposed the whole component containing $v$, 
\item we have added  $D \log m$ vertices of degree at least two to $T$, or  
\item we have added a vertex of 
degree at least $d^*$ to $T$. 
\end{enumerate}

Since each vertex of $V-S_{\ge d^*}$ is adjacent to fewer than half the 
neighbours of any vertex of $S$ (recalling that $S$ consists of vertices with degree at least $\lambda \sqrt{m}$), switching with edges from 
$S$ shows that if vertex $w$ with $2 \leq d(w) \leq d^*$ is added to the tree in the current iteration, the probability that (conditioned on the exploration up to this point) $w$ matches to a vertex
of $S_{\ge d^*}$ in the next iteration is at least $\frac{\rho\mu^{10}}{4}$. Hence for large $D$, the probability that 
(b) occurs is at most $(1-\frac{\rho \mu^{10}}{4})^{D \log m} = o(1/m)$, so whp there is no $v$ for which (b) occurs. 

If (c)  occurs, then $v$ is within distance $D \log m+1$ of a vertex of 
$S_{\ge d^*}$. This shows that whp every vertex  of $V-S_{\ge d^*}$ is either within distance $O(\log m)$ of a vertex of $S_{\ge d^*}$ or in a component  containing $O(\log m)$ vertices of degree at most $2$, as desired. 

It follows that whp every component except the one containing $S$ fails to intersect $S_{\geq d^*}$  and has at most $O(n^{1/3}h(n)\log m)$ edges and hence $o(n)$ vertices and $o(m)$ 
edges. Since whp $G$ contains a component with $\epsilon n$ vertices for some $\epsilon > 0$, then whp the component containing $S$ is the largest component. It remains to show that the other components have size
$O(n^{1/3}h(n))$ and are unicyclic.

Regarding the size bound, since whp each of these components  has $O(\log m)$  vertices of degree at most two,  whp they each 
contain $O(d^*\log m)$ vertices overall. So we need only consider  $d^*=n^{1/3}h(n)$, i.e. the case that $d(S-S_{\ge \frac{m}{D \log m}})<\frac{\rho\mu^{10} m}{2}$. Furthermore, by \Cref{dn-3large}, whp if $d_{n-3}>\frac{m}{D \log m}$ then none of these components contain vertices of degree exceeding $\frac{n^{1/3}}{{\log m}}$ so whp they all contain $O(n^{1/3})$ vertices and we are done.  Thus, in proving the size bound, we can assume  $d_{n-3}<\frac{m}{D \log m}$ and $d(S-S_{\ge\frac{m}{D \log m}})<\frac{\rho\mu^{10} m}{2}$ which imply $m=O(n)$.

We now   consider an  exploration from $v$ in $V-S_{\ge d^*}$ 
in a breadth-first search manner as before: in each iteration, of the vertices which still have unmatched halfedges, we take the vertex from the earliest iteration, and of the  unmatched halfedges at this vertex we take the one with the lowest index and expose its partner. We continue until  
\begin{enumerate}
    \item we have exposed the whole component containing  $v$, 
    \item we have added four vertices of degree exceeding $\frac{n^{1/3}}{\log m}$, 
    \item we add an edge to a vertex of $S_{\ge d^*}$, 
    \item there are more than $D \log m$ vertices of degree at least 2  in the tree, or 
    \item the explored graph $H$ first contains more than one cycle. 
\end{enumerate}
We have already shown that whp neither  (3) or (4)  will occur for any $v \in V-S_{\ge d^*}$  which is in a component disjoint from $S$, so we need only show that whp  (2) does not occur for any $v$ and that every  $v$  for which  (5) occurs is in a component containing a vertex of $S$. 

If $d_{n-3}<\frac{m}{D \log m}$ and $d(S-S_{>\frac{m}{D \log m}})<\frac{\rho\mu^{10} m}{2}$ then as we have already seen,  $m=O(n)$  and $d_n=\Omega(m)$. Because we terminate if (4) occurs, the probability that a vertex $w$ which is added to the tree is not adjacent to $n$ (conditioned on the exploration up to the point at which we add $w$, which includes the neighbours of $w$  added to the tree before $w$) is $O(\frac{1}{d(v)-O(\log m)})$. It follows that the probability (2) occurs is $O((\log m)^4(\frac{\log m}{n^{1/3}})^4)=o(\frac{1}{n})$. Thus whp there is no $v$ for which (2) occurs. 

If (5) occurs but no vertex of the constructed tree $T$ has an edge 
to $S$, let $w$ be the last vertex we explored from and $wy$ be the last edge we explored, so in particular $y \in T$. Also consider the first cycle contained in $H$: let $ab$ be the last edge that completed this cycle, meaning $a, b \in T$ but $ab \notin T$, 
labelled so that $a$ was added to $T$ before $b$.

Note that we have not yet explored from $y$, as we explore all the edges from a vertex consecutively except the one causing it to be added to $T$. This implies that $y$ was added to $T$ after $w$ and is not $a$. We also note that since the exploration of $wy$ causes $H$ to first contain more than one cycle, $y$ was in $T$ before we started exploring from $w$ and has degree at least two. Since $y$ is not adjacent to any vertex of $S$ by assumption and since $\deg(w) \leq d^*$, for every vertex $s$
of $S$, we can switch $sz$ with $wy$ for at least half the neighbours $z$ of 
$s$.  There are $\Omega(m)$ such switches.  

Now consider what happens with the exploration process in the graph $G'$ which results from switching $sz$ with $wy$; let $T'$ be the corresponding tree in $G'$ constructed when exploring from $v$. Observe that such a switch does not change the graph constructed up to the iteration at which we revealed the halfedge at $w$ pairing with a halfedge at $y$.  Furthermore, the tree $T'$ up to revealing $wy$ coincides with $T$.  
So in $G'$, either we stop exploring because we reveal $wz$ for $z \in S$ (causing (3) to occur) and $ys$ is the only other edge in $G'$ from $T'$ to $S$,
or $y$ is the first vertex in $G'$ we explore which has an edge to $S$ and it has only one such edge. This means that in $G'$, there at most 2 choices for such an edge $ys$. 
To switch in the other direction we must use one of these two choices and one of the $O(d^* \log m)$  edges incident to the first $D \log m$ 
vertices  of degree at least two added to the tree in our exploration from $v$. 

After carrying out the switch from $G$ to $G'$, the edge $ab$ is (i) still an edge between two of the $\leq$ $D \log m$ vertices of degree at least 2 in the tree $T'$, and (ii) not in the tree $T'$. 

If $a \neq w$ or  $z \not \in S$, then $a$ is still adjacent to none of $S$. In this case  we consider  switchings  using  $ab$  from this new graph $G'$ to a newer graph $G''$ in which $a$ is adjacent to a unique vertex of $S$. There are $\Omega(m)$ such switches. Now, $a$ is the last vertex of the tree $T''$ that we explore from and has a unique edge to $S$, and $b$ is one of the first $D \log m$ vertices of degree at least two we explore. 

Else, if $a = w$ and $z \in S$, then $b$ is neither $w$ nor $y$ and so is adjacent to none of $S$. In this case, we consider  switchings  on $ab$ from this new graph $G'$ to a newer graph $G''$ in which $b$ is adjacent to a  unique vertex of $S$. As before, there are $\Omega(m)$ such switches. Again $a$ is the last vertex of the tree $T''$ that we 
explore from.  Now,  $b$ is one of the 
$D \log m$ vertices of degree at least two in the tree   we create. 
Furthermore, there are at most four edges from this tree to $S$. 

So, in either case, there  are  $O(d^* \log m)$ choices for switching in the other direction from $G'$ to a graph in which no vertex of the explored tree has an edge to $S$. Since $(\frac{d^* \log m}{m})^2 = o(m^{-1})$, whp
there is no $v$ for which (5) occurs and $v$ is not in the component containing $S$. 
\end{proof}

\section{Acknowledgments}
This work began while the authors were in residence at the Simons--Laufer Mathematical Sciences Institute during the Spring 2025 semester, supported by NSF grant DMS-1928930. 
LAB is supported by an NSERC Discovery Grant and by the Canada Research Chairs program. 
BR is supported by NSTC Grant 112-2115-M-001 -013 -MY3.

\bibliographystyle{abbrv}
\bibliography{references}
                
\appendix

\end{document}